\renewcommand{\theequation}{\thesection.\arabic{equation}}
\def\eqnarray{%
\stepcounter{equation}%
\let\@currentlabel=\theequation
\global\@eqnswtrue
\global\@eqcnt\z@
\tabskip\@centering
\let\\=\@eqncr
$$\halign to \displaywidth\bgroup\@eqnsel\hskip\@centering
$\displaystyle\tabskip\z@{##}$&\global\@eqcnt\@ne
\hfil$\displaystyle{{}##{}}$\hfil
&\global\@eqcnt\tw@$\displaystyle\tabskip\z@{##}$\hfil
\tabskip\@centering&\llap{##}\tabskip\z@\cr}
\def\bbbz{{\mathchoice {\hbox{$\sf\textstyle Z\kern-0.4em Z$}}
{\hbox{$\sf\textstyle Z\kern-0.4em Z$}}
{\hbox{$\sf\scriptstyle Z\kern-0.3em Z$}}
{\hbox{$\sf\scriptscriptstyle Z\kern-0.2em Z$}}}}
\def\bbbq{{\mathchoice {\setbox0=\hbox{$\displaystyle\rm Q$}\hbox{\raise
0.15\ht0\hbox to0pt{\kern0.4\wd0\vrule height0.8\ht0\hss}\box0}}
{\setbox0=\hbox{$\textstyle\rm Q$}\hbox{\raise
0.15\ht0\hbox to0pt{\kern0.4\wd0\vrule height0.8\ht0\hss}\box0}}
{\setbox0=\hbox{$\scriptstyle\rm Q$}\hbox{\raise
0.15\ht0\hbox to0pt{\kern0.4\wd0\vrule height0.7\ht0\hss}\box0}}
{\setbox0=\hbox{$\scriptscriptstyle\rm Q$}\hbox{\raise
0.15\ht0\hbox to0pt{\kern0.4\wd0\vrule height0.7\ht0\hss}\box0}}}}
\def\bbbc{{\mathchoice {\setbox0=\hbox{$\displaystyle \rm C$}\hbox{\raise
0.06\ht0\hbox to0pt{\kern0.4\wd0\vrule height0.9\ht0\hss}\box0}}
{\setbox0=\hbox{$\textstyle\rm C$}\hbox{\raise
0.06\ht0\hbox to0pt{\kern0.4\wd0\vrule height0.9\ht0\hss}\box0}}
{\setbox0=\hbox{$\scriptstyle\rm C$}\hbox{\raise
0.06\ht0\hbox to0pt{\kern0.4\wd0\vrule height0.8\ht0\hss}\box0}}
{\setbox0=\hbox{$\scriptscriptstyle\rm C$}\hbox{\raise
0.06\ht0\hbox to0pt{\kern0.4\wd0\vrule height0.8\ht0\hss}\box0}}}}
  \renewcommand{\theequation}{%
 \thesection.\arabic{equation}}
\newtheorem{theorem}{Theorem}[section]
\newtheorem{corollary}[theorem]{Corollary}
\newtheorem{proposition}[theorem]{Proposition}
\newtheorem{remark}[theorem]{Remark}
\newsavebox{\toy}
\savebox{\toy}{\framebox[0.65em]{\rule{0cm}{1ex}}}
\newcommand{\QED}{\usebox{\toy}}
\def\nlni{\par\ifvmode\removelastskip\fi\vskip\baselineskip\noindent}
\newenvironment{proof}{\nlni\begingroup\it Proof.\rm}{
\endgroup\vskip\baselineskip}
\begin{document}
\setlength{\baselineskip}{15pt}
\title{
A bijection theorem 
for domino tiling with diagonal impurities
}
\author{
Fumihiko Nakano
\thanks{Faculty of Science, 
Department of Mathematics and Information Science,
Kochi University,
2-5-1, Akebonomachi, Kochi, 780-8520, Japan.
e-mail : 
nakano@math.kochi-u.ac.jp}
\and
Taizo Sadahiro
\thanks{
Faculty of Administration, 
Prefectural University of Kumamoto, 
Tsukide 3-1-100, Kumamoto, 862-8502, Japan.
e-mail : sadahiro@pu-kumamoto.ac.jp}
}
\maketitle
\begin{abstract}
We consider 
the dimer problem on a non-bipartite graph
$G$, 
where there are two types of dimers 
one of which we regard impurities. 
Results of simulations using Markov chain 
seem to indicate that impurities are tend to distribute on the boundary, which we set as a conjecture. 
We first show that 
there is a bijection between the set of dimer coverings on 
$G$
and the set of spanning forests on two graphs which are made from 
$G$,  
with configuration of impurities 
satisfying a pairing condition. 
This bijection can be regarded as a extension of 
the Temperley bijection.
We consider local move consisting of two operations, 
and by using the bijection mentioned above, 
we prove local move connectedness. 
We further 
obtained some bound of the number of dimer coverings and the probability finding an impurity at given edge, 
by extending the argument in \cite{NS}. 
\end{abstract}

Mathematics Subject Classification (2000): 
60C05, 
82B20.  

Key Words : dimer model, impurity, Temperly bijection

\section{Introduction}
Let 
$G = (V(G), E(G))$
be a graph. 
We say that a subset 
$M$
of 
$E(G)$
is a dimer covering of 
$G$ 
whenever it satifies the following condition : 
``for any 
$x \in V(G)$ 
we can uniquely find 
$e \in M$
with 
$x \in e$" ; 
in other words, 
any vertex in 
$G$
is covered by an edge 
$e \in M$
and this edge 
$e$
is unique. 
Each element 
$e \in M$
is called the dimer in 
$M$.  
If 
$G$
is bipartite, many beautiful results are known 
(e.g., \cite{K} and references therein), 
while much less seems to be known for non-bipartite case, one of the difficulty may be that there are no appropriate notion of height function \cite{T} so that we may need an alternative to study the global structure.

In this paper, 
we consider two kinds 
(we denote by 
$G^{(m,n)}$, $G^{(k)}$ 
to be introduced below) 
of graphs both of which are finite subgraphs of 
${\cal G}:= R({\bf Z}^2)$. 
$R({\bf Z}^2)$ 
is the radial graph of 
${\bf Z}^2$ 
defined by (Figure \ref{fig:1-1}(1))
\begin{eqnarray*}
{\cal V}
&:=&
V({\cal G})
=
{\bf Z}^2 \cup 
\left(
{\bf Z}^2 + (\frac 12, \frac 12)
\right)
\\
{\cal E}
&:=&
E({\cal G})
=
\left\{
({\bf x}, {\bf y}) 
\, : \, 
{\bf x}, {\bf y} \in {\cal V}, 
| {\bf x} - {\bf y} | = \frac {\sqrt{2}}{2}, 1
\right\}.
\end{eqnarray*}

\begin{figure}[H]
 \begin{center}
  \begin{pspicture}(10,3)(-5,-17)
  \psset{unit=1.}
   \psline[linewidth=.03,arrowsize=.2]{->}(-3,0)(5,0)
   \psline[linewidth=.03,arrowsize=.2]{->}(0,-3)(0,3)
   \psclip{\pspolygon[linewidth=0,linecolor=white](-2.3,-2.3)(4.3,-2.3)(4.3,2.3)(-2.3,2.3)}
   \multiput(-12,-6)(1,0){18}{
   \psline[linewidth=0.05,linestyle=dotted](0,0)(10,10)
   }
   \multiput(14,-6)(-1,0){18}{
   \psline[linewidth=0.05,linestyle=dotted](0,0)(-10,10)
   }
   \multiput(-4,-4)(0,1){8}{\psline(0,0)(12,0)}
   \multiput(-4,-4)(1,0){12}{\psline(0,0)(0,10)}
   \multiput(0,-4)(0,2){4}{
   \multiput(-4,0)(2,0){6}{
   \pscircle[fillstyle=solid,fillcolor=white](0,0){.1}
   \pscircle[fillstyle=solid,fillcolor=white](1,1){.1}
   \pscircle[fillstyle=solid,fillcolor=black](1,0){.1}
   \pscircle[fillstyle=solid,fillcolor=black](0,1){.1}
   \psdot(.5,.5)\psdot(1.5,.5)\psdot(.5,1.5)\psdot(1.5,1.5)
   }
   }
  \put(-3,2.3){$(1)$}
   \endpsclip

   \put(0,-7){
      \psline[linewidth=.03,arrowsize=.2]{->}(-3,0)(5,0)
   \psline[linewidth=.03,arrowsize=.2]{->}(0,-3)(0,3)
   \psclip{\pspolygon[linewidth=0,linecolor=white](-2.3,-2.3)(4.3,-2.3)(4.3,2.3)(-2.3,2.3)}
   \multiput(-4,-4)(0,1){8}{\psline(0,0)(12,0)}
   \multiput(-4,-4)(1,0){12}{\psline(0,0)(0,10)}
   \multiput(0,-4)(0,2){4}{
   \multiput(-4,0)(2,0){6}{
   \pscircle[fillstyle=solid,fillcolor=white](0,0){.1}
   \pscircle[fillstyle=solid,fillcolor=white](1,1){.1}
   \pscircle[fillstyle=solid,fillcolor=black](1,0){.1}
   \pscircle[fillstyle=solid,fillcolor=black](0,1){.1}
   }
   }
  \put(-3,2.3){$(2)$}
   \endpsclip
   }

   \put(0,-14){
   \psline[linewidth=.03,arrowsize=.2]{->}(-3,0)(5,0)
   \psline[linewidth=.03,arrowsize=.2]{->}(0,-3)(0,3)
   \psclip{\pspolygon[linewidth=0,linecolor=white](-2.3,-2.3)(4.3,-2.3)(4.3,2.3)(-2.3,2.3)}
   \multiput(-12,-6)(1,0){18}{
   \psline[linewidth=0.05,linestyle=dotted](0,0)(10,10)
   }
   \multiput(14,-6)(-1,0){18}{
   \psline[linewidth=0.05,linestyle=dotted](0,0)(-10,10)
   }
   \multiput(0,-4)(0,2){4}{
   \multiput(-4,0)(2,0){6}{
   \pscircle[fillstyle=solid,fillcolor=white](0,0){.1}
   \pscircle[fillstyle=solid,fillcolor=white](1,1){.1}
   \pscircle[fillstyle=solid,fillcolor=black](1,0){.1}
   \pscircle[fillstyle=solid,fillcolor=black](0,1){.1}
   \psdot(.5,.5)\psdot(1.5,.5)\psdot(.5,1.5)\psdot(1.5,1.5)
   }
   }
  \put(-3,2.3){$(3)$}
   \endpsclip
   }
  \end{pspicture}
  \caption{
(1) The graph ${\cal G}$, 
(2) Square lattice ${\cal G}_0$, 
(3) Square lattice ${\cal G}'_0$
  }{\label{fig:1-1}}
 \end{center}
\end{figure}

\noindent
The set 
${\cal V}$ 
of vertices of 
${\cal G}$
is given by 
${\cal V} = {\cal V}_1 \cup {\cal V}_2 \cup {\cal V}_3$ 
where  
\begin{eqnarray*}
{\cal V}_1
& := & 
\left\{
(x,y) \in {\bf Z}^2 
\, : \, 
x - y \in 2{\bf Z}
\right\}
\\
{\cal V}_2
& := & 
\left\{
(x,y) \in {\bf Z}^2 
\, : \, 
x - y \in 2{\bf Z}+1
\right\}
\\
{\cal V}_3
& := &
{\bf Z}+\left(\frac 12, \frac 12 \right).
\end{eqnarray*}
In
Figure \ref{fig:1-1}(1), 
white circles are vertices in 
${\cal V}_1$, 
and  
black circles (resp. black dots)
are those of 
${\cal V}_2$
(resp. ${\cal V}_3$).
${\cal V}_1 \cup {\cal V}_2 =: V({\cal G}_0)$
is the vertex set of a square (and hence bipartite) lattice
${\cal G}_0$
(Figure \ref{fig:1-1}(2)),
and 
${\cal V}_3$
is the set of points centered on each faces of 
${\cal G}_0$. 
Bipartiteness of 
${\cal G}_0$ 
means 
${\cal V}_1$, ${\cal V}_2$ 
satisfies the following condition : 
for 
$x,y \in V({\cal G}_0)$, 
$(x,y) \in E({\cal G}_0)$
implies 
$x \in {\cal V}_1$, $y \in {\cal V}_2$
or
$x \in {\cal V}_2$, $y \in {\cal V}_1$. 
Unless stated otherwise, 
we do not consider orientation 
and identify 
$e=(x,y)$ 
with 
$\overline{e} = (y,x)$. 
The edge set
${\cal E}=E({\cal G})$
of 
${\cal G}$ 
has the following decomposition. 
\begin{eqnarray*}
{\cal E} &=& {\cal E}_1 \cup {\cal E}_2
\\
{\cal E}_1
& := &
\{ (x,y) \in E({\cal G}) 
\; : \;
x \in {\cal V}_1 \cup {\cal V}_2, 
\;
y \in {\cal V}_3
\}
\\
{\cal E}_2
& := & 
\{ (x,y) \in E({\cal G})
\; : \; 
x, y \in {\cal V}_1 \cup {\cal V}_2
\}.
\end{eqnarray*}
In 
Figure \ref{fig:1-1} (1), 
dotted lines are edges in 
${\cal E}_1$, 
while solid ones are those in  
${\cal E}_2$.

${\cal G}$
can also be regarded as a graph obtained by adding a diagonal edges (those in 
${\cal E}_2$) in alternate directions to a square lattice
${\cal G}'_0$
(Figure \ref{fig:1-1} (3)),
whose vertex and edge sets are given by 
$V({\cal G}'_0) = V({\cal G})$, 
$E({\cal G}'_0)={\cal E}_1$.
Let 
$G(\subset {\cal G})$ 
be a finite subgraph of 
${\cal G}$
and 
$M$ 
be a dimer covering on 
$G$.
A dimer 
$e\in M \cap {\cal E}_1$
is also a dimer of a finite subgraph of the square lattice 
$G \cap {\cal G}'_0$. 
In this respect, 
it may be natural to call the dimers
$e \in M \cap {\cal E}_2$
impurities. 
For instance 
in  Figure \ref{fig:1-2}, 
impurities are those on vertical or horizontal edges. 
\\

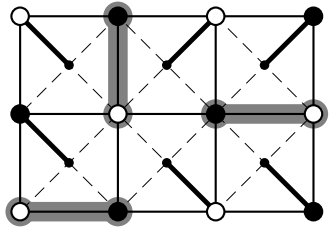
\begin{figure}[H]
 \begin{center}
  \begin{pspicture}(5,3)
  \psset{unit=1.3}
   \psline[linewidth=.2,dotsize=.3,linecolor=gray]{*-*}(0,0)(1,0)
   \psline[linewidth=.05,dotsize=.3]{-}(0,1)(.5,0.5)
   \psline[linewidth=.05,dotsize=.3]{-}(0,2)(.5,1.5)
   \psline[linewidth=.2,dotsize=.3,linecolor=gray]{*-*}(1,2)(1,1)
   \psline[linewidth=.05,dotsize=.3]{-}(2,2)(1.5,1.5)
   \psline[linewidth=.05,dotsize=.3]{-}(3,2)(2.5,1.5)
   \psline[linewidth=.2,dotsize=.3,linecolor=gray]{*-*}(3,1)(2,1)
   \psline[linewidth=.05,dotsize=.3]{-}(1.5,.5)(2,0)
   \psline[linewidth=.05,dotsize=.3]{-}(2.5,.5)(3,0)
  \psline(0,0)(3,0)(3,2)(0,2)(0,0)
  \psline(0,1)(3,1)
  \psline(1,0)(1,2)
  \psline(2,0)(2,2)
  \psline[linewidth=0.01,linestyle=dashed](0,0)(2,2)(3,1)(2,0)(0,2)
  \psline[linewidth=0.01,linestyle=dashed](3,0)(1,2)(0,1)(1,0)(3,2)
  \pscircle[fillstyle=solid,fillcolor=white](0,0){.1}
  \pscircle[fillstyle=solid,fillcolor=white](2,0){.1}
  \pscircle[fillstyle=solid,fillcolor=white](1,1){.1}
  \pscircle[fillstyle=solid,fillcolor=white](3,1){.1}
  \pscircle[fillstyle=solid,fillcolor=white](0,2){.1}
  \pscircle[fillstyle=solid,fillcolor=white](2,2){.1}
  \pscircle[fillstyle=solid,fillcolor=black](1,0){.1}
  \pscircle[fillstyle=solid,fillcolor=black](3,0){.1}
  \pscircle[fillstyle=solid,fillcolor=black](0,1){.1}
  \pscircle[fillstyle=solid,fillcolor=black](2,1){.1}
  \pscircle[fillstyle=solid,fillcolor=black](1,2){.1}
  \pscircle[fillstyle=solid,fillcolor=black](3,2){.1}
  \multiput(0,0)(0,1){2}{
  \multiput(0,0)(1,0){3}{\psdot(.5,.5)}
  }
  \end{pspicture}
  \caption{
  Example of finite subgraph of 
  ${\cal G}$ 
  and dimer covering on that. 
  Impurities are those on vertical or horizontal edges.
  }\label{fig:1-2}
 \end{center}
\end{figure}

$e \in M \cap {\cal E}_1$ 
connects vertices between 
$V(G) \cap ({\cal V}_1 \cup {\cal V}_2)$ 
and 
$V(G) \cap {\cal V}_3$ 
while 
$e \in M \cap {\cal E}_2$ 
connects those of 
$V(G) \cap ({\cal V}_1 \cup {\cal V}_2)$. 
Therefore 
the number of impurities is constant and given by 
\begin{equation}
\sharp \{ \mbox{  impurities } \}
=
\frac { |V(G)\cap{\cal V}_1| + |V(G)\cap{\cal V}_2| - |V(G)\cap{\cal V}_3|}{2}
\label{impurities}
\end{equation}
\begin{remark}
If the location of impurities is fixed, 
then this problem becomes a special case of the dimer-monomer problem where many results are known. 
(e.g.,\cite{HL})
\end{remark}
We next 
introduce the two classes of finite subgraphs of 
${\cal G}$ 
studied in this paper. 
\\

\noindent
(1)
$G^{(m,n)}$
\\
$G^{(m,n)}(\subset {\cal G})$ 
is the rectangle which has 
$m$-blocks in the horizontal direction and  
$n$-blocks in the vertical direction.
Figure \ref{fig:1-2}
shows the case of 
$(m,n) = (3,2)$.
Substituting 
$|V(G) \cap {\cal V}_1| + |V(G) \cap {\cal V}_2| = (m+1)(n+1)$,
$|V(G) \cap {\cal V}_3| = mn$ 
to the equation
(\ref{impurities}), 
we see that the number of impurities is equal to 
$(m+n+1)/2$ 
and the parity of 
$m$ 
and 
$n$
must be opposite. \\

\noindent
(2)
$G^{(k)}$\\
$G^{(k)}$
which is made by the following procedure : 
(i) composing freely the ``basic block" in Figure \ref{fig:1-3} (1), and then 
(ii) attaching 
$(2k-1)$ 
vertices of 
${\cal V}_1$ 
which we call 
``terminals" to the boundary, so that the dimer covering of 
$G^{(k)}$ 
has 
$k$
impurities. 
An example of 
$G^{(2)}$ 
is given in Figure \ref{fig:1-3}(2). \\
%


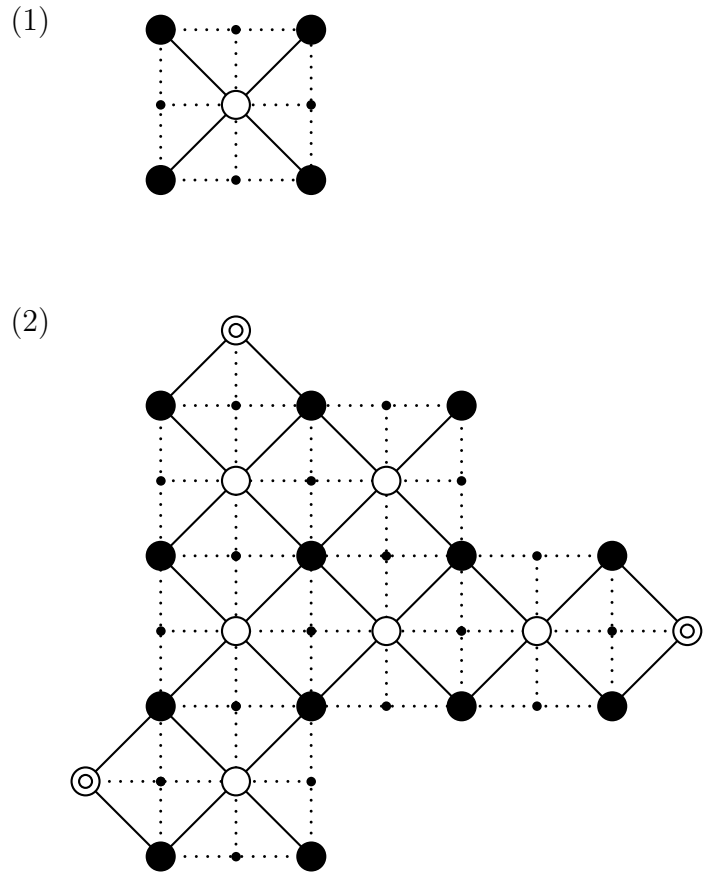
\begin{figure}[H]
 \begin{center}
  \begin{pspicture}(6,10)(0,-2.5)
   \psset{unit=2}

   \put(.5,4){
   \put(-1,1){$(1)$}
   \psline(0,0)(1,1)\psline(1,0)(0,1)
   \psline[linewidth=.02,linestyle=dotted](0,0)(1,0)(1,1)(0,1)(0,0)
   \psline[linewidth=.02,linestyle=dotted](0,0.5)(1,0.5)
   \psline[linewidth=.02,linestyle=dotted](0.5,0)(0.5,1)
   \psdot(.5,0)\psdot(1,.5)\psdot(.5,1)\psdot(0,.5)
   \pscircle[fillstyle=solid,fillcolor=black](0,0){.1}
   \pscircle[fillstyle=solid,fillcolor=black](1,0){.1}
   \pscircle[fillstyle=solid,fillcolor=black](1,1){.1}
   \pscircle[fillstyle=solid,fillcolor=black](0,1){.1}
   \pscircle[fillstyle=solid,fillcolor=white](0.5,.5){.1}
   }

   \put(-.5,3){$(2)$}
   \put(.5,-.5){
   \psline(0,0)(2,2)(3,1)(3.5,1.5)(3,2)(2,1)(0,3)(.5,3.5)(1,3)(0,2)(1,1)}
   \psline(0,0)(2.5,2.5)
   \psline(1.5,2.5)(2.5,1.5)
   \psline(.5,.5)(1.5,-.5)
   \psline(0,0)(.5,-.5)
   \put(.5,-.5){
   \multiput(0,0)(0,1){4}{\pscircle[fillstyle=solid,fillcolor=black](0,0){.1}}
   \multiput(1,0)(0,1){4}{\pscircle[fillstyle=solid,fillcolor=black](0,0){.1}}
   \multiput(2,1)(0,1){3}{\pscircle[fillstyle=solid,fillcolor=black](0,0){.1}}
   \multiput(3,1)(0,1){2}{\pscircle[fillstyle=solid,fillcolor=black](0,0){.1}}
   \psset{linestyle=dotted}
   {\psset{linewidth=.02}
   \psline(0,0)(0,3) \psline(1,0)(1,3) \psline(2,1)(2,3)
   \psline(3,1)(3,2)
   \psline(0,0)(1,0)\psline(0,1)(3,1)\psline(0,2)(3,2)\psline(0,3)(2,3)
   }
   }
   {
   \psset{linestyle=dotted}
   {
   \psset{linewidth=.02}
   \psline(0,0)(1.5,0)\psline(1,-.5)(1,3)\psline(0.5,1)(4,1)\psline(.5,2)(2.5,2)\psline(2,2.5)(2,.5)
   \psline(3,.5)(3,1.5)
   }
   \psset{linestyle=solid}
   \pscircle[fillstyle=solid,fillcolor=white](0,0){.1}
   \pscircle[fillstyle=solid,fillcolor=white](0,0){.05}
   \pscircle[fillstyle=solid,fillcolor=white](1,3){.1}
   \pscircle[fillstyle=solid,fillcolor=white](1,3){.05}
   \pscircle[fillstyle=solid,fillcolor=white](1,0){.1}
   \pscircle[fillstyle=solid,fillcolor=white](1,1){.1}
   \pscircle[fillstyle=solid,fillcolor=white](1,2){.1}
   \pscircle[fillstyle=solid,fillcolor=white](2,1){.1}
   \pscircle[fillstyle=solid,fillcolor=white](2,2){.1}
   \pscircle[fillstyle=solid,fillcolor=white](3,1){.1}
   \pscircle[fillstyle=solid,fillcolor=white](4,1){.1}
   \pscircle[fillstyle=solid,fillcolor=white](4,1){.05}
   }
   \psdot(0.5,0)\multiput(0,0)(0,1){3}{\psdot(1.,0.5)}
   \multiput(-1,0)(1,0){4}{\psdot(1.5,1)}
   \multiput(-1,1)(1,0){3}{\psdot(1.5,1)}
   \multiput(-1,-1)(1,0){2}{\psdot(1.5,1)}
   \multiput(-.5,-1.5)(0,1){3}{\psdot(1.5,1)}
   \multiput(.5,-.5)(0,1){3}{\psdot(1.5,1)}
   \multiput(1.5,-.5)(0,1){2}{\psdot(1.5,1)}
  \end{pspicture}
  \caption{(1) the basic block
(2) an example of 
$G^{(2)}$.
  \label{fig:1-3}
The double circles are the terminals.}
 \end{center}
\end{figure}

%
We consider two kinds of local move operation : 
square-move(s-move) and triangular-move(t-move), 
which transform a dimer covering to another one. 
(Figure \ref{fig:1-4})\\

\begin{figure}[H]
 \begin{center}
  \begin{pspicture}(13,10)
   \psset{unit=1.5}
   \put(0,0.0){
   \put(-.5,2.5){$(2)$}
   \pspolygon[linewidth=0,fillstyle=solid,fillcolor=yellow](2,1)(3,1)(3,2)
   \put(0,1){\psline[linewidth=.06](0,0)(.5,-.5)}
   \put(0,2){\psline[linewidth=.06](0,0)(.5,-.5)}
   \put(2,0){\psline[linewidth=.06](0,0)(-.5,.5)}
   \put(2,2){\psline[linewidth=.06](0,0)(-.5,-.5)}
   \put(3,0){\psline[linewidth=.06](0,0)(-.5,.5)}
   \put(3,2){\psline[linewidth=.06](0,0)(-.5,-.5)}
   \psline[linewidth=.12,linecolor=gray]{*-*}(0,0)(1,0)
   \psline[linewidth=.12,linecolor=gray]{*-*}(1,1)(1,2)
   \psline[linewidth=.12,linecolor=gray]{*-*}(2,1)(3,1)
   \psline(0,0)(3,0)(3,2)(0,2)(0,0)
  \psline(0,1)(3,1)
  \psline(1,0)(1,2)
  \psline(2,0)(2,2)
  \psline[linewidth=0.03,linestyle=dotted](0,0)(2,2)(3,1)(2,0)(0,2)
  \psline[linewidth=0.03,linestyle=dotted](3,0)(1,2)(0,1)(1,0)(3,2)
  \pscircle[fillstyle=solid,fillcolor=white](0,0){.1}
  \pscircle[fillstyle=solid,fillcolor=white](2,0){.1}
  \pscircle[fillstyle=solid,fillcolor=white](1,1){.1}
  \pscircle[fillstyle=solid,fillcolor=white](3,1){.1}
  \pscircle[fillstyle=solid,fillcolor=white](0,2){.1}
  \pscircle[fillstyle=solid,fillcolor=white](2,2){.1}
  \pscircle[fillstyle=solid,fillcolor=black](1,0){.1}
  \pscircle[fillstyle=solid,fillcolor=black](3,0){.1}
  \pscircle[fillstyle=solid,fillcolor=black](0,1){.1}
  \pscircle[fillstyle=solid,fillcolor=black](2,1){.1}
  \pscircle[fillstyle=solid,fillcolor=black](1,2){.1}
  \pscircle[fillstyle=solid,fillcolor=black](3,2){.1}
  \multiput(0,0)(0,1){2}{
  \multiput(0,0)(1,0){3}{\psdot(.5,.5)}
  }
   \psline[linewidth=0.1]{->}(4,1)(5,1)
   \put(3.5,1.2){triangle-move}
   }

   \put(6.0,0){
   \pspolygon[linewidth=0,fillstyle=solid,fillcolor=yellow](2,1)(3,1)(3,2)
   \put(0,1){\psline[linewidth=.06](0,0)(.5,-.5)}
   \put(0,2){\psline[linewidth=.06](0,0)(.5,-.5)}
   \put(2,0){\psline[linewidth=.06](0,0)(-.5,.5)}
   \put(2,2){\psline[linewidth=.06](0,0)(-.5,-.5)}
   \put(3,0){\psline[linewidth=.06](0,0)(-.5,.5)}
   \put(2.5,1.5){\psline[linewidth=.06](0,0)(-.5,-.5)}
   \psline[linewidth=.12,linecolor=gray]{*-*}(0,0)(1,0)
   \psline[linewidth=.12,linecolor=gray]{*-*}(1,1)(1,2)
   \psline[linewidth=.12,linecolor=gray]{*-*}(3,2)(3,1)
   \psline(0,0)(3,0)(3,2)(0,2)(0,0)
  \psline(0,1)(3,1)
  \psline(1,0)(1,2)
  \psline(2,0)(2,2)
  \psline[linewidth=0.03,linestyle=dotted](0,0)(2,2)(3,1)(2,0)(0,2)
  \psline[linewidth=0.03,linestyle=dotted](3,0)(1,2)(0,1)(1,0)(3,2)
  \pscircle[fillstyle=solid,fillcolor=white](0,0){.1}
  \pscircle[fillstyle=solid,fillcolor=white](2,0){.1}
  \pscircle[fillstyle=solid,fillcolor=white](1,1){.1}
  \pscircle[fillstyle=solid,fillcolor=white](3,1){.1}
  \pscircle[fillstyle=solid,fillcolor=white](0,2){.1}
  \pscircle[fillstyle=solid,fillcolor=white](2,2){.1}
  \pscircle[fillstyle=solid,fillcolor=black](1,0){.1}
  \pscircle[fillstyle=solid,fillcolor=black](3,0){.1}
  \pscircle[fillstyle=solid,fillcolor=black](0,1){.1}
  \pscircle[fillstyle=solid,fillcolor=black](2,1){.1}
  \pscircle[fillstyle=solid,fillcolor=black](1,2){.1}
  \pscircle[fillstyle=solid,fillcolor=black](3,2){.1}
  \multiput(0,0)(0,1){2}{
  \multiput(0,0)(1,0){3}{\psdot(.5,.5)}
  }
   }

   \put(0,4.0){
   \put(-.5,2.5){$(1)$}
   \pspolygon[linewidth=0,fillstyle=solid,fillcolor=cyan](2,1)(2.5,1.5)(2,2)(1.5,1.5)
   \put(0,1){\psline[linewidth=.06](0,0)(.5,-.5)}
   \put(0,2){\psline[linewidth=.06](0,0)(.5,-.5)}
   \put(2,0){\psline[linewidth=.06](0,0)(-.5,.5)}
   \put(2,2){\psline[linewidth=.06](0,0)(-.5,-.5)}
   \put(3,0){\psline[linewidth=.06](0,0)(-.5,.5)}
   \put(2,1){\psline[linewidth=.06](0,0)(.5,.5)}
   \psline[linewidth=.12,linecolor=gray]{*-*}(0,0)(1,0)
   \psline[linewidth=.12,linecolor=gray]{*-*}(1,1)(1,2)
   \psline[linewidth=.12,linecolor=gray]{*-*}(3,2)(3,1)
   \psline(0,0)(3,0)(3,2)(0,2)(0,0)
  \psline(0,1)(3,1)
  \psline(1,0)(1,2)
  \psline(2,0)(2,2)
  \psline[linewidth=0.03,linestyle=dotted](0,0)(2,2)(3,1)(2,0)(0,2)
  \psline[linewidth=0.03,linestyle=dotted](3,0)(1,2)(0,1)(1,0)(3,2)
  \pscircle[fillstyle=solid,fillcolor=white](0,0){.1}
  \pscircle[fillstyle=solid,fillcolor=white](2,0){.1}
  \pscircle[fillstyle=solid,fillcolor=white](1,1){.1}
  \pscircle[fillstyle=solid,fillcolor=white](3,1){.1}
  \pscircle[fillstyle=solid,fillcolor=white](0,2){.1}
  \pscircle[fillstyle=solid,fillcolor=white](2,2){.1}
  \pscircle[fillstyle=solid,fillcolor=black](1,0){.1}
  \pscircle[fillstyle=solid,fillcolor=black](3,0){.1}
  \pscircle[fillstyle=solid,fillcolor=black](0,1){.1}
  \pscircle[fillstyle=solid,fillcolor=black](2,1){.1}
  \pscircle[fillstyle=solid,fillcolor=black](1,2){.1}
  \pscircle[fillstyle=solid,fillcolor=black](3,2){.1}
  \multiput(0,0)(0,1){2}{
  \multiput(0,0)(1,0){3}{\psdot(.5,.5)}
  }
   \psline[linewidth=0.1]{->}(4,1)(5,1)
   \put(3.5,1.2){square-move}
   }

   \put(6,4){
   \pspolygon[linewidth=0,fillstyle=solid,fillcolor=cyan](2,1)(2.5,1.5)(2,2)(1.5,1.5)
   \put(0,1){\psline[linewidth=.06](0,0)(.5,-.5)}
   \put(0,2){\psline[linewidth=.06](0,0)(.5,-.5)}
   \put(2,0){\psline[linewidth=.06](0,0)(-.5,.5)}
   \put(2,2){\psline[linewidth=.06](0,0)(.5,-.5)}
   \put(3,0){\psline[linewidth=.06](0,0)(-.5,.5)}
   \put(2,1){\psline[linewidth=.06](0,0)(-.5,.5)}
   \psline[linewidth=.12,linecolor=gray]{*-*}(0,0)(1,0)
   \psline[linewidth=.12,linecolor=gray]{*-*}(1,1)(1,2)
   \psline[linewidth=.12,linecolor=gray]{*-*}(3,2)(3,1)
   \psline(0,0)(3,0)(3,2)(0,2)(0,0)
  \psline(0,1)(3,1)
  \psline(1,0)(1,2)
  \psline(2,0)(2,2)
  \psline[linewidth=0.03,linestyle=dotted](0,0)(2,2)(3,1)(2,0)(0,2)
  \psline[linewidth=0.03,linestyle=dotted](3,0)(1,2)(0,1)(1,0)(3,2)
  \pscircle[fillstyle=solid,fillcolor=white](0,0){.1}
  \pscircle[fillstyle=solid,fillcolor=white](2,0){.1}
  \pscircle[fillstyle=solid,fillcolor=white](1,1){.1}
  \pscircle[fillstyle=solid,fillcolor=white](3,1){.1}
  \pscircle[fillstyle=solid,fillcolor=white](0,2){.1}
  \pscircle[fillstyle=solid,fillcolor=white](2,2){.1}
  \pscircle[fillstyle=solid,fillcolor=black](1,0){.1}
  \pscircle[fillstyle=solid,fillcolor=black](3,0){.1}
  \pscircle[fillstyle=solid,fillcolor=black](0,1){.1}
  \pscircle[fillstyle=solid,fillcolor=black](2,1){.1}
  \pscircle[fillstyle=solid,fillcolor=black](1,2){.1}
  \pscircle[fillstyle=solid,fillcolor=black](3,2){.1}
  \multiput(0,0)(0,1){2}{
  \multiput(0,0)(1,0){3}{\psdot(.5,.5)}
  }
   }

  \end{pspicture}
  \caption{(1) square move(s-move)~~
  (2) triangular move(t-move)}\label{fig:1-4}
 \end{center}
\end{figure}

\noindent
In Section 3, 
we show the local move connectedness(LMC in short), 
i.e., 
any two dimer coverings on 
$G^{(m,n)}$, $G^{(k)}$ 
can be transformed each other by a successive application of local moves. 

By LMC, 
we can simulate an ergodic Markov chain whose state space is 
\[
{\cal D}(G) := \{ \mbox{ dimer covering of $G$ } \}.
\]
Under a suitable choice of transition probabilities, 
its stationary distribution is uniform so that we can obtain (approximately)uniform sample. 
When we carry out the simulation, 
we see that the impurities are always pushed out to the boundary of 
$G$, 
whichever the initial state is(Figure $\ref{fig:1-5}$), 
\\
%

\begin{figure}[H]
\begin{center}
  \includegraphics[width=10cm]{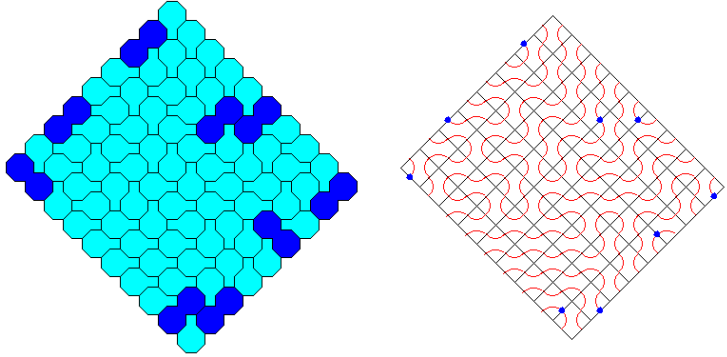}
 \caption{a result of simulation. Impurities are tend to distribute on
 boundaries.}
 \label{fig:1-5}
\end{center}
\end{figure}

\noindent
Thus we are led to the following conjecture. \\
\noindent
``The number of dimer coverings 
with given configuration of impurities 
is maximized if all impurities are on the boundary of 
$G$"
$\quad\cdots$
$(*)$\\
\noindent
which is our motivation to consider this problem
\footnote{
Let 
$D$ 
be the number of dimer coverings and 
$C$ 
be the number of dimer coverings with an impurity in the center of the graph 
$G$. 
For 
$G=G^{(3,2)}$, 
$D=160$, $C=8$, 
and for 
$G=G^{(4,3)}$, 
$D=12400$, $C=400$. 
}. 
In our previous work \cite{NS}, 
we studied the case of 1-impurity and obtained the formula to compute 
$| {\cal D}(G^{(1)}) |$ 
and the probability of finding the impurity at given edge, which, if 
$G^{(1)}$ 
is the 1-dimentional chain, 
decreases exponentially as the edge being far from the end of 
$G^{(1)}$, 
and thus solving this conjecture for 
$G^{(1)}$. 
In Section 4, 
we partially extend the analysis given in \cite{NS} to 
$G^{(k)}$. 

The rest of this paper is organized as follows.
In Section 2, 
we consider graphs 
$G_1$, $G_2$
with 
$V(G_1)=V(G) \cap {\cal V}_1$, $V(G_2)=V(G) \cap {\cal V}_2$
and show that there is a bijection between 
${\cal D}(G)$
and the set of spanning forests of 
$G_1$
(or $G_2$) 
and the configuration of impurities satisfying certain condition
(Theorem \ref{pairing}, \ref{k impurity case}).
This bijection 
can be regarded as a generalization of the 
Temperley bijection 
\cite{KPW}
and gives us the global structure of the configuration of dimers.
In Section 3, 
we prove LMC by using Theorem \ref{pairing}, \ref{k impurity case}.
LMC 
has been proved by \cite{OS} for normal subgraphs of 
${\cal G}$. 
Our proof 
works only for 
$G^{(m,n)}$ 
and 
$G^{(k)}$ 
but is elementary and straightforward. 
It is also possible 
to obtain a bound on the number of steps needed to transform two given dimer coverings each other, 
which depends polynomially on the volume of 
$G$. 
Thus 
it would be interesting to study the mixing property of the Markov chain discussed above. 
In Section 4, 
we extend the result in \cite{NS} to the case of 
$G^{(k)}$
($k \ge 2$).
We are not able to derive formulas for 
$| {\cal D}(G^{(k)}) |$ 
and the probability of finding the impurity, but derived bounds of them.
In Appendix, 
we compute 
$| {\cal D}(G) |$ 
for some examples by using Theorem \ref{pairing}, \ref{k impurity case}. 
%

\section{Transform to the spanning forest}
In this section 
we construct a mapping from the set of dimer covering of 
$G$
to the set of spanning forests of two graphs made from 
$G$. 
We do this for 
$G = G^{(m,n)}$
in the subsection 2.1, 
and for 
$G = G^{(k)}$
in the subsection 2.2. 
We omit 
the superscript and write 
$G$ 
instead of 
$G^{(m,n)}$ 
(resp. instead of $G^{(k)}$) 
in subsection 2.1
(resp. in subsection 2.2). 
We first set 
\begin{eqnarray*}
&&V_1 := V(G) \cap {\cal V}_1, 
\quad
V_2 := V(G) \cap {\cal V}_2, 
\\
&&E_1 := E(G) \cap {\cal E}_1, 
\quad
E_2 := E(G) \cap {\cal E}_2, 
\quad
G = G^{(m,n)}, G^{(k)}.
\end{eqnarray*}
\subsection{Construction of a bijection : for 
$G^{(m,n)}$}
Let 
$G_1$, $G_2$
be graphs such that  
$V(G_j)=V_j$
($j=1,2$)
and for 
$x, y \in V_j$, 
we set 
$(x,y) \in E(G_j)$
iff we find 
$z \in V_3$,  
which we call the middle vertex, 
with 
$(x,z), (z,y) \in E_1$. 
An explicit description is given in 
Figure \ref{fig:2-1-1} (2), (3).\\
%

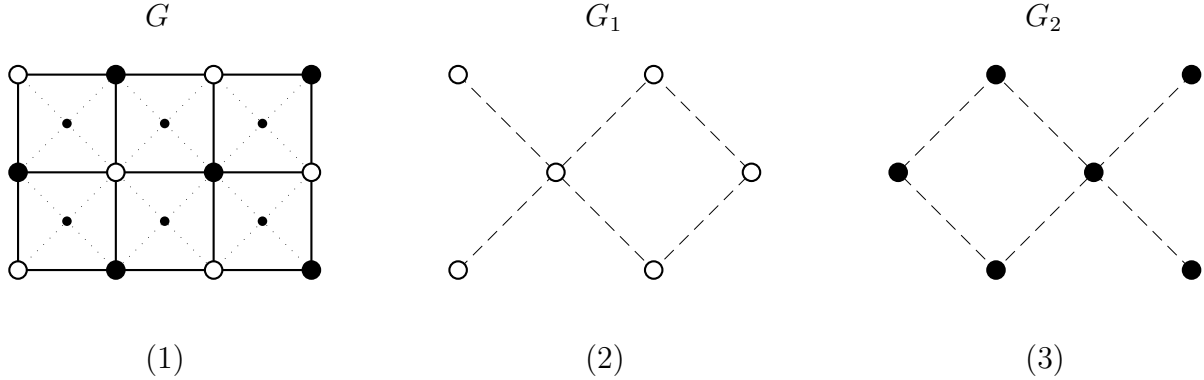
\begin{figure}[H]
\begin{center}
 \begin{pspicture}(15,4)(0,-1.5)
  \psset{unit=1.3}
  \psline(0,0)(3,0)(3,2)(0,2)(0,0)
  \psline(0,1)(3,1)
  \psline(1,0)(1,2)
  \psline(2,0)(2,2)
  \psline[linewidth=0.01,linestyle=dotted](0,0)(2,2)(3,1)(2,0)(0,2)
  \psline[linewidth=0.01,linestyle=dotted](3,0)(1,2)(0,1)(1,0)(3,2)
  \pscircle[fillstyle=solid,fillcolor=white](0,0){.1}
  \pscircle[fillstyle=solid,fillcolor=white](2,0){.1}
  \pscircle[fillstyle=solid,fillcolor=white](1,1){.1}
  \pscircle[fillstyle=solid,fillcolor=white](3,1){.1}
  \pscircle[fillstyle=solid,fillcolor=white](0,2){.1}
  \pscircle[fillstyle=solid,fillcolor=white](2,2){.1}
  \pscircle[fillstyle=solid,fillcolor=black](1,0){.1}
  \pscircle[fillstyle=solid,fillcolor=black](3,0){.1}
  \pscircle[fillstyle=solid,fillcolor=black](0,1){.1}
  \pscircle[fillstyle=solid,fillcolor=black](2,1){.1}
  \pscircle[fillstyle=solid,fillcolor=black](1,2){.1}
  \pscircle[fillstyle=solid,fillcolor=black](3,2){.1}
  \multiput(0,0)(0,1){2}{
  \multiput(0,0)(1,0){3}{\psdot(.5,.5)}
  }
  \put(1.3,2.5){$G$}
  \put(1.3,-1){$(1)$}

  \put(4.5,0){
  \psline[linewidth=0.01,linestyle=dashed](0,0)(2,2)(3,1)(2,0)(0,2)
  \pscircle[fillstyle=solid,fillcolor=white](0,0){.1}
  \pscircle[fillstyle=solid,fillcolor=white](2,0){.1}
  \pscircle[fillstyle=solid,fillcolor=white](1,1){.1}
  \pscircle[fillstyle=solid,fillcolor=white](3,1){.1}
  \pscircle[fillstyle=solid,fillcolor=white](0,2){.1}
  \pscircle[fillstyle=solid,fillcolor=white](2,2){.1}
  \put(1.3,-1){$(2)$}
  \put(1.3,2.5){$G_1$}
  }

  \put(9,0){
  \psline[linewidth=0.01,linestyle=dashed](3,0)(1,2)(0,1)(1,0)(3,2)
  \pscircle[fillstyle=solid,fillcolor=black](1,0){.1}
  \pscircle[fillstyle=solid,fillcolor=black](3,0){.1}
  \pscircle[fillstyle=solid,fillcolor=black](0,1){.1}
  \pscircle[fillstyle=solid,fillcolor=black](2,1){.1}
  \pscircle[fillstyle=solid,fillcolor=black](1,2){.1}
  \pscircle[fillstyle=solid,fillcolor=black](3,2){.1}
  \put(1.3,-1){$(3)$}
  \put(1.3,2.5){$G_2$}
  }
 \end{pspicture}
  \caption{(1) graph 
$G^{(3,2)}$ 
(2), (3) 
$G_1$, $G_2$ 
corresponding to 
$G^{(3,2)}$.}\label{fig:2-1-1}
\end{center}
\end{figure}

Let 
$k := \frac {m+n+1}{2}$ 
be the number of impurities. 
\begin{theorem}
\label{pairing}
We have a bijection between the following two sets. 
\begin{eqnarray*}
{\cal D}(G) & := & 
\{ \mbox{ dimer covering on 
$G$ } \}
\\
{\cal F}(G, P) & := & 
\{ (F_1, F_2, \{ e_j \}_{j=1}^k)
\, | \, 
F_j  : 
\mbox{spanning forests on 
$G_j (j=1,2)$ }
\\
&&\qquad\qquad\qquad\qquad
\mbox{
with 
$k$-components,  }
\\
&&\qquad\qquad\qquad\qquad
\{ e_j \}_{j=1}^k \subset E_2 : 
\mbox{configuration of impurities,}
\\
&&\qquad\qquad\qquad\qquad
\mbox{  
with condition (P) } \}
\end{eqnarray*}
(P) : 
(1) 
$F_1$, $F_2$ 
have no intersections, 
\\
(2)
each subtrees of 
$F_1$
are paired with those of
$F_2$ 
by impurities. 
\end{theorem}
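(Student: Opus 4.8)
The plan is to build two explicit maps $\Phi:\mathcal{D}(G)\to\mathcal{F}(G,P)$ and $\Psi:\mathcal{F}(G,P)\to\mathcal{D}(G)$ and to show they are mutually inverse. Given a dimer covering $M$, every middle vertex $z\in V_3$ is covered by a unique dimer of $\mathcal{E}_1$ joining $z$ to one of the two white or two black corners of its face; note that $z$ is the midpoint of exactly one edge of $G_1$ (its two white corners) and of exactly one edge of $G_2$ (its two black corners). I would define $\Phi(M)=(F_1,F_2,\{e_j\})$ by setting $\{e_j\}:=M\cap\mathcal{E}_2$, taking $F_1$ to be the set of $G_1$-edges whose middle vertex is matched to a white corner, and $F_2$ the set of $G_2$-edges whose middle vertex is matched to a black corner. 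Since each $z\in V_3$ is matched either to white or to black, it contributes to exactly one of $F_1,F_2$, so $|F_1|+|F_2|=|V_3|$; counting white vertices matched to middle vertices gives $|F_1|=|V_1|-k$ and likewise $|F_2|=|V_2|-k$, which is exactly the edge count of a spanning forest with $k$ components. Condition (P)(1) is immediate, since two diagonals can cross only as the two diagonals of a common face, and a middle vertex is matched to one colour only.

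The heart of the matter is to prove that $F_1$, and symmetrically $F_2$, is acyclic. First I would orient each edge of $F_1$ away from the corner to which its middle vertex is matched. Because each white vertex is covered by at most one $\mathcal{E}_1$-dimer, every vertex then has out-degree $\le 1$, with out-degree $0$ exactly at the $k$ white endpoints of the impurities; consequently any cycle of $F_1$ must be a consistently oriented directed cycle. Suppose such a cycle $\gamma$ existed. Along $\gamma$ each white vertex is matched to the middle vertex on its outgoing edge, so all vertices and all middle vertices of $\gamma$ are consumed by dimers lying on $\gamma$ itself. The key geometric observation is that no edge of $\mathcal{G}$ meets the relative interior of a diagonal of a face (the $\mathcal{E}_1$- and $\mathcal{E}_2$-edges touch such a diagonal only at its endpoints, which are vertices of $\mathcal{G}$); hence, by the Jordan curve theorem, no dimer joins the interior of the disc bounded by $\gamma$ to its exterior, and $M$ restricts to a perfect matching of the vertices lying strictly inside $\gamma$.

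I would then reach a contradiction by a parity count. Inside $\gamma$ the middle vertices can only be matched to square-lattice vertices, and the leftover square-lattice vertices pair off white-with-black along $\mathcal{E}_2$, so $|W^\circ|+|B^\circ|-|Z^\circ|$ must be even, where $W^\circ,B^\circ,Z^\circ$ are the white, black and middle vertices strictly inside $\gamma$. On the other hand $\gamma$ is a cycle of the white sublattice $G_1$, whose bounded faces are centred at the enclosed black vertices, so Euler's formula for the subdivided disc gives $|Z^\circ|=|W^\circ|+|B^\circ|-1$, making the leftover count equal to $1$, which is odd, a contradiction. (The smallest case, where $\gamma$ surrounds a single black vertex, already exhibits the mechanism: that black vertex has no available partner.) Thus $F_1$ is acyclic, and being a forest with $|V_1|-k$ edges it has exactly $k$ components; the orientation shows each component is a tree with a unique root, namely the white endpoint of some impurity, and the same analysis for $F_2$ pairs that white subtree with the black subtree rooted at the other endpoint of the same impurity, which is condition (P)(2).

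Finally I would define $\Psi$: keep the impurities as dimers, root each subtree of $F_1$ (resp.\ $F_2$) at the endpoint of the impurity pairing it via (P)(2), orient its edges towards the root, and match every non-root vertex to the middle vertex on the edge joining it to its parent. Condition (P)(1) ensures the $G_1$- and $G_2$-edges use disjoint middle vertices, and the edge counts show every middle vertex is used exactly once, so $\Psi(F_1,F_2,\{e_j\})$ is a genuine dimer covering. Tracing both constructions gives $\Psi\circ\Phi=\mathrm{id}$ and $\Phi\circ\Psi=\mathrm{id}$: in $\Phi(M)$ a middle vertex matched to a white corner $x$ yields a $G_1$-edge oriented away from $x$, so $x$ is the child on that edge and $\Psi$ re-matches $x$ to the same middle vertex. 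I expect the acyclicity step to be the main obstacle; the remainder is bookkeeping, and the only other point needing care is the geometric claim that a cycle of $G_1$ genuinely isolates its interior.
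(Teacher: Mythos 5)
Your construction is correct, and it produces exactly the bijection of the paper: your $F_1,F_2$ (keep the diagonal of a face according to the colour of the corner its centre is matched to) are precisely the domains cut out by the paper's slit curves (Figure \ref{fig:2-1-3}), and your $\Psi$ is the paper's Proposition \ref{opposite way} (root each subtree at its impurity endpoint and match every other vertex to the middle vertex of its parent edge). Where you genuinely diverge is the crucial acyclicity step. The paper proves that slit curves cannot close up (Proposition \ref{properties of lines}) by a global count special to $G^{(m,n)}$: with no closed curve there are $m+n+1$ domains, each domain contains an odd number of vertices and so needs at least one impurity endpoint, and the $2k=m+n+1$ endpoints are exactly exhausted, so a closed curve would create extra domains and ``run out of impurities''; the same count simultaneously delivers the pairing (P)(2). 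You instead argue locally: orienting each $F_1$-edge away from the matched endpoint gives out-degree at most one, so a hypothetical cycle $\gamma$ must be consistently oriented and consume all its vertices and edge-midpoints (note this also forbids $F_1$-chords), the Jordan curve theorem traps the interior, and Euler's formula $|Z^\circ|=|W^\circ|+|B^\circ|-1$ makes the trapped vertex set of odd size, so no perfect matching exists --- I checked the count and it is right, the one- and two-cell cases confirming it. Your orientation argument then gives the component number $k$ and a unique root per component for free, which is (P)(2) without any domain counting. What each approach buys: the paper's slit curves are machinery that is reused later (the local-move analysis and Theorem \ref{k impurity case} for $G^{(k)}$), and its counting is short once the domain count $m+n+1$ is granted; your argument is independent of the shape of the region and of the exact number of impurities, so it extends verbatim to the general block-built simply connected regions of the paper's remark after Theorem \ref{pairing} (Figure \ref{fig:2-1-8}), and it is tighter at the one point the paper leaves informal, namely why a closed curve forces more than $m+n+1$ domains. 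One wording repair: the edges of ${\cal G}$ are the half-diagonals joining corners to face centres, and the centre is an interior point of the full white diagonal, so state the geometric claim as ``every edge of ${\cal G}$ meets $\gamma$ only at vertices of ${\cal G}$ lying on $\gamma$''; since those vertices are all matched along $\gamma$, your conclusion stands unchanged.
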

Under condition (P)(1), the spanning forest of 
$G_1$
uniquely determines that of 
$G_2$ 
so that we have a redundancy in the statement. 
Figure \ref{fig:2-1-2} (1)
shows an example of spanning forests and impurities with condition 
(P)
for 
$G^{(3,2)}$.
In this figure, 
spanning trees of both 
$G_1$ 
and 
$G_2$ 
are composed of three trees for each and are paired by impurities. 
Figure \ref{fig:2-1-2} (2)
is the corresponding dimer covering of 
$G$. \\
%

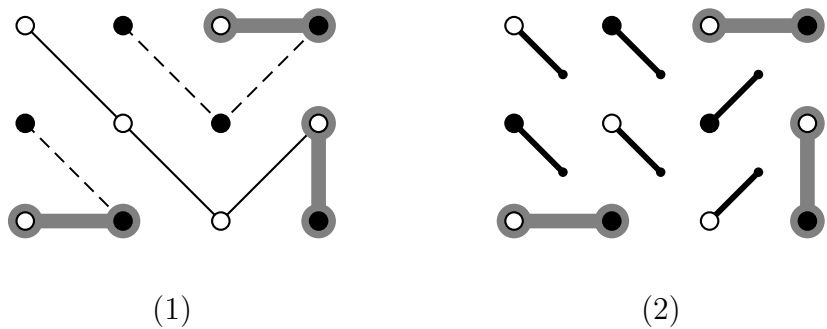
\begin{figure}[H]
 \begin{center}
  \begin{pspicture}(10,4)(0,-1.5)
   \psset{unit=1.3}
   \psline[linewidth=0.02,linestyle=dashed](1,2)(2,1)(3,2)
   \psline[linewidth=0.02,linestyle=dashed](0,1)(1,0)
   \psline(0,2)(2,0)(3,1)
   \psline[linewidth=.15,linecolor=gray]{*-*}(0,0)(1,0)
   \psline[linewidth=.15,linecolor=gray]{*-*}(3,0)(3,1)
   \psline[linewidth=.15,linecolor=gray]{*-*}(3,2)(2,2)
  \pscircle[fillstyle=solid,fillcolor=white](0,0){.1}
  \pscircle[fillstyle=solid,fillcolor=white](2,0){.1}
  \pscircle[fillstyle=solid,fillcolor=white](1,1){.1}
  \pscircle[fillstyle=solid,fillcolor=white](3,1){.1}
  \pscircle[fillstyle=solid,fillcolor=white](0,2){.1}
  \pscircle[fillstyle=solid,fillcolor=white](2,2){.1}
  \pscircle[fillstyle=solid,fillcolor=black](1,0){.1}
  \pscircle[fillstyle=solid,fillcolor=black](3,0){.1}
  \pscircle[fillstyle=solid,fillcolor=black](0,1){.1}
  \pscircle[fillstyle=solid,fillcolor=black](2,1){.1}
  \pscircle[fillstyle=solid,fillcolor=black](1,2){.1}
  \pscircle[fillstyle=solid,fillcolor=black](3,2){.1}
  \put(1.3,-1){$(1)$}

   \put(5,0){
   \psline[linewidth=.06](0,1)(.5,.5)\psdot(.5,.5)
   \put(1,0){\psline[linewidth=.06](0,1)(.5,.5)\psdot(.5,.5)}
   \put(0,1){\psline[linewidth=.06](0,1)(.5,.5)\psdot(.5,.5)}
   \put(1,1){\psline[linewidth=.06](0,1)(.5,.5)\psdot(.5,.5)}
   \put(2,0){\psline[linewidth=.06](0,0)(.5,.5)\psdot(.5,.5)}
   \put(2,1){\psline[linewidth=.06](0,0)(.5,.5)\psdot(.5,.5)}
   \psline[linewidth=.15,linecolor=gray]{*-*}(0,0)(1,0)
   \psline[linewidth=.15,linecolor=gray]{*-*}(3,0)(3,1)
   \psline[linewidth=.15,linecolor=gray]{*-*}(3,2)(2,2)
   \pscircle[fillstyle=solid,fillcolor=white](0,0){.1}
   \pscircle[fillstyle=solid,fillcolor=white](2,0){.1}
   \pscircle[fillstyle=solid,fillcolor=white](1,1){.1}
   \pscircle[fillstyle=solid,fillcolor=white](3,1){.1}
   \pscircle[fillstyle=solid,fillcolor=white](0,2){.1}
   \pscircle[fillstyle=solid,fillcolor=white](2,2){.1}
   \pscircle[fillstyle=solid,fillcolor=black](1,0){.1}
   \pscircle[fillstyle=solid,fillcolor=black](3,0){.1}
   \pscircle[fillstyle=solid,fillcolor=black](0,1){.1}
   \pscircle[fillstyle=solid,fillcolor=black](2,1){.1}
   \pscircle[fillstyle=solid,fillcolor=black](1,2){.1}
   \pscircle[fillstyle=solid,fillcolor=black](3,2){.1}
   \put(1.3,-1){$(2)$}
   }
  \end{pspicture}
  \caption{(1) an example of spanning forests and impurities with condition 
(P). 
Solid lines are forest of 
$G_1$, 
and dotted lines are that of 
$G_2$. 
(2) corresponding dimer covering of 
$G$}\label{fig:2-1-2}
 \end{center}
\end{figure}

The proof of Theorem \ref{pairing}
requires some preparations.
First of all, 
given dimer covering of
$G$, 
we draw curves on 
$G$,  
which we call the slit curve,  
as is described in 
Figure \ref{fig:2-1-3}. \\
%

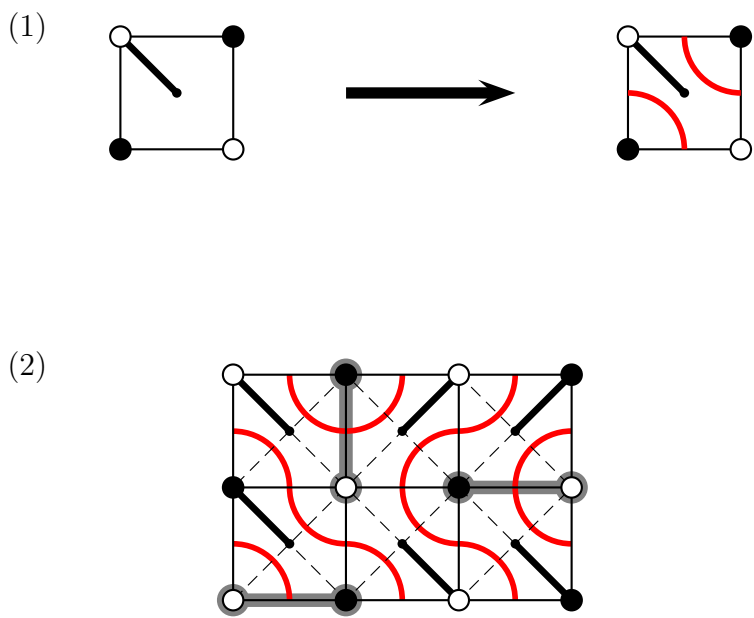
\begin{figure}[H]
 \begin{center}
  \begin{pspicture}(8,8)
   \psset{unit=1.5}
   \put(0,4){
   \psline(0,0)(1,0)(1,1)(0,1)(0,0)
   \psline[linewidth=.06](0,1)(.5,.5)
   \psdot(.5,.5)
   \pscircle[fillstyle=solid,fillcolor=black](0,0){.1}
   \pscircle[fillstyle=solid,fillcolor=black](1,1){.1}
   \pscircle[fillstyle=solid,fillcolor=white](1,0){.1}
   \pscircle[fillstyle=solid,fillcolor=white](0,1){.1}

   \put(4.5,0){
   \psline(0,0)(1,0)(1,1)(0,1)(0,0)   
   \psline[linewidth=.06](0,1)(.5,.5)
   \psdot(.5,.5)
   \pscircle[fillstyle=solid,fillcolor=black](0,0){.1}
   \pscircle[fillstyle=solid,fillcolor=black](1,1){.1}
   \pscircle[fillstyle=solid,fillcolor=white](1,0){.1}
   \pscircle[fillstyle=solid,fillcolor=white](0,1){.1}
   \psarc[linewidth=.05,linecolor=red](0,0){.5}{0}{90}
   \psarc[linewidth=.05,linecolor=red](1,1){.5}{180}{270}
   }
   \psline[linewidth=.1]{->}(2.,.5)(3.5,.5)
   }
   \put(-1,5){$(1)$}

   \put(1,.0){
   \put(0,1){\psline[linewidth=.06](0,0)(.5,-.5)}
   \put(0,2){\psline[linewidth=.06](0,0)(.5,-.5)}
   \put(2,0){\psline[linewidth=.06](0,0)(-.5,.5)}
   \put(2,2){\psline[linewidth=.06](0,0)(-.5,-.5)}
   \put(3,0){\psline[linewidth=.06](0,0)(-.5,.5)}
   \put(3,2){\psline[linewidth=.06](0,0)(-.5,-.5)}
   \psline[linewidth=.12,linecolor=gray]{*-*}(0,0)(1,0)
   \psline[linewidth=.12,linecolor=gray]{*-*}(1,1)(1,2)
   \psline[linewidth=.12,linecolor=gray]{*-*}(2,1)(3,1)
   \psarc[linecolor=red,linewidth=.05](0,0){.5}{0}{90}
   \multiput(0,1)(1,-1){2}{\psarc[linecolor=red,linewidth=.05](0,0){.5}{0}{90}}
   \psarc[linecolor=red,linewidth=.05](1,1){.5}{180}{270}
   \psarc[linecolor=red,linewidth=.05](1,2){.5}{180}{360}
   \psarc[linecolor=red,linewidth=.05](2,2){.5}{270}{360}
   \psarc[linecolor=red,linewidth=.05](2,0){.5}{0}{90}
   \psarc[linecolor=red,linewidth=.05](2,1){.5}{90}{270}
   \psarc[linecolor=red,linewidth=.05](3,1){.5}{90}{270}
   \psline(0,0)(3,0)(3,2)(0,2)(0,0)
  \psline(0,1)(3,1)
  \psline(1,0)(1,2)
  \psline(2,0)(2,2)
  \psline[linewidth=0.01,linestyle=dashed](0,0)(2,2)(3,1)(2,0)(0,2)
  \psline[linewidth=0.01,linestyle=dashed](3,0)(1,2)(0,1)(1,0)(3,2)
  \pscircle[fillstyle=solid,fillcolor=white](0,0){.1}
  \pscircle[fillstyle=solid,fillcolor=white](2,0){.1}
  \pscircle[fillstyle=solid,fillcolor=white](1,1){.1}
  \pscircle[fillstyle=solid,fillcolor=white](3,1){.1}
  \pscircle[fillstyle=solid,fillcolor=white](0,2){.1}
  \pscircle[fillstyle=solid,fillcolor=white](2,2){.1}
  \pscircle[fillstyle=solid,fillcolor=black](1,0){.1}
  \pscircle[fillstyle=solid,fillcolor=black](3,0){.1}
  \pscircle[fillstyle=solid,fillcolor=black](0,1){.1}
  \pscircle[fillstyle=solid,fillcolor=black](2,1){.1}
  \pscircle[fillstyle=solid,fillcolor=black](1,2){.1}
  \pscircle[fillstyle=solid,fillcolor=black](3,2){.1}
  \multiput(0,0)(0,1){2}{
  \multiput(0,0)(1,0){3}{\psdot(.5,.5)}
  }
   }
   \put(-1,2){$(2)$}

  \end{pspicture}
  \caption{(1) drawing slit curves on a block in 
$G$
with given dimer covering, 
(2) example of slit curves corresponding to the dimer covering in 
Figure \ref{fig:1-2}}\label{fig:2-1-3}

 \end{center}
\end{figure}

\noindent
We study
some properties of these curves. 
Some inspection 
leads us to the following observation. 
\begin{proposition}
(1)
By the triangular-move, 
impurities move along the slit curves, but the slit curves  remain unchanged. \\
(2)
By the square-move, 
slit curves may switch each other, 
but the impurities remain unchanged. 
\end{proposition}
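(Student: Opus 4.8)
This proposition is local in nature, so my plan is to reduce it to a finite inspection governed by a single rule. The first step is to extract from Figure~\ref{fig:2-1-3} the precise recipe for the slit curves: in each elementary face $f$ the covering matches the centre $z\in{\cal V}_3$ to exactly one corner $c_f\in{\cal V}_1\cup{\cal V}_2$, and one then draws the two quarter-arcs rounding off the two corners of $f$ that are \emph{adjacent} to $c_f$, leaving $c_f$ and its opposite corner sharp. Hence the arcs drawn inside $f$ depend on the covering only through $c_f$, and two coverings produce the same slit pattern in $f$ exactly when their $c_f$ agree. Since each of the two moves acts inside a bounded region and changes $c_f$ only for the faces it meets, it suffices to track $c_f$ there. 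Because the lattice and this recipe are covariant under the symmetries of ${\bf Z}^2$, each move occurs in finitely many rotated and reflected types, so I would verify one representative of each and conclude by symmetry.

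For part~(1), I would present the t-move of Figure~\ref{fig:1-4}(2) as the rotation of an alternating $4$-cycle on vertices $z,c,p,c'$: its two edges $zc$ and $zc'$ lie in ${\cal E}_1$ and meet the centre $z$ of a single face $f$, while its two edges $cp$ and $c'p$ lie in ${\cal E}_2$ and are impurity edges sharing a pivot corner $p$. The crucial point is that $c$ and $c'$ are forced to be the two corners of $f$ that are square-lattice neighbours of $p$, so they are \emph{opposite} corners of $f$ and therefore share the same pair of adjacent corners. Consequently, when the move switches $c_f$ from $c$ to $c'$ the rounded pair in $f$ is unchanged; no other face is touched, so every slit curve is preserved. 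It then remains to identify the motion of the impurity with motion along this fixed curve: the two edges of $f$ incident to $p$ are exactly $pc$ and $pc'$, so the quarter-arc rounding $p$ inside $f$ has its endpoints at the midpoints of $pc$ and $pc'$; this arc is unchanged by the move (since $p$ is adjacent to the matched corner both before and after), and the foot of the impurity slides along it from the midpoint of $cp$ to the midpoint of $c'p$ as the impurity edge $cp$ is replaced by $c'p$. This is the meaning of ``impurities move along the slit curves''.

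For part~(2), I would present the s-move of Figure~\ref{fig:1-4}(1) as the rotation of a diamond $4$-cycle on vertices $z_1,u,z_2,w$, where $z_1,z_2$ are the centres of two faces $f_1,f_2$ sharing an edge and $u,w$ are the two corners lying on that shared edge. All four of its edges belong to ${\cal E}_1$, so no impurity edge enters the move and the configuration of impurities is literally unchanged. The move switches $c_{f_1}$ from $w$ to $u$ and $c_{f_2}$ from $u$ to $w$; since $u$ and $w$ are the endpoints of the shared edge they are \emph{adjacent} corners of each face, so the rounded pair changes in both faces, and tracing the affected arcs shows that the strands threading the diamond are reconnected the other way, i.e.\ the slit curves switch. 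The contrast with part~(1) is precisely that the t-move sends a matched corner to its opposite (fixing the rounded pair), whereas the s-move sends it to an adjacent corner (altering it).

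The step I expect to require the most care is the topological bookkeeping hidden in the phrases ``sit on'', ``move along'' and ``switch'': one must check that the quarter-arc at the pivot really has its endpoints at the midpoints of the two candidate impurity edges, that the incidences match up consistently across the shared edges of neighbouring faces so that the arcs compose into genuine curves, and that the boundary faces of $G$ --- where some corners or adjacent faces are absent --- cause no exceptions. Once the slit rule and the finite list of move-types are fixed, each of these reduces to a direct check, and the proposition follows.
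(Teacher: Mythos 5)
Your proposal is correct, and it in fact supplies more than the paper does: the paper gives no proof of this proposition at all, introducing it with the phrase ``some inspection leads us to the following observation'' and supporting it only by the worked example of Figure \ref{fig:2-1-4}. Your reduction to a single invariant rule --- in each face the two quarter-arcs round off exactly the two corners adjacent to the corner matched to the centre, so the slit pattern of a face is a function of the matched corner $c_f$ alone --- is precisely the right way to turn that inspection into an argument, and your structural identification of the two moves is accurate: the t-move is an alternating $4$-cycle through the pivot $p$ whose two corner-vertices are forced to be opposite corners of the face, so $c_f$ flips to its opposite and the rounded pair (hence every arc) is unchanged, with the impurity midpoint sliding along the fixed quarter-arc centred at $p$; the s-move rotates a diamond whose four edges all lie in ${\cal E}_1$, so no impurity is touched, while the matched corner of each of the two faces moves to an \emph{adjacent} corner, which changes the rounded pairs and reconnects the strands --- exactly the dichotomy the paper's example displays. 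One small inaccuracy: your opening claim that two coverings produce the same slit pattern in $f$ ``exactly when their $c_f$ agree'' is too strong, since opposite matched corners share the same pair of adjacent corners and hence produce identical arcs; the arcs determine $c_f$ only up to the opposite-corner involution. This slip is harmless --- indeed your part~(1) depends on the very fact that refutes the ``only if'' direction, and you only ever use the true implication from $c_f$ to the arcs.
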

Figure \ref{fig:2-1-4} 
explains what they mean by presenting an explicit example. \\
%

\begin{figure}[H]
 \begin{center}
  \begin{pspicture}(5,15)
   \psset{unit=1.5}
   \put(0,8.0){
   \pspolygon[linewidth=0,fillstyle=solid,fillcolor=yellow](2,1)(3,1)(3,2)
   \put(0,1){\psline[linewidth=.06](0,0)(.5,-.5)}
   \put(0,2){\psline[linewidth=.06](0,0)(.5,-.5)}
   \put(2,0){\psline[linewidth=.06](0,0)(-.5,.5)}
   \put(2,2){\psline[linewidth=.06](0,0)(-.5,-.5)}
   \put(3,0){\psline[linewidth=.06](0,0)(-.5,.5)}
   \put(3,2){\psline[linewidth=.06](0,0)(-.5,-.5)}
   \psline[linewidth=.12,linecolor=gray]{*-*}(0,0)(1,0)
   \psline[linewidth=.12,linecolor=gray]{*-*}(1,1)(1,2)
   \psline[linewidth=.12,linecolor=gray]{*-*}(2,1)(3,1)
   \psarc[linecolor=red,linewidth=.05](0,0){.5}{0}{90}
   \multiput(0,1)(1,-1){2}{\psarc[linecolor=red,linewidth=.05](0,0){.5}{0}{90}}
   \psarc[linecolor=red,linewidth=.05](1,1){.5}{180}{270}
   \psarc[linecolor=red,linewidth=.05](1,2){.5}{180}{360}
   \psarc[linecolor=red,linewidth=.05](2,2){.5}{270}{360}
   \psarc[linecolor=red,linewidth=.05](2,0){.5}{0}{90}
   \psarc[linecolor=red,linewidth=.05](2,1){.5}{90}{270}
   \psarc[linecolor=red,linewidth=.05](3,1){.5}{90}{270}
   \psline(0,0)(3,0)(3,2)(0,2)(0,0)
  \psline(0,1)(3,1)
  \psline(1,0)(1,2)
  \psline(2,0)(2,2)
  \psline[linewidth=0.01,linestyle=dashed](0,0)(2,2)(3,1)(2,0)(0,2)
  \psline[linewidth=0.01,linestyle=dashed](3,0)(1,2)(0,1)(1,0)(3,2)
  \pscircle[fillstyle=solid,fillcolor=white](0,0){.1}
  \pscircle[fillstyle=solid,fillcolor=white](2,0){.1}
  \pscircle[fillstyle=solid,fillcolor=white](1,1){.1}
  \pscircle[fillstyle=solid,fillcolor=white](3,1){.1}
  \pscircle[fillstyle=solid,fillcolor=white](0,2){.1}
  \pscircle[fillstyle=solid,fillcolor=white](2,2){.1}
  \pscircle[fillstyle=solid,fillcolor=black](1,0){.1}
  \pscircle[fillstyle=solid,fillcolor=black](3,0){.1}
  \pscircle[fillstyle=solid,fillcolor=black](0,1){.1}
  \pscircle[fillstyle=solid,fillcolor=black](2,1){.1}
  \pscircle[fillstyle=solid,fillcolor=black](1,2){.1}
  \pscircle[fillstyle=solid,fillcolor=black](3,2){.1}
  \multiput(0,0)(0,1){2}{
  \multiput(0,0)(1,0){3}{\psdot(.5,.5)}
  }
   \psline[linewidth=0.1]{->}(1.5,-.5)(1.5,-1.5)
   \put(1.7,-1){tri-move}
   }

   \put(0,4.0){
   \pspolygon[linewidth=0,fillstyle=solid,fillcolor=cyan](2,1)(2.5,1.5)(2,2)(1.5,1.5)
   \put(0,1){\psline[linewidth=.06](0,0)(.5,-.5)}
   \put(0,2){\psline[linewidth=.06](0,0)(.5,-.5)}
   \put(2,0){\psline[linewidth=.06](0,0)(-.5,.5)}
   \put(2,2){\psline[linewidth=.06](0,0)(-.5,-.5)}
   \put(3,0){\psline[linewidth=.06](0,0)(-.5,.5)}
   \put(2,1){\psline[linewidth=.06](0,0)(.5,.5)}
   \psline[linewidth=.12,linecolor=gray]{*-*}(0,0)(1,0)
   \psline[linewidth=.12,linecolor=gray]{*-*}(1,1)(1,2)
   \psline[linewidth=.12,linecolor=gray]{*-*}(3,2)(3,1)
   \psarc[linecolor=red,linewidth=.05](0,0){.5}{0}{90}
   \multiput(0,1)(1,-1){2}{\psarc[linecolor=red,linewidth=.05](0,0){.5}{0}{90}}
   \psarc[linecolor=red,linewidth=.05](1,1){.5}{180}{270}
   \psarc[linecolor=red,linewidth=.05](1,2){.5}{180}{360}
   \psarc[linecolor=red,linewidth=.05](2,2){.5}{270}{360}
   \psarc[linecolor=red,linewidth=.05](2,0){.5}{0}{90}
   \psarc[linecolor=red,linewidth=.05](2,1){.5}{90}{270}
   \psarc[linecolor=red,linewidth=.05](3,1){.5}{90}{270}
   \psline(0,0)(3,0)(3,2)(0,2)(0,0)
  \psline(0,1)(3,1)
  \psline(1,0)(1,2)
  \psline(2,0)(2,2)
  \psline[linewidth=0.01,linestyle=dashed](0,0)(2,2)(3,1)(2,0)(0,2)
  \psline[linewidth=0.01,linestyle=dashed](3,0)(1,2)(0,1)(1,0)(3,2)
  \pscircle[fillstyle=solid,fillcolor=white](0,0){.1}
  \pscircle[fillstyle=solid,fillcolor=white](2,0){.1}
  \pscircle[fillstyle=solid,fillcolor=white](1,1){.1}
  \pscircle[fillstyle=solid,fillcolor=white](3,1){.1}
  \pscircle[fillstyle=solid,fillcolor=white](0,2){.1}
  \pscircle[fillstyle=solid,fillcolor=white](2,2){.1}
  \pscircle[fillstyle=solid,fillcolor=black](1,0){.1}
  \pscircle[fillstyle=solid,fillcolor=black](3,0){.1}
  \pscircle[fillstyle=solid,fillcolor=black](0,1){.1}
  \pscircle[fillstyle=solid,fillcolor=black](2,1){.1}
  \pscircle[fillstyle=solid,fillcolor=black](1,2){.1}
  \pscircle[fillstyle=solid,fillcolor=black](3,2){.1}
  \multiput(0,0)(0,1){2}{
  \multiput(0,0)(1,0){3}{\psdot(.5,.5)}
  }
   \psline[linewidth=0.1]{->}(1.5,-.5)(1.5,-1.5)
   \put(1.7,-1){sq-move}
   }

   \put(0,0.0){
   \put(0,1){\psline[linewidth=.06](0,0)(.5,-.5)}
   \put(0,2){\psline[linewidth=.06](0,0)(.5,-.5)}
   \put(2,0){\psline[linewidth=.06](0,0)(-.5,.5)}
   \put(2,2){\psline[linewidth=.06](0,0)(.5,-.5)}
   \put(3,0){\psline[linewidth=.06](0,0)(-.5,.5)}
   \put(2,1){\psline[linewidth=.06](0,0)(-.5,.5)}
   \psline[linewidth=.12,linecolor=gray]{*-*}(0,0)(1,0)
   \psline[linewidth=.12,linecolor=gray]{*-*}(1,1)(1,2)
   \psline[linewidth=.12,linecolor=gray]{*-*}(3,2)(3,1)
   \psarc[linecolor=red,linewidth=.05](0,0){.5}{0}{90}
   \multiput(0,1)(1,-1){2}{\psarc[linecolor=red,linewidth=.05](0,0){.5}{0}{90}}
   \psarc[linecolor=red,linewidth=.05](1,1){.5}{180}{270}
   \psarc[linecolor=red,linewidth=.05](1,2){.5}{180}{270}
   \psarc[linecolor=red,linewidth=.05](2,2){.5}{180}{270}
   \psarc[linecolor=red,linewidth=.05](2,0){.5}{0}{90}
   \psarc[linecolor=red,linewidth=.05](2,1){.5}{180}{270}
   \psarc[linecolor=red,linewidth=.05](3,1){.5}{180}{270}
   \psarc[linecolor=red,linewidth=.05](1,1){.5}{0}{90}
   \psarc[linecolor=red,linewidth=.05](2,1){.5}{0}{90}
   \psarc[linecolor=red,linewidth=.05](3,2){.5}{180}{270}
   \psline(0,0)(3,0)(3,2)(0,2)(0,0)
  \psline(0,1)(3,1)
  \psline(1,0)(1,2)
  \psline(2,0)(2,2)
  \psline[linewidth=0.01,linestyle=dashed](0,0)(2,2)(3,1)(2,0)(0,2)
  \psline[linewidth=0.01,linestyle=dashed](3,0)(1,2)(0,1)(1,0)(3,2)
  \pscircle[fillstyle=solid,fillcolor=white](0,0){.1}
  \pscircle[fillstyle=solid,fillcolor=white](2,0){.1}
  \pscircle[fillstyle=solid,fillcolor=white](1,1){.1}
  \pscircle[fillstyle=solid,fillcolor=white](3,1){.1}
  \pscircle[fillstyle=solid,fillcolor=white](0,2){.1}
  \pscircle[fillstyle=solid,fillcolor=white](2,2){.1}
  \pscircle[fillstyle=solid,fillcolor=black](1,0){.1}
  \pscircle[fillstyle=solid,fillcolor=black](3,0){.1}
  \pscircle[fillstyle=solid,fillcolor=black](0,1){.1}
  \pscircle[fillstyle=solid,fillcolor=black](2,1){.1}
  \pscircle[fillstyle=solid,fillcolor=black](1,2){.1}
  \pscircle[fillstyle=solid,fillcolor=black](3,2){.1}
  \multiput(0,0)(0,1){2}{
  \multiput(0,0)(1,0){3}{\psdot(.5,.5)}
  }
   }

  \end{pspicture}
  \caption{changes of impurities and slit curves under the local move}
  \label{fig:2-1-4}
 \end{center}
\end{figure}
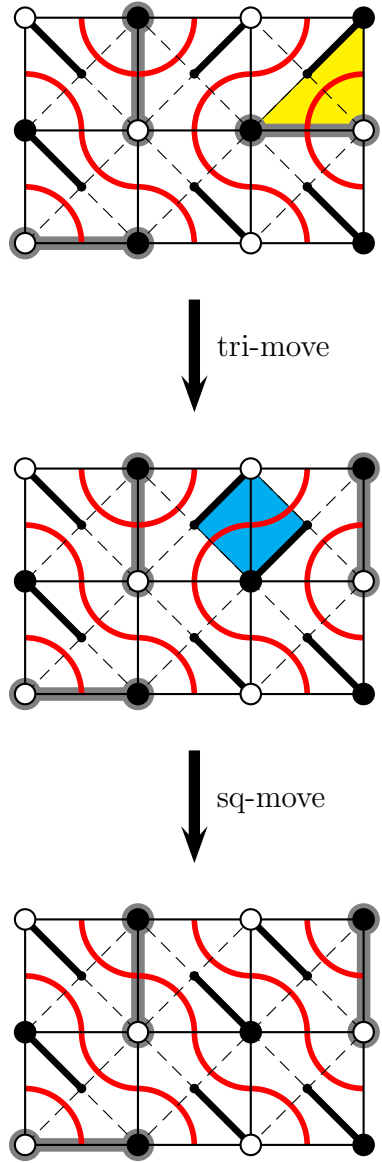

\noindent
$G$
is divided into some subgraphs by these slit curves, 
which we call domains. 
Vertices in 
$V_1$, $V_2$ 
are not in the same domain so that, by ignoring middle vertices, these domains can also be regarded as subgraphs of
$G_1$, $G_2$. 
Impurities 
always lie on slit curves and thus live in both of two neighboring domains. 
Figure \ref{fig:2-1-5} 
shows an example of domains corresponding to the dimer covering in Figure \ref{fig:2-1-3}(2). 
\\

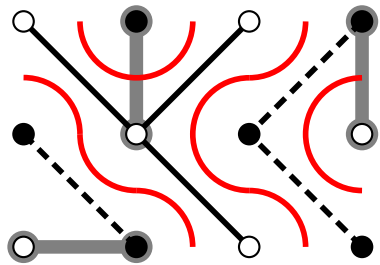
\begin{figure}[H]
 \begin{center}
  \begin{pspicture}(6,5)
   \psset{unit=1.5}
   \psline[linewidth=.12,linecolor=gray]{*-*}(0,0)(1,0)
   \psline[linewidth=.12,linecolor=gray]{*-*}(1,1)(1,2)
   \psline[linewidth=.12,linecolor=gray]{*-*}(3,2)(3,1)

   \psline[linewidth=0.05](0,2)(2,0)
   \psline[linewidth=0.05](1,1)(2,2)
   \psline[linestyle=dashed,linewidth=.05](0,1)(1,0)
   \psline[linestyle=dashed,linewidth=.05](3,0)(2,1)(3,2)
   \psarc[linecolor=red,linewidth=.05](1,1){.5}{180}{270}
   \psarc[linecolor=red,linewidth=.05](1,2){.5}{180}{360}
   \psarc[linecolor=red,linewidth=.05](2,2){.5}{270}{360}
   \psarc[linecolor=red,linewidth=.05](2,0){.5}{0}{90}
   \psarc[linecolor=red,linewidth=.05](2,1){.5}{90}{270}
   \psarc[linecolor=red,linewidth=.05](3,1){.5}{90}{270}

   \multiput(0,1)(1,-1){2}{\psarc[linecolor=red,linewidth=.05](0,0){.5}{0}{90}}
   \pscircle[fillstyle=solid,fillcolor=white](0,0){.1}
   \pscircle[fillstyle=solid,fillcolor=white](2,0){.1}
   \pscircle[fillstyle=solid,fillcolor=white](1,1){.1}
   \pscircle[fillstyle=solid,fillcolor=white](3,1){.1}
   \pscircle[fillstyle=solid,fillcolor=white](0,2){.1}
   \pscircle[fillstyle=solid,fillcolor=white](2,2){.1}
   \pscircle[fillstyle=solid,fillcolor=black](1,0){.1}
   \pscircle[fillstyle=solid,fillcolor=black](3,0){.1}
   \pscircle[fillstyle=solid,fillcolor=black](0,1){.1}
   \pscircle[fillstyle=solid,fillcolor=black](2,1){.1}
   \pscircle[fillstyle=solid,fillcolor=black](1,2){.1}
   \pscircle[fillstyle=solid,fillcolor=black](3,2){.1}
  \end{pspicture}
  \caption{Impurities are penetrated by a slid curve at their middle and
  thus they live both of neighboring domains}
  \label{fig:2-1-5}
 \end{center}
\end{figure}

\begin{proposition}
\label{properties of lines}
(0)
Slit curves do not branch and do not terminate inside
$G$. \\
(1)
The number of vertices in each domains is always odd.\\
(2)
Impurities always make pairings between two neighboring domains. \\
(3)
Slit curves are not closed. 
\end{proposition}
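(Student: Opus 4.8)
The plan is to reduce everything to the \emph{local} rule by which the slit curves are drawn inside a single block, and then to pass to a planar-duality picture relating the curves to the way each face-center is matched. Recall that in every unit block $F$ the center $c_F\in\mathcal{V}_3$ is necessarily matched, by the unique incident $\mathcal{E}_1$-dimer, to one of the four corners, say $P_F$; the construction of Figure~\ref{fig:2-1-3} then draws precisely the two quarter-arcs about the two corners of $F$ that are lattice-adjacent to $P_F$. The first step is a \textbf{Key Local Lemma}: whatever $P_F$ is, these two arcs are disjoint and non-crossing, they meet each of the four midpoints of the $\mathcal{E}_2$-edges of $F$ exactly once, and they place $P_F$ together with its opposite corner (and $c_F$) on one side while the other two, oppositely coloured, corners are cut off on the other side. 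This is a finite case check over the four choices of $P_F$, and it is where the separation ``$V_1$ and $V_2$ never lie in one domain'' comes from.

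Claim (0) is then immediate. By the Local Lemma each midpoint of an $\mathcal{E}_2$-edge carries exactly one arc-end from each incident face; at an interior edge the two ends join and the curve passes through with local degree $2$, so it neither branches nor stops, while at a boundary edge the single arc-end meets $\partial G$. Hence the slit curves form a disjoint family of simple arcs and simple loops whose only endpoints lie on $\partial G$. Claim (2) is also quick: an impurity is an $\mathcal{E}_2$-dimer, its two endpoints have opposite colours, and by the Local Lemma the slit curve through its midpoint separates them into the two neighbouring, oppositely coloured, domains; this is exactly the assertion that impurities pair adjacent domains.

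For (1) and (3) I would introduce the \emph{active-diagonal graph}: for each face put into $G_1$ (resp.\ $G_2$) the diagonal $P_FP_F'$ joining the matched corner to its opposite whenever $P_F\in\mathcal{V}_1$ (resp.\ $\mathcal{V}_2$). The Local Lemma shows that two same-colour corners lie in one domain iff they are joined by a path of active diagonals, so each domain is a connected component of the active-diagonal graph together with its incident centers, and the number of centers in a domain $D$ equals the number of active diagonals in $D$. The central lemma is then \textbf{acyclicity of the active-diagonal graph}. Granting it, every domain $D$ is a tree on its corners, so (\#centers)$=$(\#corners)$-1$ and the total vertex count $2(\#\text{corners})-1$ is odd, giving (1); moreover a closed slit curve would, under the duality, encircle precisely a cycle of active diagonals, so acyclicity forbids it, giving (3).

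The hard part will be the acyclicity. The idea is that a cycle $C$ of, say, $\mathcal{V}_1$-active diagonals is a closed polygon enclosing one or more $\mathcal{V}_2$-corners, and that the faces strung along $C$ have their centers matched \emph{outward} onto the $\mathcal{V}_1$-corners of $C$; counting the centers inside $C$ against the corners they are forced to occupy produces a $\mathcal{V}_2$-corner with no available partner. This is most transparent in the minimal case, where the four faces surrounding a single enclosed $\mathcal{V}_2$-corner force two distinct centers to claim the same $\mathcal{V}_1$-corner, contradicting the matching property. Turning this local overdemand into a clean global statement, via Euler's relation for the region bounded by $C$ or a Hall-type deficiency count, is the main obstacle I expect; it is also the precise point where one uses that $M$ is a genuine dimer covering rather than an arbitrary assignment of arcs.
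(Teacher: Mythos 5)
Your reduction of (0) to the local drawing rule is sound (the paper simply declares (0) clear), and the active-diagonal picture is an attractive formalization; but as submitted the proposal does not prove the proposition, for two concrete reasons. First, the acyclicity lemma on which both your (1) and your (3) rest is left unproven --- you yourself flag it as ``the main obstacle''. Your minimal-case check is in fact correct (four faces around a single enclosed ${\cal V}_2$-vertex do leave that vertex with no admissible partner: its four incident centers are matched outward onto the cycle corners, and those corners are in turn matched to centers), but the local overdemand does not propagate to general cycles: a longer closed curve encloses a domain of odd order which can legitimately be served by an impurity whose midpoint lies on the closed curve, so no Hall-type deficiency is visible locally, and the Euler-relation argument you defer is precisely what is missing. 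The paper avoids this entirely with a global budget that your proposal never invokes: by (\ref{impurities}) the number of impurities is exactly $(m+n+1)/2$; since each block places one arc-end at each of its four edge-midpoints, the open slit curves terminate precisely at the $2(m+n)$ boundary midpoints, giving $m+n$ arcs and hence exactly $m+n+1$ domains when no curve is closed; every domain, having odd order, must absorb at least one impurity endpoint, and each impurity supplies exactly two. A closed curve would create an extra domain and exhaust the supply --- that is the paper's entire proof of (3), and the same count is what would finish your acyclicity lemma. Note also that the paper proves (1) directly (each block added to a domain contributes its center and one new corner, so the count grows by two), independently of acyclicity, whereas in your architecture a domain whose active-diagonal component contained a cycle would have even order, so (1) and (3) stand or fall together on the unproven lemma.

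Second, your (2) establishes only a weaker statement than the one meant. That an impurity straddles the two domains adjacent across its slit curve is the trivial remark the paper already makes before the proposition (Figure \ref{fig:2-1-5}); the content of (2), as it is used for condition (P)(2) of Theorem \ref{pairing} and illustrated in Figure \ref{fig:2-1-6}, is that every domain contains \emph{exactly one} impurity, so that the impurities induce a perfect pairing of the $m+n+1$ domains. This again is the counting above: odd order forces an odd number, hence at least one, of impurity endpoints per domain, and $2\times(m+n+1)/2=m+n+1$ incidences then force exactly one per domain. In sum, only (0) is fully established; (1), (2) and (3) all require the impurity count coming from (\ref{impurities}), which appears nowhere in the proposal.
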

(0) and (3) 
implies that every slit curve terminates at boundary. 
Figure \ref{fig:2-1-6}
shows an example of (2). 
\\

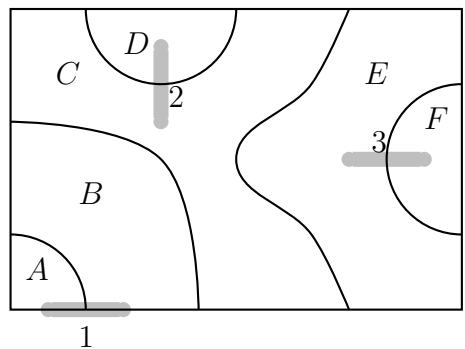
\begin{figure}[H]
 \begin{center}
  \begin{pspicture}(6,4)(0,-.5)
   \psline[linewidth=0.2,linecolor=lightgray,dotsize=.2]{*-*}(0.5,0)(1.5,0)
   \psline[linewidth=0.2,linecolor=lightgray,dotsize=.2]{*-*}(2,3.5)(2,2.5)
   \psline[linewidth=0.2,linecolor=lightgray,dotsize=.2]{*-*}(5.5,2)(4.5,2)
   \pspolygon(0,0)(6,0)(6,4)(0,4)
   \psarc(0,0){1}{0}{90}
   \pscurve(0,2.5)(2,2)(2.5,0)
   \psarc(2,4){1}{180}{360}
   \pscurve(4.5,4)(4.,3)(3,2)(4.,1)(4.5,0)
   \psarc(6,2){1}{90}{270}
   \put(.2,.4){$A$}
   \put(.9,1.4){$B$}
   \put(.6,3.){$C$}
   \put(1.5,3.4){$D$}
   \put(4.7,3.){$E$}
   \put(5.5,2.4){$F$}
   \put(0.9,-.5){$1$}
   \put(2.1,2.7){$2$}
   \put(4.8,2.1){$3$}
  \end{pspicture}
  \caption{An outline of Figure \ref{fig:2-1-3}$(2)$.
  Impurity $1$ connects domains $A$,$B$, impurity $2$ connects
  domains $C$, $D$ and impurity $3$ connects domain $E$, $F$.}
  \label{fig:2-1-6}
 \end{center}
\end{figure}

\begin{proof}
(0)
is clear. \\
(1)
It suffices to note that, 
when we add a block
\footnote{block is the one described in Figure 8(1).} to a domain, 
the number of vertices in this domain increases by two. 
\\
(2)
The number of impurities is equal to 
$\frac {m+n+1}{2}$, 
while that of domains is 
$m+n+1$, 
provided we have no closed curve
(we would have more if there are closed ones). 
By (1), 
we must have odd number of impurities in each domains. 
Since 
the number of impurities is half of that of domains, each domains have one impurity, hence impurities have to make pairings between domains. \\
(3)
If there were closed curves, 
the number of domains would be more than 
$m+n+1$, 
so that we would run out of impurities. 
\QED
\end{proof}
By Proposition \ref{properties of lines}(3), 
each domains, being regarded as subgraphs of 
$G_1$ 
or 
$G_2$, 
is a tree so that we obtain the spanning forest 
$F_1$, $F_2$ 
of 
$G_1$, $G_2$. 
Moreover, 
by Proposition \ref{properties of lines}(2), 
each subtrees of 
$F_1$, $F_2$ 
are paired by impurities so that we get an element 
$(F_1, F_2, \{ e_j \}_{j=1}^k) \in {\cal F}(G, P)$. 
Figure \ref{fig:2-1-7} 
shows a flowchart of our discussion. 
\\
%

\begin{figure}[H]
 \begin{center}
  \begin{pspicture}(5,15)
   \psset{unit=1.5}
   \put(0,8){
   \put(-1,2){$(i)$}
   \put(0,1){\psline[linewidth=.06](0,0)(.5,-.5)}
   \put(0,2){\psline[linewidth=.06](0,0)(.5,-.5)}
   \put(2,0){\psline[linewidth=.06](0,0)(-.5,.5)}
   \put(2,2){\psline[linewidth=.06](0,0)(-.5,-.5)}
   \put(3,0){\psline[linewidth=.06](0,0)(-.5,.5)}
   \put(3,2){\psline[linewidth=.06](0,0)(-.5,-.5)}
   \psline[linewidth=.12,linecolor=gray]{*-*}(0,0)(1,0)
   \psline[linewidth=.12,linecolor=gray]{*-*}(1,1)(1,2)
   \psline[linewidth=.12,linecolor=gray]{*-*}(2,1)(3,1)
   \psarc[linecolor=red,linewidth=.05](0,0){.5}{0}{90}
   \multiput(0,1)(1,-1){2}{\psarc[linecolor=red,linewidth=.05](0,0){.5}{0}{90}}
   \psarc[linecolor=red,linewidth=.05](1,1){.5}{180}{270}
   \psarc[linecolor=red,linewidth=.05](1,2){.5}{180}{360}
   \psarc[linecolor=red,linewidth=.05](2,2){.5}{270}{360}
   \psarc[linecolor=red,linewidth=.05](2,0){.5}{0}{90}
   \psarc[linecolor=red,linewidth=.05](2,1){.5}{90}{270}
   \psarc[linecolor=red,linewidth=.05](3,1){.5}{90}{270}
   \psline(0,0)(3,0)(3,2)(0,2)(0,0)
   \psline(0,1)(3,1)
   \psline(1,0)(1,2)
   \psline(2,0)(2,2)
   \psline[linewidth=0.01,linestyle=dashed](0,0)(2,2)(3,1)(2,0)(0,2)
   \psline[linewidth=0.01,linestyle=dashed](3,0)(1,2)(0,1)(1,0)(3,2)
   \pscircle[fillstyle=solid,fillcolor=white](0,0){.1}
   \pscircle[fillstyle=solid,fillcolor=white](2,0){.1}
   \pscircle[fillstyle=solid,fillcolor=white](1,1){.1}
   \pscircle[fillstyle=solid,fillcolor=white](3,1){.1}
   \pscircle[fillstyle=solid,fillcolor=white](0,2){.1}
   \pscircle[fillstyle=solid,fillcolor=white](2,2){.1}
   \pscircle[fillstyle=solid,fillcolor=black](1,0){.1}
   \pscircle[fillstyle=solid,fillcolor=black](3,0){.1}
   \pscircle[fillstyle=solid,fillcolor=black](0,1){.1}
   \pscircle[fillstyle=solid,fillcolor=black](2,1){.1}
   \pscircle[fillstyle=solid,fillcolor=black](1,2){.1}
   \pscircle[fillstyle=solid,fillcolor=black](3,2){.1}
   \multiput(0,0)(0,1){2}{
   \multiput(0,0)(1,0){3}{\psdot(.5,.5)}
   }
   }

   \put(0,4){
   \put(-1,2){$(ii)$}
   \psline[linewidth=.12,linecolor=gray]{*-*}(0,0)(1,0)
   \psline[linewidth=.12,linecolor=gray]{*-*}(1,1)(1,2)
   \psline[linewidth=.12,linecolor=gray]{*-*}(2,1)(3,1)
   \psarc[linecolor=red,linewidth=.05](0,0){.5}{0}{90}
   \multiput(0,1)(1,-1){2}{\psarc[linecolor=red,linewidth=.05](0,0){.5}{0}{90}}
   \psarc[linecolor=red,linewidth=.05](1,1){.5}{180}{270}
   \psarc[linecolor=red,linewidth=.05](1,2){.5}{180}{360}
   \psarc[linecolor=red,linewidth=.05](2,2){.5}{270}{360}
   \psarc[linecolor=red,linewidth=.05](2,0){.5}{0}{90}
   \psarc[linecolor=red,linewidth=.05](2,1){.5}{90}{270}
   \psarc[linecolor=red,linewidth=.05](3,1){.5}{90}{270}
   \psline(0,0)(3,0)(3,2)(0,2)(0,0)
   \psline(0,1)(3,1)
   \psline(1,0)(1,2)
   \psline(2,0)(2,2)
   \pscircle[fillstyle=solid,fillcolor=white](0,0){.1}
   \pscircle[fillstyle=solid,fillcolor=white](2,0){.1}
   \pscircle[fillstyle=solid,fillcolor=white](1,1){.1}
   \pscircle[fillstyle=solid,fillcolor=white](3,1){.1}
   \pscircle[fillstyle=solid,fillcolor=white](0,2){.1}
   \pscircle[fillstyle=solid,fillcolor=white](2,2){.1}
   \pscircle[fillstyle=solid,fillcolor=black](1,0){.1}
   \pscircle[fillstyle=solid,fillcolor=black](3,0){.1}
   \pscircle[fillstyle=solid,fillcolor=black](0,1){.1}
   \pscircle[fillstyle=solid,fillcolor=black](2,1){.1}
   \pscircle[fillstyle=solid,fillcolor=black](1,2){.1}
   \pscircle[fillstyle=solid,fillcolor=black](3,2){.1}
   }

   \put(0,0){
   \put(-1,2){$(iii)$}
   \psline[linewidth=.12,linecolor=gray]{*-*}(0,0)(1,0)
   \psline[linewidth=.12,linecolor=gray]{*-*}(1,1)(1,2)
   \psline[linewidth=.12,linecolor=gray]{*-*}(2,1)(3,1)
   \psline(0,0)(3,0)(3,2)(0,2)(0,0)
   \psline(0,1)(3,1)
   \psline(1,0)(1,2)
   \psline(2,0)(2,2)
   \psline[linewidth=.05](0,2)(2,0)
   \psline[linewidth=.05](1,1)(2,2)
   \psline[linewidth=.05,linestyle=dashed](0,1)(1,0)
   \psline[linewidth=.05,linestyle=dashed](3,2)(2,1)(3,0)
   \pscircle[fillstyle=solid,fillcolor=white](0,0){.1}
   \pscircle[fillstyle=solid,fillcolor=white](2,0){.1}
   \pscircle[fillstyle=solid,fillcolor=white](1,1){.1}
   \pscircle[fillstyle=solid,fillcolor=white](3,1){.1}
   \pscircle[fillstyle=solid,fillcolor=white](0,2){.1}
   \pscircle[fillstyle=solid,fillcolor=white](2,2){.1}
   \pscircle[fillstyle=solid,fillcolor=black](1,0){.1}
   \pscircle[fillstyle=solid,fillcolor=black](3,0){.1}
   \pscircle[fillstyle=solid,fillcolor=black](0,1){.1}
   \pscircle[fillstyle=solid,fillcolor=black](2,1){.1}
   \pscircle[fillstyle=solid,fillcolor=black](1,2){.1}
   \pscircle[fillstyle=solid,fillcolor=black](3,2){.1}
   }

  \end{pspicture}
  \caption{$(i)$ dimer covering $(ii)$ corresponding configuration of slit
  curves
  and impurities $(iii)$ corresponding spanning forests of $G_1$ and
  $G_2$
  with condition $(P)$. Thick lines are forest of $G_1$, and dotted
  lines are that of $G_2$.}\label{fig:2-1-7}
 \end{center}
\end{figure}
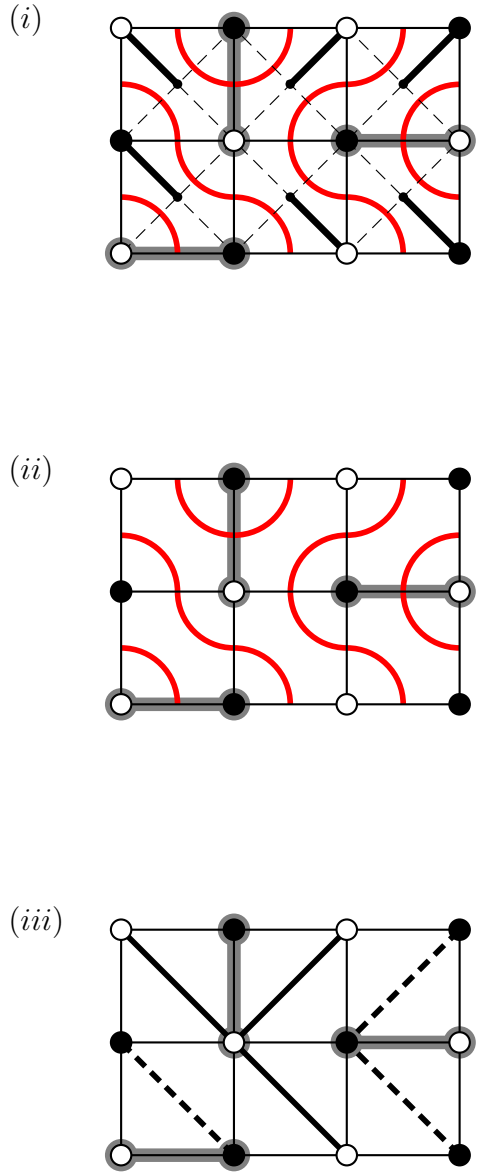

\noindent
Thus it suffices to construct the inverse mapping to finish the proof of Theorem \ref{pairing}. 
\begin{proposition}
\label{opposite way}
For given element 
$(F_1, F_2, \{ e_j \}_{j=1}^k) \in {\cal F}(G, P)$, 
we can find the corresponding dimer covering uniquely. 
\end{proposition}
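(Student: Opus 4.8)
The plan is to construct the inverse map explicitly by a Temperley-type rooting of the two forests, to verify that it yields a genuine element of ${\cal D}(G)$ by a single counting identity, and to establish uniqueness by a leaf-stripping argument inside each tree. First I would fix the bookkeeping. Each edge of $G_j$ is a diagonal passing through a unique middle vertex of $V(G)\cap{\cal V}_3$, and an impurity $e_j\in E_2$ joins a vertex $a_j\in V_1$ to a vertex $b_j\in V_2$. By condition (P)(2) the $k$ impurities put the $k$ components of $F_1$ in bijection with the $k$ components of $F_2$; write $T_1^{(j)},T_2^{(j)}$ for the pair joined by $e_j$, so that $a_j\in T_1^{(j)}$ and $b_j\in T_2^{(j)}$. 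Since $F_j$ is a spanning forest of $G_j$ with $k$ components, $|E(F_1)|+|E(F_2)|=(|V_1|-k)+(|V_2|-k)=(m+1)(n+1)-(m+n+1)=mn=|V(G)\cap{\cal V}_3|$, while condition (P)(1) forces distinct forest edges to carry distinct middle vertices. Hence the forest edges are in bijection with the middle vertices: every $z\in V(G)\cap{\cal V}_3$ is the middle vertex of exactly one edge of $F_1\cup F_2$.

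Next I would define the candidate covering $M$. Root each tree $T_1^{(j)}$ at $a_j$ and each $T_2^{(j)}$ at $b_j$. For each non-root vertex $u$, let $z_u$ be the middle vertex of the unique edge joining $u$ to its parent, and declare $(u,z_u)\in E_1$ to be a dimer; together with the impurities $\{e_j\}_{j=1}^k$ these edges constitute $M$. Every root is then covered once, by its impurity, and every non-root vertex once, by its parent-edge dimer; and because $u\mapsto z_u$ sends the non-root vertices bijectively onto the forest edges, which in turn biject with the middle vertices, every $z\in V(G)\cap{\cal V}_3$ is covered exactly once. That $(u,z_u)$ is a genuine edge of $E_1$ is immediate from the definition of $G_j$. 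Thus $M\in{\cal D}(G)$.

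Finally, uniqueness. Any dimer covering corresponding to the given triple must use exactly the impurities $\{e_j\}$, so its roots are covered by impurities and every other vertex of $V_1\cup V_2$ is covered by an $E_1$-dimer to an incident middle vertex lying in the same domain; by Proposition \ref{properties of lines} the domains are precisely the subtrees, and the count above shows the number of middle vertices inside a subtree equals its number of edges, i.e.\ one less than its number of corners. Hence, within each tree, the $E_1$-dimers realize a perfect matching between the non-root corners and the edges of that tree under the incidence relation. Such a matching is unique: a non-root leaf is incident to a single edge and is forced to match it, and deleting it reduces to the same problem on a strictly smaller tree, so induction pins down the parent-edge matching. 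Therefore $M$ is the only covering corresponding to the triple, and since this construction reverses the slit-curve passage of Theorem \ref{pairing}, the two maps are mutually inverse. The hard part is precisely the global consistency asserted in the second step — that the locally forced, tree-by-tree matchings assemble into a single covering hitting every middle vertex exactly once — and this is exactly what the edge-count identity together with condition (P)(1) secures.
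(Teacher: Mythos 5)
Your proof is correct and takes essentially the same route as the paper's: restore the middle vertices, root each paired subtree at the endpoint of its impurity, match every non-root vertex to the middle vertex of the edge toward its parent, and get uniqueness by the forced matching at leaves. The only real addition is that you make explicit the global consistency check the paper leaves implicit, namely that condition (P)(1) together with the count $|E(F_1)|+|E(F_2)|=(|V_1|-k)+(|V_2|-k)=mn=|V(G)\cap{\cal V}_3|$ puts forest edges in bijection with middle vertices, so every middle vertex is covered exactly once.
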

\begin{proof}
It clearly suffices to find 
the dimer covering on each subtrees of spanning forests, being regarded as subgraph of 
$G$ 
by adding middle vertices. 
Each subtrees are further divided into a number of subtrees by impurities, 
where the numbers of vertices are always even.
It remains  
to make dimer coverings on each subtrees, 
regarding the impurity as the root. 
\QED
\end{proof}
The proof of 
Theorem \ref{pairing}
is completed.
\begin{remark}
Theorem \ref{pairing} 
also works for graphs which is made by composing the unit cube freely, provided the circumference 
$L$ 
of that satisfies 
$L \in 4 {\bf N} + 2$, 
in which case the number of impurities is equal to 
$\frac {L+2}{2}$ (Figure \ref{fig:2-1-8}). 
\end{remark}
%

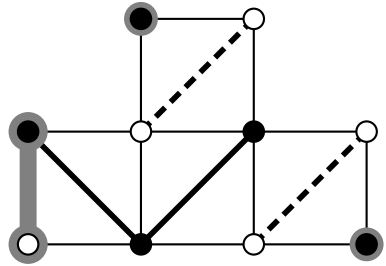
\begin{figure}[H]
 \begin{center}
  \begin{pspicture}(5,3)
   \psset{unit=1.5}
   \psline(0,0)(3,0)   \psline(0,1)(3,1) \psline(1,2)(2,2)
   \psline(0,0)(0,1)\psline(1,0)(1,2)\psline(2,0)(2,2)\psline(3,0)(3,1)
   \psline[linewidth=.05](0,1)(1,0)(2,1)
   \psline[linewidth=.05,linestyle=dashed](1,1)(2,2)
   \psline[linewidth=.05,linestyle=dashed](2,0)(3,1)
   \psline[linewidth=.15,linecolor=gray]{*-*}(0,0)(0,1)
   \pscircle[fillstyle=solid,fillcolor=gray,linecolor=gray](1,2){.15}
   \pscircle[fillstyle=solid,fillcolor=gray,linecolor=gray](3,0){.15}
   \pscircle[fillstyle=solid,fillcolor=white](0,0){.1}
   \pscircle[fillstyle=solid,fillcolor=white](2,0){.1}
   \pscircle[fillstyle=solid,fillcolor=white](1,1){.1}
   \pscircle[fillstyle=solid,fillcolor=white](3,1){.1}
   \pscircle[fillstyle=solid,fillcolor=white](2,2){.1}
   \pscircle[fillstyle=solid,fillcolor=black](1,0){.1}
   \pscircle[fillstyle=solid,fillcolor=black](3,0){.1}
   \pscircle[fillstyle=solid,fillcolor=black](0,1){.1}
   \pscircle[fillstyle=solid,fillcolor=black](2,1){.1}
   \pscircle[fillstyle=solid,fillcolor=black](1,2){.1}
  \end{pspicture}
  \caption{Example of spanning forest in general case}
  \label{fig:2-1-8}
 \end{center}
\end{figure}

%
\subsection{Construction of a bijection : for 
$G^{(k)}$}
In this subsection, 
we set  
$G := G^{(k)}$
and numerate its terminals as 
$T_1$, $T_2$, $\cdots$, $T_k$. 
To state our theorem, 
we need numerous notations which are introduced here. 
As was done for 
$G^{(m,n)}$, 
let 
$G_j$ 
($j=1,2$)
be the graph such that 
$V(G_j) = V_j$, 
and for 
$x,y \in V_j$, 
we set 
$(x,y) \in E(G_j)$ 
iff there is 
$z \in V_3$, 
which we call the middle vertex, with 
$(x,z), (z,y) \in E_1$. 
Figure \ref{fig:2-2-1}  
shows 
$G_1$, $G_2$ 
for the example given in Figure \ref{fig:1-3} (2). 
Putting 
back the middle vertices on 
$G_1$, $G_2$ 
yields subgraphs of 
$G$ 
which we call
$G'_1$, $G'_2$ 
(Figure \ref{fig:2-2-2}).\\
%

\begin{figure}[H]
 \begin{center}
  \begin{pspicture}(6,10)(0,-1)
   \put(0,5){
   \psline(0,0)(1,0)(1,3)\psline(1,1)(4,1)\psline(1,2)(2,2)\psline(2,2)(2,1)
   \pscircle[fillstyle=solid,fillcolor=white](0,0){.17}
   \pscircle[fillstyle=solid,fillcolor=white](0,0){.1}
   \pscircle[fillstyle=solid,fillcolor=white](1,3){.17}
   \pscircle[fillstyle=solid,fillcolor=white](1,3){.1}
   \pscircle[fillstyle=solid,fillcolor=white](1,0){.1}
   \pscircle[fillstyle=solid,fillcolor=white](1,1){.1}
   \pscircle[fillstyle=solid,fillcolor=white](1,2){.1}
   \pscircle[fillstyle=solid,fillcolor=white](2,1){.1}
   \pscircle[fillstyle=solid,fillcolor=white](2,2){.1}
   \pscircle[fillstyle=solid,fillcolor=white](3,1){.1}
   \pscircle[fillstyle=solid,fillcolor=white](4,1){.17}
   \pscircle[fillstyle=solid,fillcolor=white](4,1){.1}
   \put(1.5,-.5){$G_1$}
   }

   \put(.5,-.5){
   \multiput(0,0)(0,1){4}{\pscircle[fillstyle=solid,fillcolor=black](0,0){.1}}
   \multiput(1,0)(0,1){4}{\pscircle[fillstyle=solid,fillcolor=black](0,0){.1}}
   \multiput(2,1)(0,1){3}{\pscircle[fillstyle=solid,fillcolor=black](0,0){.1}}
   \multiput(3,1)(0,1){2}{\pscircle[fillstyle=solid,fillcolor=black](0,0){.1}}
   \psline(0,0)(0,3) \psline(1,0)(1,3) \psline(2,1)(2,3)
   \psline(3,1)(3,2)
   \psline(0,0)(1,0)\psline(0,1)(3,1)\psline(0,2)(3,2)\psline(0,3)(2,3)
   }
   {\psset{linecolor=lightgray}
   \psline(0,0)(1,0)(1,3)\psline(1,1)(4,1)\psline(1,2)(2,2)\psline(2,2)(2,1)
   \pscircle[fillstyle=solid,fillcolor=white](0,0){.17}
   \pscircle[fillstyle=solid,fillcolor=white](0,0){.1}
   \pscircle[fillstyle=solid,fillcolor=white](1,3){.17}
   \pscircle[fillstyle=solid,fillcolor=white](1,3){.1}
   \pscircle[fillstyle=solid,fillcolor=white](1,0){.1}
   \pscircle[fillstyle=solid,fillcolor=white](1,1){.1}
   \pscircle[fillstyle=solid,fillcolor=white](1,2){.1}
   \pscircle[fillstyle=solid,fillcolor=white](2,1){.1}
   \pscircle[fillstyle=solid,fillcolor=white](2,2){.1}
   \pscircle[fillstyle=solid,fillcolor=white](3,1){.1}
   \pscircle[fillstyle=solid,fillcolor=white](4,1){.17}
   \pscircle[fillstyle=solid,fillcolor=white](4,1){.1}
   \put(2,-1){$G_2$}
   }
  \end{pspicture}
  \caption{$G_1,G_2$ corresponding to the example in Figure
  \ref{fig:1-3} $(2)$}
  \label{fig:2-2-1}
 \end{center}
\end{figure}

%
\begin{figure}[H]
 \begin{center}
  \begin{pspicture}(6,10)(0,-1.5)
   \psset{unit=1.5}
   \put(0,5){
   \put(.5,-.5){
   \psset{linecolor=lightgray}
   \multiput(0,0)(0,1){4}{\pscircle[fillstyle=solid,fillcolor=lightgray](0,0){.1}}
   \multiput(1,0)(0,1){4}{\pscircle[fillstyle=solid,fillcolor=lightgray](0,0){.1}}
   \multiput(2,1)(0,1){3}{\pscircle[fillstyle=solid,fillcolor=lightgray](0,0){.1}}
   \multiput(3,1)(0,1){2}{\pscircle[fillstyle=solid,fillcolor=lightgray](0,0){.1}}
   \psline(0,0)(0,3) \psline(1,0)(1,3) \psline(2,1)(2,3)
   \psline(3,1)(3,2)
   \psline(0,0)(1,0)\psline(0,1)(3,1)\psline(0,2)(3,2)\psline(0,3)(2,3)
   }
   \psset{linestyle=dashed}
   \psline(0,0)(1,0)(1,3)\psline(1,1)(4,1)\psline(1,2)(2,2)\psline(2,2)(2,1)
   \psset{linestyle=solid}
   \pscircle[fillstyle=solid,fillcolor=white](0,0){.17}
   \pscircle[fillstyle=solid,fillcolor=white](0,0){.1}
   \pscircle[fillstyle=solid,fillcolor=white](1,3){.17}
   \pscircle[fillstyle=solid,fillcolor=white](1,3){.1}
   \pscircle[fillstyle=solid,fillcolor=white](1,0){.1}
   \pscircle[fillstyle=solid,fillcolor=white](1,1){.1}
   \pscircle[fillstyle=solid,fillcolor=white](1,2){.1}
   \pscircle[fillstyle=solid,fillcolor=white](2,1){.1}
   \pscircle[fillstyle=solid,fillcolor=white](2,2){.1}
   \pscircle[fillstyle=solid,fillcolor=white](3,1){.1}
   \pscircle[fillstyle=solid,fillcolor=white](4,1){.17}
   \pscircle[fillstyle=solid,fillcolor=white](4,1){.1}
   \psdot(0.5,0)\multiput(0,0)(0,1){3}{\psdot(1.,0.5)}
   \multiput(0,0)(1,0){3}{\psdot(1.5,1)}
   \psdot(1.5,2)\psdot(2,1.5)
   \put(1.5,-1.){$G'_1$}
   }

   \put(.5,-.5){
   \multiput(0,0)(0,1){4}{\pscircle[fillstyle=solid,fillcolor=black](0,0){.1}}
   \multiput(1,0)(0,1){4}{\pscircle[fillstyle=solid,fillcolor=black](0,0){.1}}
   \multiput(2,1)(0,1){3}{\pscircle[fillstyle=solid,fillcolor=black](0,0){.1}}
   \multiput(3,1)(0,1){2}{\pscircle[fillstyle=solid,fillcolor=black](0,0){.1}}
   \psset{linestyle=dashed}
   \psline(0,0)(0,3) \psline(1,0)(1,3) \psline(2,1)(2,3)
   \psline(3,1)(3,2)
   \psline(0,0)(1,0)\psline(0,1)(3,1)\psline(0,2)(3,2)\psline(0,3)(2,3)
   }
   {\psset{linecolor=lightgray}
   \psline(0,0)(1,0)(1,3)\psline(1,1)(4,1)\psline(1,2)(2,2)\psline(2,2)(2,1)
   \pscircle[fillstyle=solid,fillcolor=white](0,0){.17}
   \pscircle[fillstyle=solid,fillcolor=white](0,0){.1}
   \pscircle[fillstyle=solid,fillcolor=white](1,3){.17}
   \pscircle[fillstyle=solid,fillcolor=white](1,3){.1}
   \pscircle[fillstyle=solid,fillcolor=white](1,0){.1}
   \pscircle[fillstyle=solid,fillcolor=white](1,1){.1}
   \pscircle[fillstyle=solid,fillcolor=white](1,2){.1}
   \pscircle[fillstyle=solid,fillcolor=white](2,1){.1}
   \pscircle[fillstyle=solid,fillcolor=white](2,2){.1}
   \pscircle[fillstyle=solid,fillcolor=white](3,1){.1}
   \pscircle[fillstyle=solid,fillcolor=white](4,1){.17}
   \pscircle[fillstyle=solid,fillcolor=white](4,1){.1}
   \put(2,-1){$G'_2$}
   }
   \psdot(0.5,0)\multiput(0,0)(0,1){3}{\psdot(1.,0.5)}
   \multiput(-1,0)(1,0){4}{\psdot(1.5,1)}
   \multiput(-1,1)(1,0){3}{\psdot(1.5,1)}
   \multiput(-1,-1)(1,0){2}{\psdot(1.5,1)}
   \multiput(-.5,-1.5)(0,1){3}{\psdot(1.5,1)}
   \multiput(.5,-.5)(0,1){3}{\psdot(1.5,1)}
   \multiput(1.5,-.5)(0,1){2}{\psdot(1.5,1)}

  \end{pspicture}
  \caption{$G'_1,G'_2$ corresponding to the example in Figure
  \ref{fig:1-3} $(2)$}
  \label{fig:2-2-2}
 \end{center}
\end{figure}

Let 
$x, y$ 
be vertices. 
We say that 
$x$ 
is directly connected to 
$y$ 
iff we have the edge 
$e=(x,y)$. 
We call 
$x \in V_3$ 
is a boundary vertex iff 
$x$ 
lies in the boundary of 
$G$. 
Let 
$\overline{G}$ 
be the graph obtained from 
$G$ 
by the following procedure : 
(i) 
add an imaginary vertex 
$R$ 
which we call the root, and 
(ii) 
connect all terminals and boundary vertices directly to 
$R$. 
We call 
edges of the form 
$e=(R,y)$ 
the outer edge. 
$\overline{G}$ 
for the example in Figure \ref{fig:1-3} (2) is given in Figure \ref{fig:2-2-3}.
\\
%

\begin{figure}[H]
 \begin{center}
  \begin{pspicture}(6,9)(0,-2.5)
   \psset{unit=2}
   \pscurve(0,0)(-1,1)(-1,4)(5,5)
   \pscurve(0.5,1)(-.5,1.5)(-.5,3.5)(5,5)
   \pscurve(0.5,2)(-.0,2.5)(-.0,3.)(5,5)
   \pscurve(1,3)(2,3.8)(5,5)
   \pscurve(2,2.5)(3,3.8)(5,5)
   \pscurve(2.5,2.)(4,3.8)(5,5)
   \pscurve(3,1.5)(4,3.)(5,5)
   \pscurve(4,1.)(4.7,3.)(5,5)
   \pscurve(3,.5)(3.5,0)(4.5,.5)(5,5)
   \pscurve(2,.5)(3,-0.5)(5,.5)(5,5)
   \pscurve(1.5,0)(3,-0.8)(5.5,.0)(5,5)
   \pscurve(1.,-0.5)(2,-1)(6,-0.5)(5,5)
   \pscircle[fillstyle=solid,fillcolor=white,linecolor=white](5,5){.2}
   \put(4.9,4.95){\large$\bigstar$}
   \put(.5,-.5){
   \psline(0,0)(2,2)(3,1)(3.5,1.5)(3,2)(2,1)(0,3)(.5,3.5)(1,3)(0,2)(1,1)}
   \psline(0,0)(2.5,2.5)
   \psline(1.5,2.5)(2.5,1.5)
   \psline(.5,.5)(1.5,-.5)
   \psline(0,0)(.5,-.5)
   \put(.5,-.5){
   \multiput(0,0)(0,1){4}{\pscircle[fillstyle=solid,fillcolor=black](0,0){.1}}
   \multiput(1,0)(0,1){4}{\pscircle[fillstyle=solid,fillcolor=black](0,0){.1}}
   \multiput(2,1)(0,1){3}{\pscircle[fillstyle=solid,fillcolor=black](0,0){.1}}
   \multiput(3,1)(0,1){2}{\pscircle[fillstyle=solid,fillcolor=black](0,0){.1}}
   \psset{linestyle=dashed}
   \psline(0,0)(0,3) \psline(1,0)(1,3) \psline(2,1)(2,3)
   \psline(3,1)(3,2)
   \psline(0,0)(1,0)\psline(0,1)(3,1)\psline(0,2)(3,2)\psline(0,3)(2,3)
   }
   {
   \psset{linestyle=dashed}
   \psline(0,0)(1.5,0)\psline(1,-.5)(1,3)\psline(0.5,1)(4,1)\psline(.5,2)(2.5,2)\psline(2,2.5)(2,.5)
   \psline(3,.5)(3,1.5)
   \psset{linestyle=solid}
   \pscircle[fillstyle=solid,fillcolor=white](0,0){.1}
   \pscircle[fillstyle=solid,fillcolor=white](0,0){.05}
   \pscircle[fillstyle=solid,fillcolor=white](1,3){.1}
   \pscircle[fillstyle=solid,fillcolor=white](1,3){.05}
   \pscircle[fillstyle=solid,fillcolor=white](1,0){.1}
   \pscircle[fillstyle=solid,fillcolor=white](1,1){.1}
   \pscircle[fillstyle=solid,fillcolor=white](1,2){.1}
   \pscircle[fillstyle=solid,fillcolor=white](2,1){.1}
   \pscircle[fillstyle=solid,fillcolor=white](2,2){.1}
   \pscircle[fillstyle=solid,fillcolor=white](3,1){.1}
   \pscircle[fillstyle=solid,fillcolor=white](4,1){.1}
   \pscircle[fillstyle=solid,fillcolor=white](4,1){.05}
   \put(2,-1){$\overline{G}$}
   }
   \psdot(0.5,0)\multiput(0,0)(0,1){3}{\psdot(1.,0.5)}
   \multiput(-1,0)(1,0){4}{\psdot(1.5,1)}
   \multiput(-1,1)(1,0){3}{\psdot(1.5,1)}
   \multiput(-1,-1)(1,0){2}{\psdot(1.5,1)}
   \multiput(-.5,-1.5)(0,1){3}{\psdot(1.5,1)}
   \multiput(.5,-.5)(0,1){3}{\psdot(1.5,1)}
   \multiput(1.5,-.5)(0,1){2}{\psdot(1.5,1)}

  \end{pspicture}
  \caption{$\overline{G}$ for the example in Figure $\ref{fig:1-3}$-(2)}
  \label{fig:2-2-3}
 \end{center}
\end{figure}
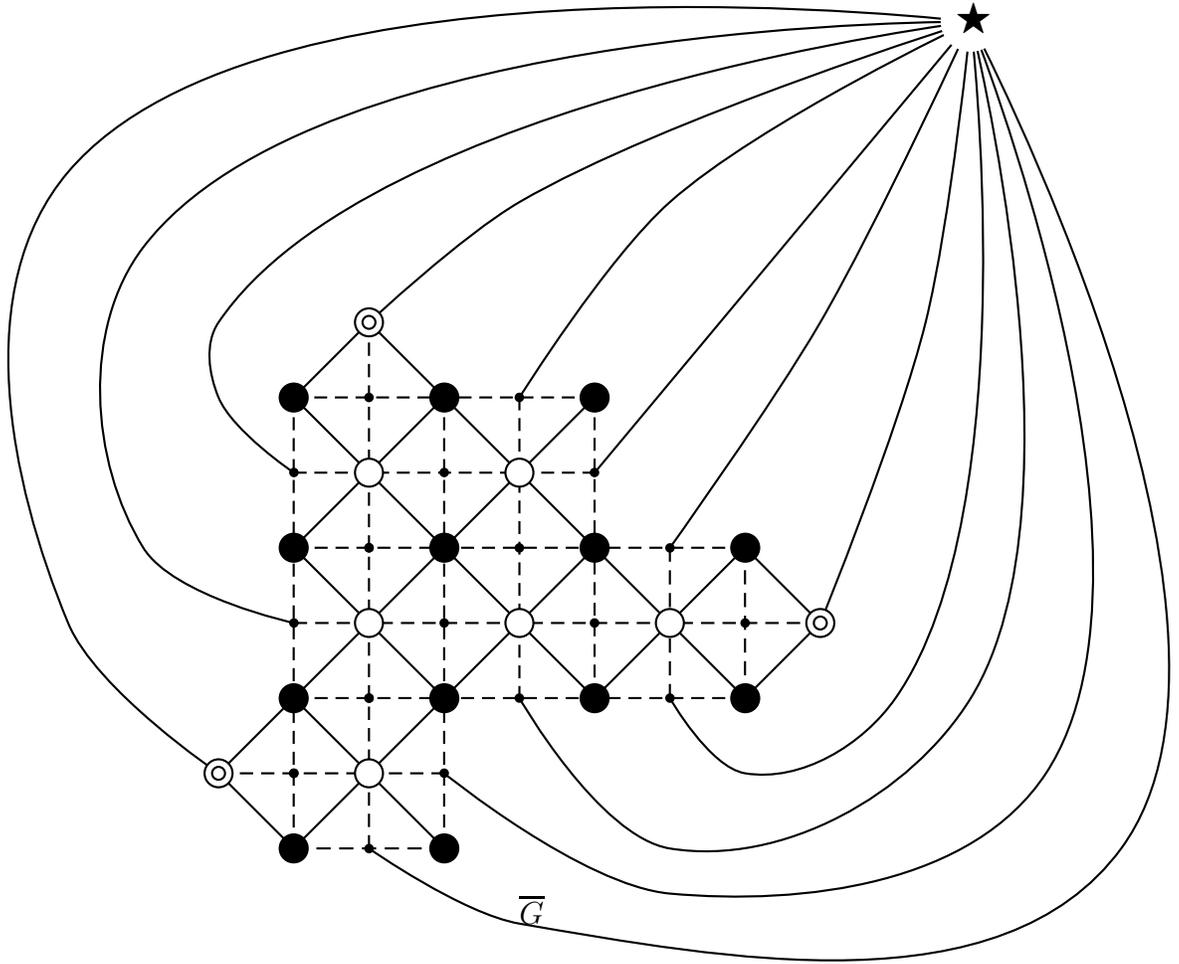

Let 
$\overline{G_1}$ 
be the graph such that 
$V(\overline{G_1}) = V_1 \cup \{ R \}$ 
and for 
$x,y \in V(\overline{G_1})$, 
we set 
$(x,y) \in E(\overline{G_1})$ 
iff 
$x=R$, $y \in \{ T_j \}_{j=1}^k$, 
or there is 
$z \in V_3$ 
with 
$(x,z), (z,y) \in E(\overline{G})$. 
$\overline{G_1}$ 
for the example in Figure \ref{fig:1-3} (2) is given in Figure \ref{fig:2-2-4}. 
Putting back middle vertices on 
$\overline{G_1}$ 
yields a sugraph 
$\overline{G'_1}$ 
of 
$\overline{G}$ (Figure \ref{fig:2-2-5}). \\
%

\begin{figure}[H]
 \begin{center}
  \begin{pspicture}(6,10)(0,-2.5)
   \psset{unit=2}
   \pscurve(0,0)(-1,1)(-1,4)(5,5)
   \pscurve(1,1)(-.5,1.5)(-.5,3.5)(5,5)
   \pscurve(1,2)(-.0,2.5)(-.0,3.)(5,5)
   \pscurve(1,3)(2,3.8)(5,5)
   \pscurve(2,2.)(3,3.8)(5,5)
   \pscurve(2.,2.)(4,3.8)(5,5)
   \pscurve(3,1.)(4,3.)(5,5)
   \pscurve(4,1.)(4.7,3.)(5,5)
   \pscurve(3,1)(3.5,0)(4.5,.5)(5,5)
   \pscurve(2,1)(3,-0.5)(5,.5)(5,5)
   \pscurve(1,0)(3,-0.8)(5.5,.0)(5,5)
   \pscurve(1,0)(2,-1)(6,-0.5)(5,5)
   \pscircle[fillstyle=solid,fillcolor=white,linecolor=white](5,5){.2}
   \put(4.9,4.95){\large$\bigstar$}
   \put(.5,-.5){
   \psset{linestyle=dashed}

   }
   {
   \psset{linestyle=solid}
   \psline(0,0)(1.,0)
   \psline(1,0)(1,3)
   \psline(1,1)(4,1)\psline(1,2)(2,2)
   \psline(2,2)(2,1)
   \psset{linestyle=solid}
   \pscircle[fillstyle=solid,fillcolor=white](0,0){.1}
   \pscircle[fillstyle=solid,fillcolor=white](0,0){.05}
   \pscircle[fillstyle=solid,fillcolor=white](1,3){.1}
   \pscircle[fillstyle=solid,fillcolor=white](1,3){.05}
   \pscircle[fillstyle=solid,fillcolor=white](1,0){.1}
   \pscircle[fillstyle=solid,fillcolor=white](1,1){.1}
   \pscircle[fillstyle=solid,fillcolor=white](1,2){.1}
   \pscircle[fillstyle=solid,fillcolor=white](2,1){.1}
   \pscircle[fillstyle=solid,fillcolor=white](2,2){.1}
   \pscircle[fillstyle=solid,fillcolor=white](3,1){.1}
   \pscircle[fillstyle=solid,fillcolor=white](4,1){.1}
   \pscircle[fillstyle=solid,fillcolor=white](4,1){.05}
   \put(2,-1){}
   }

  \end{pspicture}
  \caption{$\overline{G_1}$
for the example in Figure $\ref{fig:1-3}$-(2)}
  \label{fig:2-2-4}
 \end{center}
\end{figure}


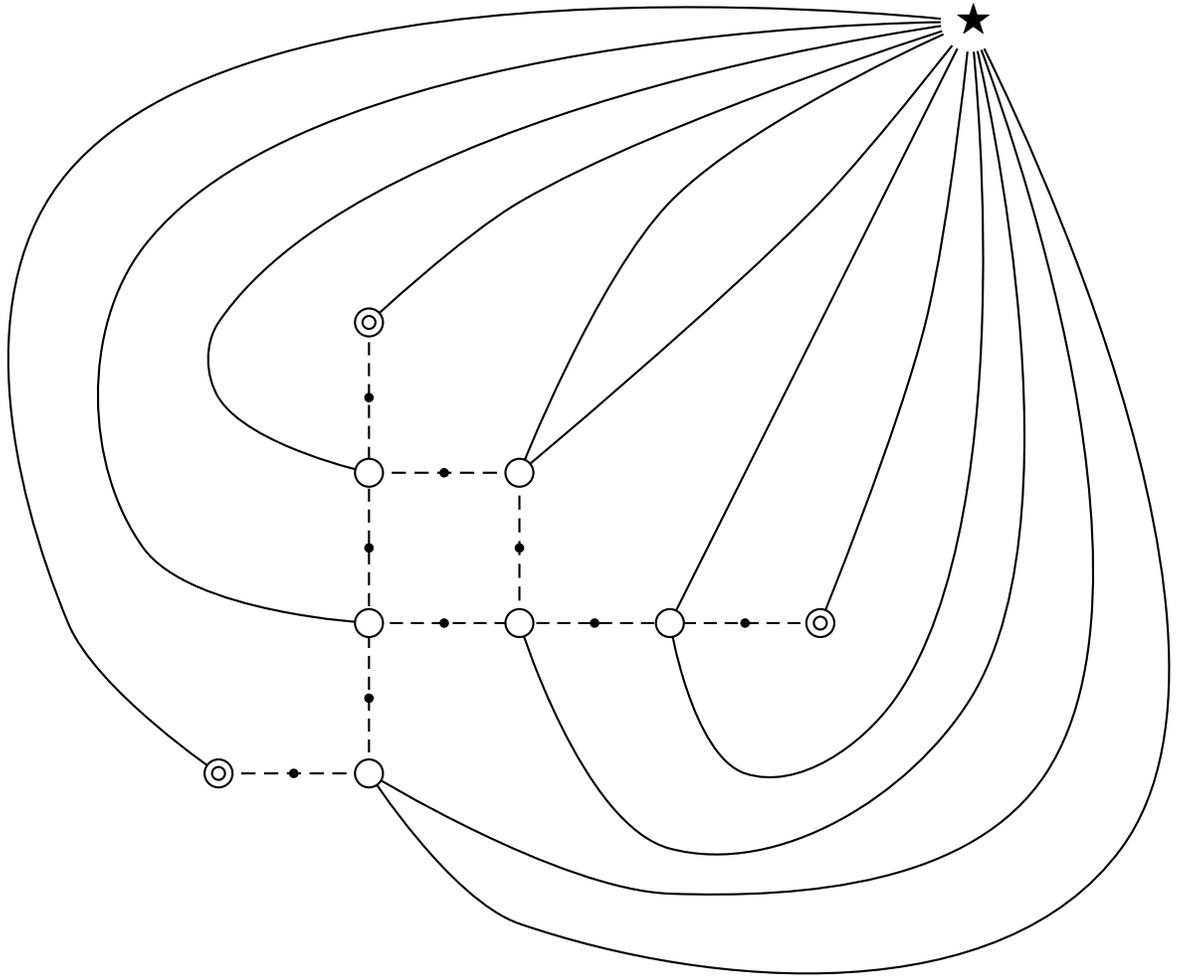
\begin{figure}[H]
 \begin{center}
  \begin{pspicture}(6,10)(0,-2.5)
   \psset{unit=2}
   \pscurve(0,0)(-1,1)(-1,4)(5,5)
   \pscurve(1,1)(-.5,1.5)(-.5,3.5)(5,5)
   \pscurve(1,2)(-.0,2.5)(-.0,3.)(5,5)
   \pscurve(1,3)(2,3.8)(5,5)
   \pscurve(2,2.)(3,3.8)(5,5)
   \pscurve(2.,2.)(4,3.8)(5,5)
   \pscurve(3,1.)(4,3.)(5,5)
   \pscurve(4,1.)(4.7,3.)(5,5)
   \pscurve(3,1)(3.5,0)(4.5,.5)(5,5)
   \pscurve(2,1)(3,-0.5)(5,.5)(5,5)
   \pscurve(1,0)(3,-0.8)(5.5,.0)(5,5)
   \pscurve(1,0)(2,-1)(6,-0.5)(5,5)
   \pscircle[fillstyle=solid,fillcolor=white,linecolor=white](5,5){.2}
   \put(4.9,4.95){\large$\bigstar$}
   \put(.5,-.5){
   \psset{linestyle=dashed}

   }
   {
   \psset{linestyle=dashed}
   \psline(0,0)(1.,0)
   \psline(1,0)(1,3)
   \psline(1,1)(4,1)\psline(1,2)(2,2)
   \psline(2,2)(2,1)
   \psdot(0.5,0)\psdot(1,0.5)\psdot(1,1.5)\psdot(1,2.5)
   \psdot(1.5,1)\psdot(2.5,1)\psdot(3.5,1)
   \psdot(2,1.5)\psdot(1.5,2)
   \psset{linestyle=solid}
   \pscircle[fillstyle=solid,fillcolor=white](0,0){.1}
   \pscircle[fillstyle=solid,fillcolor=white](0,0){.05}
   \pscircle[fillstyle=solid,fillcolor=white](1,3){.1}
   \pscircle[fillstyle=solid,fillcolor=white](1,3){.05}
   \pscircle[fillstyle=solid,fillcolor=white](1,0){.1}
   \pscircle[fillstyle=solid,fillcolor=white](1,1){.1}
   \pscircle[fillstyle=solid,fillcolor=white](1,2){.1}
   \pscircle[fillstyle=solid,fillcolor=white](2,1){.1}
   \pscircle[fillstyle=solid,fillcolor=white](2,2){.1}
   \pscircle[fillstyle=solid,fillcolor=white](3,1){.1}
   \pscircle[fillstyle=solid,fillcolor=white](4,1){.1}
   \pscircle[fillstyle=solid,fillcolor=white](4,1){.05}
   \put(2,-1){}
   }

  \end{pspicture}
  \caption{$\overline{G'_1}$
for the example in Figure $\ref{fig:1-3}$-(2)}
  \label{fig:2-2-5}
 \end{center}
\end{figure}

\noindent
For a subgraph 
$A (\subset \overline{G_1})$ 
of 
$\overline{G_1}$, 
its edge 
$e=(x,y) \in E(A)$ 
contains vertex of 
$V_3$ 
at its middle 
(except 
$e$ 
connects a terminal and 
$R$). 
Adding such middle vertices 
yields a subgraph 
$A'$ 
of 
$\overline{G'_1}$. 
We always identify 
$A$ 
with 
$A'$ 
and thus regard 
$A$ 
as a subgraph of 
$\overline{G'_1}$. 
Conversely, 
for a subgraph 
$A' (\subset \overline{G'_1})$ 
of 
$\overline{G'_1}$, 
ignoring middle vertices from   
$A'$ 
yields a subgraph 
$A$ 
of 
$\overline{G_1}$.
Similarly, 
cutting outer edges from   
$A'$, 
we obtain a subgraph 
$\tilde{A}$ 
of 
$G'_1$. 
In both cases, 
we identify 
$A'$ 
also with 
$A$ 
or
$\tilde{A}$,  
and regard 
$A$ 
also as a subgraph of 
$\overline{G_1}$
or 
$G'_1$. 
\\

To state an analogue of 
Theorem \ref{pairing}, 
we further need some notations. 
Let 
$A (\subset \overline{G'_1})$
be a subgraph of 
$\overline{G'_1}$. 
\\ 

\noindent
(1)
We say that 
$A$ 
is a TI-domain iff 
(i) $A$ contains a terminal which is unique and connected directly to 
$R$, 
and 
(ii) $A$ has no boundary vertices (Figure \ref{fig:2-2-6}). 
We also call 
$A$ 
is a TI-tree if it is a tree (and so for other ones below). 
\\
%
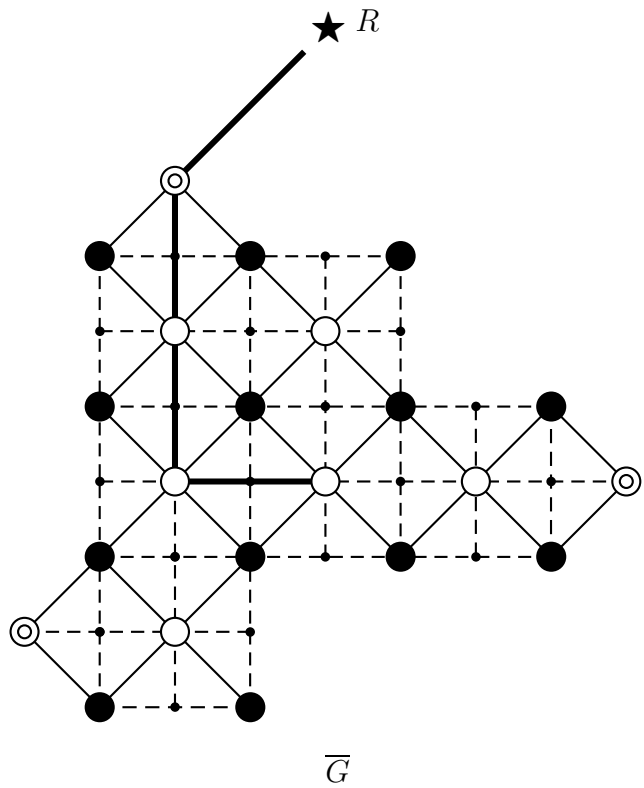
\begin{figure}[H]
 \begin{center}
  \begin{pspicture}(6,7)(0,-2.5)
   \psset{unit=2}
   \put(2.2,4){$R$}
   \psline[linewidth=0.04](2,4)(1,3)(1,1)(2,1)
   \pscircle[fillstyle=solid,fillcolor=white,linecolor=white](2,4){.2}
   \put(1.9,3.95){\large$\bigstar$}
   \put(.5,-.5){
   \psline(0,0)(2,2)(3,1)(3.5,1.5)(3,2)(2,1)(0,3)(.5,3.5)(1,3)(0,2)(1,1)}
   \psline(0,0)(2.5,2.5)
   \psline(1.5,2.5)(2.5,1.5)
   \psline(.5,.5)(1.5,-.5)
   \psline(0,0)(.5,-.5)
   \put(.5,-.5){
   \multiput(0,0)(0,1){4}{\pscircle[fillstyle=solid,fillcolor=black](0,0){.1}}
   \multiput(1,0)(0,1){4}{\pscircle[fillstyle=solid,fillcolor=black](0,0){.1}}
   \multiput(2,1)(0,1){3}{\pscircle[fillstyle=solid,fillcolor=black](0,0){.1}}
   \multiput(3,1)(0,1){2}{\pscircle[fillstyle=solid,fillcolor=black](0,0){.1}}
   \psset{linestyle=dashed}
   \psline(0,0)(0,3) \psline(1,0)(1,3) \psline(2,1)(2,3)
   \psline(3,1)(3,2)
   \psline(0,0)(1,0)\psline(0,1)(3,1)\psline(0,2)(3,2)\psline(0,3)(2,3)
   }
   {
   \psset{linestyle=dashed}
   \psline(0,0)(1.5,0)\psline(1,-.5)(1,3)\psline(0.5,1)(4,1)\psline(.5,2)(2.5,2)\psline(2,2.5)(2,.5)
   \psline(3,.5)(3,1.5)
   \psset{linestyle=solid}
   \pscircle[fillstyle=solid,fillcolor=white](0,0){.1}
   \pscircle[fillstyle=solid,fillcolor=white](0,0){.05}
   \pscircle[fillstyle=solid,fillcolor=white](1,3){.1}
   \pscircle[fillstyle=solid,fillcolor=white](1,3){.05}
   \pscircle[fillstyle=solid,fillcolor=white](1,0){.1}
   \pscircle[fillstyle=solid,fillcolor=white](1,1){.1}
   \pscircle[fillstyle=solid,fillcolor=white](1,2){.1}
   \pscircle[fillstyle=solid,fillcolor=white](2,1){.1}
   \pscircle[fillstyle=solid,fillcolor=white](2,2){.1}
   \pscircle[fillstyle=solid,fillcolor=white](3,1){.1}
   \pscircle[fillstyle=solid,fillcolor=white](4,1){.1}
   \pscircle[fillstyle=solid,fillcolor=white](4,1){.05}
   \put(2,-1){$\overline{G}$}
   }
   \psdot(0.5,0)\multiput(0,0)(0,1){3}{\psdot(1.,0.5)}
   \multiput(-1,0)(1,0){4}{\psdot(1.5,1)}
   \multiput(-1,1)(1,0){3}{\psdot(1.5,1)}
   \multiput(-1,-1)(1,0){2}{\psdot(1.5,1)}
   \multiput(-.5,-1.5)(0,1){3}{\psdot(1.5,1)}
   \multiput(.5,-.5)(0,1){3}{\psdot(1.5,1)}
   \multiput(1.5,-.5)(0,1){2}{\psdot(1.5,1)}

  \end{pspicture}
  \caption{An example of TI-domain}
  \label{fig:2-2-6}
 \end{center}
\end{figure}

\noindent
(2)
We say that 
$A$ 
is a TO-domain iff 
(i) $A$ contains a terminal which is unique and is not connected directly to 
$R$, 
and 
(ii) $A$ 
has boundary vertices all of which are connected directly to 
$R$ 
(Figure \ref{fig:2-2-7}). 
\\
%

\begin{figure}[H]
 \begin{center}
  \begin{pspicture}(6,7)(0,-2.5)
   \psset{unit=2}
   \put(2.7,-.5){$R$}
   \psline[linewidth=0.04](1,3)(1,1)(2,1)(2,.5)(2.5,-.5)
   \pscircle[fillstyle=solid,fillcolor=white,linecolor=white](2.5,-.5){.2}
   \put(2.4,-.55){\large$\bigstar$}
   \put(.5,-.5){
   \psline(0,0)(2,2)(3,1)(3.5,1.5)(3,2)(2,1)(0,3)(.5,3.5)(1,3)(0,2)(1,1)}
   \psline(0,0)(2.5,2.5)
   \psline(1.5,2.5)(2.5,1.5)
   \psline(.5,.5)(1.5,-.5)
   \psline(0,0)(.5,-.5)
   \put(.5,-.5){
   \multiput(0,0)(0,1){4}{\pscircle[fillstyle=solid,fillcolor=black](0,0){.1}}
   \multiput(1,0)(0,1){4}{\pscircle[fillstyle=solid,fillcolor=black](0,0){.1}}
   \multiput(2,1)(0,1){3}{\pscircle[fillstyle=solid,fillcolor=black](0,0){.1}}
   \multiput(3,1)(0,1){2}{\pscircle[fillstyle=solid,fillcolor=black](0,0){.1}}
   \psset{linestyle=dashed}
   \psline(0,0)(0,3) \psline(1,0)(1,3) \psline(2,1)(2,3)
   \psline(3,1)(3,2)
   \psline(0,0)(1,0)\psline(0,1)(3,1)\psline(0,2)(3,2)\psline(0,3)(2,3)
   }
   {
   \psset{linestyle=dashed}
   \psline(0,0)(1.5,0)\psline(1,-.5)(1,3)\psline(0.5,1)(4,1)\psline(.5,2)(2.5,2)\psline(2,2.5)(2,.5)
   \psline(3,.5)(3,1.5)
   \psset{linestyle=solid}
   \pscircle[fillstyle=solid,fillcolor=white](0,0){.1}
   \pscircle[fillstyle=solid,fillcolor=white](0,0){.05}
   \pscircle[fillstyle=solid,fillcolor=white](1,3){.1}
   \pscircle[fillstyle=solid,fillcolor=white](1,3){.05}
   \pscircle[fillstyle=solid,fillcolor=white](1,0){.1}
   \pscircle[fillstyle=solid,fillcolor=white](1,1){.1}
   \pscircle[fillstyle=solid,fillcolor=white](1,2){.1}
   \pscircle[fillstyle=solid,fillcolor=white](2,1){.1}
   \pscircle[fillstyle=solid,fillcolor=white](2,2){.1}
   \pscircle[fillstyle=solid,fillcolor=white](3,1){.1}
   \pscircle[fillstyle=solid,fillcolor=white](4,1){.1}
   \pscircle[fillstyle=solid,fillcolor=white](4,1){.05}
   }
   \psdot(0.5,0)\multiput(0,0)(0,1){3}{\psdot(1.,0.5)}
   \multiput(-1,0)(1,0){4}{\psdot(1.5,1)}
   \multiput(-1,1)(1,0){3}{\psdot(1.5,1)}
   \multiput(-1,-1)(1,0){2}{\psdot(1.5,1)}
   \multiput(-.5,-1.5)(0,1){3}{\psdot(1.5,1)}
   \multiput(.5,-.5)(0,1){3}{\psdot(1.5,1)}
   \multiput(1.5,-.5)(0,1){2}{\psdot(1.5,1)}

  \end{pspicture}
  \caption{An example of TO-domain}
  \label{fig:2-2-7}
 \end{center}
\end{figure}
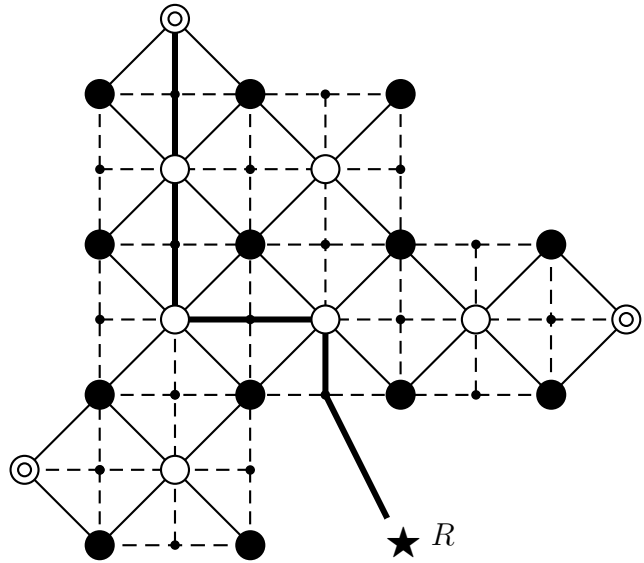

\noindent
(3)
We say that 
$A$ 
is a IO-domain iff 
(i) $A$ is not connected to terminals, and 
(ii) $A$ has boundary vertices which are connected directly to 
$R$ 
(Figure \ref{fig:2-2-8}). \\
%

\begin{figure}[H]
 \begin{center}
  \begin{pspicture}(6,7)(0,-2.5)
   \psset{unit=2}
   \put(2.7,-.5){$R$}
   \psline[linewidth=0.04](1,2)(1,1)(2,1)(2,.5)(2.5,-.5)
   \pscircle[fillstyle=solid,fillcolor=white,linecolor=white](2.5,-.5){.2}
   \put(2.4,-.55){\large$\bigstar$}
   \put(.5,-.5){
   \psline(0,0)(2,2)(3,1)(3.5,1.5)(3,2)(2,1)(0,3)(.5,3.5)(1,3)(0,2)(1,1)}
   \psline(0,0)(2.5,2.5)
   \psline(1.5,2.5)(2.5,1.5)
   \psline(.5,.5)(1.5,-.5)
   \psline(0,0)(.5,-.5)
   \put(.5,-.5){
   \multiput(0,0)(0,1){4}{\pscircle[fillstyle=solid,fillcolor=black](0,0){.1}}
   \multiput(1,0)(0,1){4}{\pscircle[fillstyle=solid,fillcolor=black](0,0){.1}}
   \multiput(2,1)(0,1){3}{\pscircle[fillstyle=solid,fillcolor=black](0,0){.1}}
   \multiput(3,1)(0,1){2}{\pscircle[fillstyle=solid,fillcolor=black](0,0){.1}}
   \psset{linestyle=dashed}
   \psline(0,0)(0,3) \psline(1,0)(1,3) \psline(2,1)(2,3)
   \psline(3,1)(3,2)
   \psline(0,0)(1,0)\psline(0,1)(3,1)\psline(0,2)(3,2)\psline(0,3)(2,3)
   }
   {
   \psset{linestyle=dashed}
   \psline(0,0)(1.5,0)\psline(1,-.5)(1,3)\psline(0.5,1)(4,1)\psline(.5,2)(2.5,2)\psline(2,2.5)(2,.5)
   \psline(3,.5)(3,1.5)
   \psset{linestyle=solid}
   \pscircle[fillstyle=solid,fillcolor=white](0,0){.1}
   \pscircle[fillstyle=solid,fillcolor=white](0,0){.05}
   \pscircle[fillstyle=solid,fillcolor=white](1,3){.1}
   \pscircle[fillstyle=solid,fillcolor=white](1,3){.05}
   \pscircle[fillstyle=solid,fillcolor=white](1,0){.1}
   \pscircle[fillstyle=solid,fillcolor=white](1,1){.1}
   \pscircle[fillstyle=solid,fillcolor=white](1,2){.1}
   \pscircle[fillstyle=solid,fillcolor=white](2,1){.1}
   \pscircle[fillstyle=solid,fillcolor=white](2,2){.1}
   \pscircle[fillstyle=solid,fillcolor=white](3,1){.1}
   \pscircle[fillstyle=solid,fillcolor=white](4,1){.1}
   \pscircle[fillstyle=solid,fillcolor=white](4,1){.05}
   }
   \psdot(0.5,0)\multiput(0,0)(0,1){3}{\psdot(1.,0.5)}
   \multiput(-1,0)(1,0){4}{\psdot(1.5,1)}
   \multiput(-1,1)(1,0){3}{\psdot(1.5,1)}
   \multiput(-1,-1)(1,0){2}{\psdot(1.5,1)}
   \multiput(-.5,-1.5)(0,1){3}{\psdot(1.5,1)}
   \multiput(.5,-.5)(0,1){3}{\psdot(1.5,1)}
   \multiput(1.5,-.5)(0,1){2}{\psdot(1.5,1)}

  \end{pspicture}
  \caption{An example of IO-domain}
  \label{fig:2-2-8}
 \end{center}
\end{figure}
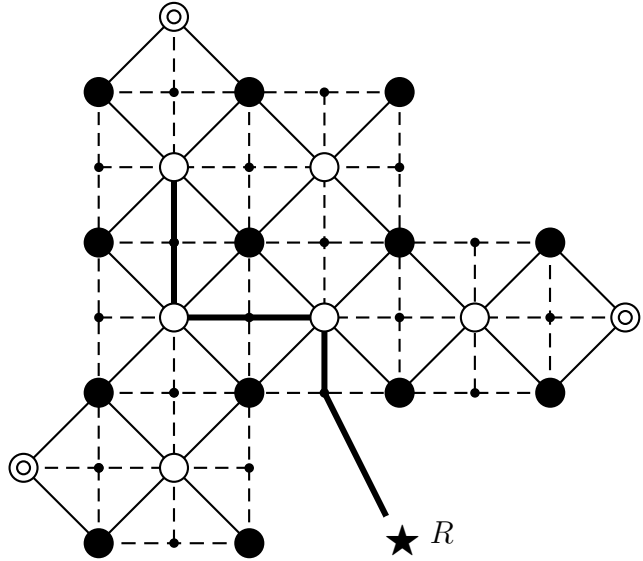

\noindent
(4)
We say that 
$A$ 
is a 
$T^l I$-domain iff 
(i) $A$ contains 
$l$-terminals 
$T_{i_1}, T_{i_2}, \cdots, T_{i_l}$, 
$i_1 < i_2 < \cdots < i_k$ 
among which only 
$T_{i_1}$ 
is connected directly to  
$R$, 
and  
(ii) $A$ has no boundary vertices. 
We regard it as a composition of a TI-domain and 
$(l-1)$ TO-domains (Figure \ref{fig:2-2-9}). 
\\
%

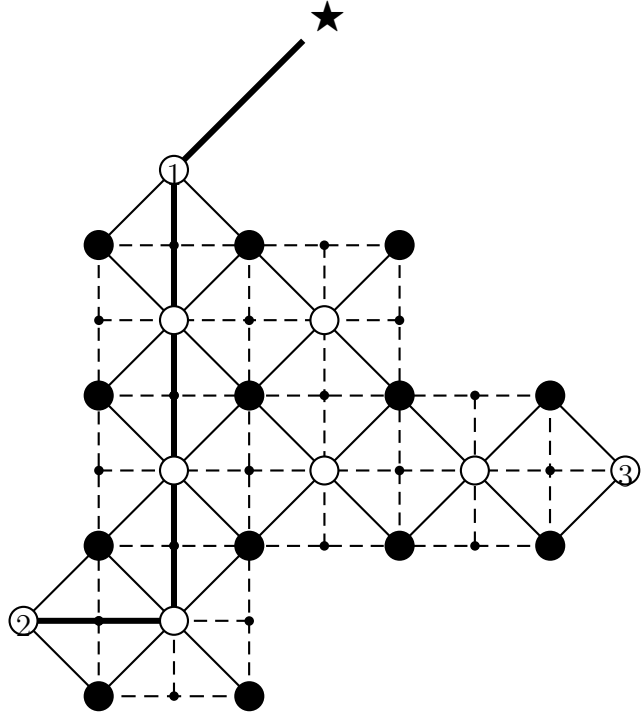
\begin{figure}[H]
 \begin{center}
  \begin{pspicture}(6,7)(0,-2.5)
   \psset{unit=2}
   \psline[linewidth=0.04](2,4)(1,3)(1,0)(0,0)
   \pscircle[fillstyle=solid,fillcolor=white,linecolor=white](2,4){.2}
   \put(1.9,3.95){\large$\bigstar$}
   \put(.5,-.5){
   \psline(0,0)(2,2)(3,1)(3.5,1.5)(3,2)(2,1)(0,3)(.5,3.5)(1,3)(0,2)(1,1)}
   \psline(0,0)(2.5,2.5)
   \psline(1.5,2.5)(2.5,1.5)
   \psline(.5,.5)(1.5,-.5)
   \psline(0,0)(.5,-.5)
   \put(.5,-.5){
   \multiput(0,0)(0,1){4}{\pscircle[fillstyle=solid,fillcolor=black](0,0){.1}}
   \multiput(1,0)(0,1){4}{\pscircle[fillstyle=solid,fillcolor=black](0,0){.1}}
   \multiput(2,1)(0,1){3}{\pscircle[fillstyle=solid,fillcolor=black](0,0){.1}}
   \multiput(3,1)(0,1){2}{\pscircle[fillstyle=solid,fillcolor=black](0,0){.1}}
   \psset{linestyle=dashed}
   \psline(0,0)(0,3) \psline(1,0)(1,3) \psline(2,1)(2,3)
   \psline(3,1)(3,2)
   \psline(0,0)(1,0)\psline(0,1)(3,1)\psline(0,2)(3,2)\psline(0,3)(2,3)
   }
   {
   \psset{linestyle=dashed}
   \psline(0,0)(1.5,0)\psline(1,-.5)(1,3)\psline(0.5,1)(4,1)\psline(.5,2)(2.5,2)\psline(2,2.5)(2,.5)
   \psline(3,.5)(3,1.5)
   \psset{linestyle=solid}
   \pscircle[fillstyle=solid,fillcolor=white](0,0){.1}
   \put(-.05,-.1){$2$}
   \pscircle[fillstyle=solid,fillcolor=white](1,3){.1}
   \put(.95,2.9){$1$}
   \pscircle[fillstyle=solid,fillcolor=white](1,0){.1}
   \pscircle[fillstyle=solid,fillcolor=white](1,1){.1}
   \pscircle[fillstyle=solid,fillcolor=white](1,2){.1}
   \pscircle[fillstyle=solid,fillcolor=white](2,1){.1}
   \pscircle[fillstyle=solid,fillcolor=white](2,2){.1}
   \pscircle[fillstyle=solid,fillcolor=white](3,1){.1}
   \pscircle[fillstyle=solid,fillcolor=white](4,1){.1}
   \put(3.95,.9){$3$}
   }
   \psdot(0.5,0)\multiput(0,0)(0,1){3}{\psdot(1.,0.5)}
   \multiput(-1,0)(1,0){4}{\psdot(1.5,1)}
   \multiput(-1,1)(1,0){3}{\psdot(1.5,1)}
   \multiput(-1,-1)(1,0){2}{\psdot(1.5,1)}
   \multiput(-.5,-1.5)(0,1){3}{\psdot(1.5,1)}
   \multiput(.5,-.5)(0,1){3}{\psdot(1.5,1)}
   \multiput(1.5,-.5)(0,1){2}{\psdot(1.5,1)}

  \end{pspicture}
  \caption{an example of $T^2I$-domain.
  Terminals with smallest index is connected directly to $R$.}
  \label{fig:2-2-9}
 \end{center}
\end{figure}

\noindent
(5) 
We say that 
$A$ 
is a 
$T^l$ O-domain iff 
(i) $A$ contains $l$ terminals 
$T_{i_1}$, $T_{i_2}$, $\cdots$, $T_{i_l}$ 
all of which are not connected directly to 
$R$, and 
(ii) 
$A$ 
has boundary vertices all of which are connected directly to $R$. 
We regard that a 
$T^l$ O-domain 
is a composition of $l$ TO-domains (Figure \ref{fig:2-2-10}). \\
%

\begin{figure}[H]
 \begin{center}
  \begin{pspicture}(6,7)(0,-2.5)
   \psset{unit=2}
   \psline[linewidth=0.04](1,3)(1,0)(0,0)
   \psline[linewidth=0.04](1,1)(2,1)(2,.5)(2.5,-.5)
   \pscircle[fillstyle=solid,fillcolor=white,linecolor=white](2.5,-.5){.2}
   \put(2.4,-.55){\large$\bigstar$}
   \put(.5,-.5){
   \psline(0,0)(2,2)(3,1)(3.5,1.5)(3,2)(2,1)(0,3)(.5,3.5)(1,3)(0,2)(1,1)}
   \psline(0,0)(2.5,2.5)
   \psline(1.5,2.5)(2.5,1.5)
   \psline(.5,.5)(1.5,-.5)
   \psline(0,0)(.5,-.5)
   \put(.5,-.5){
   \multiput(0,0)(0,1){4}{\pscircle[fillstyle=solid,fillcolor=black](0,0){.1}}
   \multiput(1,0)(0,1){4}{\pscircle[fillstyle=solid,fillcolor=black](0,0){.1}}
   \multiput(2,1)(0,1){3}{\pscircle[fillstyle=solid,fillcolor=black](0,0){.1}}
   \multiput(3,1)(0,1){2}{\pscircle[fillstyle=solid,fillcolor=black](0,0){.1}}
   \psset{linestyle=dashed}
   \psline(0,0)(0,3) \psline(1,0)(1,3) \psline(2,1)(2,3)
   \psline(3,1)(3,2)
   \psline(0,0)(1,0)\psline(0,1)(3,1)\psline(0,2)(3,2)\psline(0,3)(2,3)
   }
   {
   \psset{linestyle=dashed}
   \psline(0,0)(1.5,0)\psline(1,-.5)(1,3)\psline(0.5,1)(4,1)\psline(.5,2)(2.5,2)\psline(2,2.5)(2,.5)
   \psline(3,.5)(3,1.5)
   \psset{linestyle=solid}
   \pscircle[fillstyle=solid,fillcolor=white](0,0){.1}
   \pscircle[fillstyle=solid,fillcolor=white](0,0){.05}
   \pscircle[fillstyle=solid,fillcolor=white](1,3){.1}
   \pscircle[fillstyle=solid,fillcolor=white](1,3){.05}
   \pscircle[fillstyle=solid,fillcolor=white](1,0){.1}
   \pscircle[fillstyle=solid,fillcolor=white](1,1){.1}
   \pscircle[fillstyle=solid,fillcolor=white](1,2){.1}
   \pscircle[fillstyle=solid,fillcolor=white](2,1){.1}
   \pscircle[fillstyle=solid,fillcolor=white](2,2){.1}
   \pscircle[fillstyle=solid,fillcolor=white](3,1){.1}
   \pscircle[fillstyle=solid,fillcolor=white](4,1){.1}
   \pscircle[fillstyle=solid,fillcolor=white](4,1){.05}
   }
   \psdot(0.5,0)\multiput(0,0)(0,1){3}{\psdot(1.,0.5)}
   \multiput(-1,0)(1,0){4}{\psdot(1.5,1)}
   \multiput(-1,1)(1,0){3}{\psdot(1.5,1)}
   \multiput(-1,-1)(1,0){2}{\psdot(1.5,1)}
   \multiput(-.5,-1.5)(0,1){3}{\psdot(1.5,1)}
   \multiput(.5,-.5)(0,1){3}{\psdot(1.5,1)}
   \multiput(1.5,-.5)(0,1){2}{\psdot(1.5,1)}

  \end{pspicture}
  \caption{an example of $T^2O$-domain}
  \label{fig:2-2-10}
 \end{center}
\end{figure}

\begin{theorem}
%
\label{k impurity case}
We have a bijection between the following two sets. 
\begin{eqnarray*}
{\cal D}(G) & := & \{ \mbox{ 
dimer coverings of $G$ } \}
\\
{\cal F}(G, Q) & := & \{ 
(T, S, \{ e_j \}_{j=1}^k )
\, | \;
T : 
\mbox{spanning tree of }
\overline{G_1},  
\;
S : 
\mbox{spanning forest of }
G_2, 
\\
&&\qquad
\{ e_j \}_{j=1}^k \subset E_2 : 
\mbox{configuration of impurities, with condition (Q)
} \}
\end{eqnarray*}
(Q) : \\
(1)
$T$ 
is composed of 
$k$ TI-trees,  
$(k-1)$ TO-trees, and the other ones are IO-trees. 
\\
\noindent
(2)
$S$ 
is composed of 
$k$ trees. 
\\
\noindent
(3)
$T$, $S$ 
are disjoint each other, and 
$k$ TI-trees of 
$T$ 
and 
$k$ 
trees of 
$S$ 
are paired by impurities. 
\end{theorem}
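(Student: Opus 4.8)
The plan is to mirror the proof of Theorem~\ref{pairing} via slit curves, with the root $R$ taking over the bookkeeping role that the outer boundary played implicitly in the $G^{(m,n)}$ case; the asymmetry between $\overline{G_1}$ and $G_2$ will reflect the fact that a slit curve is now allowed to terminate at a terminal.

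First, given a dimer covering of $G=G^{(k)}$, I would draw the slit curves and cut $G$ into domains exactly as before. The analogue of Proposition~\ref{properties of lines} should persist: each domain has an odd number of vertices, slit curves neither branch nor close up, and each terminates at an ordinary boundary point or at a terminal. Ignoring middle vertices, the $V_1$-domains and $V_2$-domains are trees, so we again obtain spanning forests of $G_1$ and of $G_2$. Since each impurity meets a slit curve at its midpoint, it joins one $V_1$-domain to one $V_2$-domain, so the $k$ impurities effect a perfect matching between $k$ distinguished $V_1$-domains and the $k$ domains of the $G_2$-forest $S$; this yields (Q)(2) and the pairing half of (Q)(3).

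Next I would assemble the $V_1$-side into the tree $T$. Adjoining to the $G_1$-forest the outer edges of $\overline{G}$ that join terminals and boundary vertices to $R$, I claim the result is a spanning tree of $\overline{G_1}$: every domain reaches the boundary and hence carries either a terminal or a boundary vertex joined to $R$, so the graph is connected, while a comparison of edge and vertex counts forces it to be acyclic once exactly one outer edge per domain is retained. Deleting $R$ from $T$ returns the domains, which I would classify as TI-, TO-, or IO-trees according to whether they connect to $R$ through a terminal with no boundary vertex, through a boundary vertex while still carrying a terminal, or through a boundary vertex with no terminal. The $k$ impurity-matched domains are precisely the TI-trees; since $G^{(k)}$ has $2k-1$ terminals and $k$ of them lie in TI-trees, the remaining $k-1$ lie in TO-trees and all other domains are IO-trees, giving (Q)(1) and (Q)(3).

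To produce the inverse I would imitate Proposition~\ref{opposite way}. Given $(T,S,\{e_j\})$ satisfying (Q), delete $R$ to recover the TI/TO/IO-trees, pair the $k$ TI-trees with the $k$ trees of $S$ through the impurities, and on each piece reconstruct the dimer covering by rooting at the impurity (for a matched TI--$S$ pair) or at the connection to $R$ (for the TO- and IO-parts); in each case the relevant subtree, once subdivided by its impurity or its root, has an even number of vertices, so the matching is forced and unique. I expect the main obstacle to be the assembly step of the third paragraph: establishing rigorously that the $G_1$-forest together with the outer edges is a genuine spanning tree of $\overline{G_1}$ carrying exactly $k$ TI- and $k-1$ TO-trees. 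This is the precise point where the parity count of Proposition~\ref{properties of lines} must be redone while simultaneously tracking the root, the terminals, and the boundary vertices; once it is in place, the $G_2$-side and the inverse map are routine adaptations of the $G^{(m,n)}$ argument.
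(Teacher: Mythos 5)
Your overall route --- slit curves, odd/even vertex counts on domains, assembly of the $V_1$-side into a spanning tree of $\overline{G_1}$, and an inverse built by rooting each piece at its impurity or at its connection to $R$ --- is the same as the paper's, but the step you defer as ``the main obstacle'' is in fact the entire content of the paper's proof of the direction ${\cal D}(G) \to {\cal F}(G,Q)$, and your second paragraph already uses its conclusions without justification. You assert that the $k$ impurities ``effect a perfect matching'' between $k$ distinguished $V_1$-domains and the $k$ domains of the $G_2$-forest $S$; but nothing you have said shows that $S$ has exactly $k$ components, that every impurity lands with its $V_1$-endpoint in a terminal-carrying domain, or that no domain receives two impurities. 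The paper derives all of this from three facts absent from your proposal: (i) TI-domains and domains in $G'_2$ have an odd number of vertices (hence must contain impurities), while TO- and IO-domains are even; (ii) if there are $l$ TO-domains then there are at least $l+1$ domains in $G'_2$ --- a separation property of the slit curves bounding TO-domains; and (iii) $\sharp\{\mbox{TI-domains}\} + \sharp\{\mbox{TO-domains}\} = 2k-1$, the number of terminals, under the convention that a $T^l I$-domain counts as one TI- and $(l-1)$ TO-domains and a $T^l O$-domain as $l$ TO-domains. From (i) and the budget of $k$ impurities one gets $\sharp\{\mbox{TI-domains}\} \le k$ and $\sharp\{\mbox{domains in } G'_2\} \le k$; then (ii) forces $\sharp\{\mbox{TO-domains}\} \le k-1$ while (iii) forces $\sharp\{\mbox{TO-domains}\} \ge k-1$, pinning down exactly $k$ TI-domains, $k-1$ TO-domains, and $k$ domains in $G'_2$, each paired with exactly one impurity. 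The same impurity budget is also what excludes closed slit curves and shows that each TO-domain has a \emph{unique} boundary vertex --- which is needed before you may call the pieces trees at all; the edge/vertex count you invoke does not supply any of this.

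A second, smaller omission: your assembly step retains ``exactly one outer edge per domain'' but does not say which one, and this matters for well-definedness. Multi-terminal domains genuinely occur ($T^l I$- and $T^l O$-domains), and the paper fixes a canonical convention: connect to $R$ the unique terminal of each TI-tree, the boundary vertex of each $T^l O$-tree, and, for a $T^l I$-tree, the terminal $T_{i_1}$ of smallest index among those it contains. Without such a choice a single dimer covering would correspond to several spanning trees of $\overline{G_1}$, breaking the bijection. Your inverse map itself (pairing TI-trees with trees of $S$ through the impurities and using the parity of each rooted piece to force a unique matching) is correct and coincides with the paper's $T_{FD}$.
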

\begin{remark}
(1) The spanning tree 
$T$ 
of 
$\overline{G_1}$ 
uniquely determines the spannng forest 
$S$ 
of 
$G_2$ 
under the condition that they are disjoint. 
(2) In condition (Q)(1), 
$T^l I$-trees are counted as one TI-tree and 
$(l-1)$ 
TO-trees, and 
$T^l O$-trees are counted as 
$l$ TO-trees. 
(3) In each TO-trees and IO-trees,   
the boundary vertices must be unique, 
since otherwise they would not be trees.
\end{remark} 
Figure \ref{fig:2-2-11}
shows an example. \\
%

\begin{figure}[H]
 \begin{center}
  \begin{pspicture}(6,7)(0,-2.5)
   \psset{unit=2}
   \psline[linewidth=.15,linecolor=gray,dotsize=.2]{*-*}(1,2)(.5,2.5)
   \psline[linewidth=.15,linecolor=gray,dotsize=.2]{*-*}(4,1)(3.5,.5)
   \psline[linewidth=0.04](0,0)(1,0)(1,1)(2,1)(2,2)(2.5,2)
   \psline[linewidth=0.04](1,2)(1,3)
   \psline[linewidth=0.04](3,1)(4,1)
   \psline(.5,.5)(.5,2.5)
   \psline(.5,1.5)(1.5,1.5)(1.5,2.5)(2.5,2.5)
   \psline(.5,-.5)(1.5,-.5)(1.5,.5)(3.5,.5)
   \psline(2.5,.5)(2.5,1.5)(3.5,1.5)
   \pscircle[fillstyle=solid,fillcolor=white,linecolor=white](2.5,-.5){.2}
   \put(2.4,-.55){\large$\bigstar$}
   \put(.5,-.5){
   \multiput(0,0)(0,1){4}{\pscircle[fillstyle=solid,fillcolor=black](0,0){.1}}
   \multiput(1,0)(0,1){4}{\pscircle[fillstyle=solid,fillcolor=black](0,0){.1}}
   \multiput(2,1)(0,1){3}{\pscircle[fillstyle=solid,fillcolor=black](0,0){.1}}
   \multiput(3,1)(0,1){2}{\pscircle[fillstyle=solid,fillcolor=black](0,0){.1}}
   \psset{linestyle=dashed}
   }
   {
   \psset{linestyle=dashed}
    \psset{linestyle=solid}
   \pscircle[fillstyle=solid,fillcolor=white](0,0){.1}
   \pscircle[fillstyle=solid,fillcolor=white](0,0){.05}
   \pscircle[fillstyle=solid,fillcolor=white](1,3){.1}
   \pscircle[fillstyle=solid,fillcolor=white](1,3){.05}
   \pscircle[fillstyle=solid,fillcolor=white](1,0){.1}
   \pscircle[fillstyle=solid,fillcolor=white](1,1){.1}
   \pscircle[fillstyle=solid,fillcolor=white](1,2){.1}
   \pscircle[fillstyle=solid,fillcolor=white](2,1){.1}
   \pscircle[fillstyle=solid,fillcolor=white](2,2){.1}
   \pscircle[fillstyle=solid,fillcolor=white](3,1){.1}
   \pscircle[fillstyle=solid,fillcolor=white](4,1){.1}
   \pscircle[fillstyle=solid,fillcolor=white](4,1){.05}
   }

  \end{pspicture}
  \caption{an example of Theorem \ref{k impurity case}. 
Thick lines are spanning tree of 
$\overline{G_1}$, and thin lines are spanning forest of 
$G_2$}
  \label{fig:2-2-11}
 \end{center}
\end{figure}
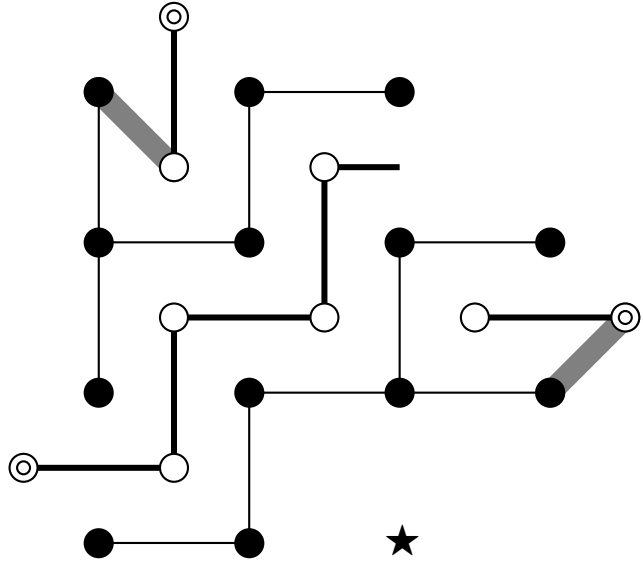

\begin{proof}
As in the proof of Theorem \ref{pairing}, 
it suffices to construct the mappings 
$T_{FD} : {\cal F}(G,Q) \to {\cal D}(G)$ 
and 
$T_{DF}=T_{FD}^{-1} : {\cal D}(G) \to {\cal F}(G,Q)$. 
For 
$T_{FD} : {\cal F}(G,Q) \to {\cal D}(G)$, 
we note that, as subgraphs of 
$G'_1$, $G'_2$,  
the numbers of vertices of a TI-tree and trees in 
$G'_2$ 
are odd, while those of a TO-tree and a IO-tree are even. 
Putting impurities, they become all even, so that 
it suffices to find (unique) dimer coverings on each trees by the argument in the proof of Proposition \ref{opposite way}. 

It then suffices to construct the mapping 
$T_{DF} : {\cal D}(G) \to {\cal F}(G,Q)$. 
Given a dimer covering on 
$G$, 
we draw slit curves as was done in Figure \ref{fig:2-1-3}, 
which divides 
$G$ 
into some domains. 
We note that vertices in 
$V_1$ 
and 
$V_2$ 
are not in the same domain so that each domains can be regarded as subgraph of either 
$G'_1$ 
or 
$G'_2$.
Moreover those domains, which are subgraphs of 
$G'_1$,  
are either  
TI-domain, TO-domain, 
$T^l I$-domain, 
$T^l O$-domain, 
or  
IO-domain, 
provided there are no loops on curves 
(if any, there would appear other sort of domains).
In what follows, 
as was already mentioned, 
we regard(count) that a 
$T^l I$-domain 
as a composition of a TI-domain and 
$(l-1)$ TO-domains, and regard a 
$T^l O$-domain as 
$l$ TO-domains. 
We then note the following facts.\\

\noindent
(i)
Because the number of vertices in 
TI-domains and those in domains in 
$G'_2$ 
are odd, they should have impurities. \\
(ii)
If 
$\sharp \{ \mbox{ TO-domains }\} = l$,
then 
$\sharp \{ \mbox{ domains in $G'_2$ } \} \ge l+1$. \\ 
(iii)
$\sharp \{ \mbox{ TI-domains }\}
+
\sharp \{ \mbox{ TO-domains } \} = 2k-1$. 
\\

\noindent
Using these facts, we proceed \\

\noindent
(a)
Since we have 
$k$ 
impurities, by (i) 
$\sharp \{ \mbox{ TI-domains } \} \le k$, 
so that by (iii) 
$\sharp \{ \mbox{ TO-domains } \} \ge k-1$.\\
\noindent
(b)
Since 
$\sharp \{ \mbox{ domains in $G'_2$ } \} \le k$, 
by (ii) we should have 
$\sharp \{ \mbox{ TO-domains } \} \le k-1$ 
so that by (iii) 
$\sharp \{ \mbox{ TI-domains } \} \ge k$. \\

By 
(a), (b), 
it follows that 
$\sharp \{ \mbox{ TI-domains } \} = k$, 
$\sharp \{ \mbox{ TO-domains }\} = k-1$. 
Each 
TO-domains has only one boundary vertex, since otherwise the number of domains in  
$G'_2$ 
is larger than 
$k$. 
Moreover, 
there are no loops in the curves, since otherwise it would  produce extra domains so that we would run out of impurities. 
Therefore 
each domains are trees, and thus we obtain the spanning forest of 
$G'_1$ 
and 
$G'_2$, 
and by ignoring middle vertices, we obtain those of 
$G_1$, $G_2$. 
Moreover, 
TI-trees and trees in 
$G_2$ 
are paired by impurities. 

We next 
connect the terminals of TI-trees directly to $R$, 
connect the boundary vertices of $T^l O$-trees directly to $R$, and connect directly to $R$ the terminal 
$T_{i_1}$ 
which has the smallest index among  
$T_{i_1},\cdots, T_{i_l}$ 
($i_1 < i_2 < \cdots < i_l$) 
of 
$T^l I$-trees. 
Then 
we obtain a spanning tree 
$T$ 
of 
$\overline{G_1}$. 
By the arguments above, 
the spanning tree $T$ 
of 
$\overline{G_1}$, 
the spanning forest 
$S$ 
of 
$G_2$ 
and the $k$ impurities satisfy the condition (Q). 
The proof of theorem \ref{k impurity case} is completed.
\QED  
\end{proof}
\begin{remark}
The boundary vertex of TO-tree must be located farther than the next terminal, 
since otherwise the pairing condition 
Q(3) would not be satisfied (Figure \ref{fig:2-2-12}). 
Hence the mapping 
$T_{DF} : {\cal D}(G) \to {\cal F}(G, Q)$ 
does not exhaust all spanning trees of 
$\overline{G_1}$. 
\end{remark}
%

\begin{figure}[H]
 \begin{center}
  \begin{pspicture}(6,12)(0,-2.5)
   \psset{unit=2}

   \put(0,4){
   \put(-.5,1){$(1)$}
   \put(0.5,0){
   \psline[linestyle=dotted](0,0)(1,0)(1,1)(0,1)(0,0)
   \psline[linestyle=dotted](0,0.5)(1,0.5)
   \psline[linestyle=dotted](0.5,0)(.5,1)
   \psline[linewidth=.01](0,0)(1,1)
   \psline[linewidth=.01](1,0)(0,1)
   \pscircle[fillstyle=solid,fillcolor=black](0,0){.1}
   \pscircle[fillstyle=solid,fillcolor=black](1,0){.1}
   \pscircle[fillstyle=solid,fillcolor=black](0,1){.1}
   \pscircle[fillstyle=solid,fillcolor=black](1,1){.1}
   \psdot(.5,.5)
   }
   }

   \put(-.5,3){$(2)$}
   \psline[linewidth=0.05](0,0)(1,0)(1,1)(2,1)(2,.5)
   \psline[linewidth=0.05](1,3)(1,2)(2,2)
   \psline[linewidth=0.05](3,1)(4,1)
   \psline[linewidth=0.03](.5,.5)(.5,2.5)
   \psline[linewidth=0.03](.5,1.5)(3.5,1.5)
   \psline[linewidth=0.03](3.5,.5)(2.5,0.5)(2.5,2.5)(1.5,2.5)
   \psline[linewidth=0.03](.5,-.5)(1.5,-0.5)(1.5,.5)

   \put(.5,-.5){
   \psset{linewidth=.01}
   \psline(0,0)(2,2)(3,1)(3.5,1.5)(3,2)(2,1)(0,3)(.5,3.5)(1,3)(0,2)(1,1)}
   {
   \psset{linewidth=.01}
   \psline(0,0)(2.5,2.5)
   \psline(1.5,2.5)(2.5,1.5)
   \psline(.5,.5)(1.5,-.5)
   \psline(0,0)(.5,-.5)
   }
   \put(.5,-.5){
   \multiput(0,0)(0,1){4}{\pscircle[fillstyle=solid,fillcolor=black](0,0){.1}}
   \multiput(1,0)(0,1){4}{\pscircle[fillstyle=solid,fillcolor=black](0,0){.1}}
   \multiput(2,1)(0,1){3}{\pscircle[fillstyle=solid,fillcolor=black](0,0){.1}}
   \multiput(3,1)(0,1){2}{\pscircle[fillstyle=solid,fillcolor=black](0,0){.1}}
   \psset{linestyle=dotted}
   \psline(0,0)(0,3) \psline(1,0)(1,3) \psline(2,1)(2,3)
   \psline(3,1)(3,2)
   \psline(0,0)(1,0)\psline(0,1)(3,1)\psline(0,2)(3,2)\psline(0,3)(2,3)
   }
   {
   \psset{linestyle=dotted}
   \psline(0,0)(1.5,0)\psline(1,-.5)(1,3)\psline(0.5,1)(4,1)\psline(.5,2)(2.5,2)\psline(2,2.5)(2,.5)
   \psline(3,.5)(3,1.5)
   \psset{linestyle=solid}
   \pscircle[fillstyle=solid,fillcolor=white](0,0){.1}
   \pscircle[fillstyle=solid,fillcolor=white](0,0){.05}
   \pscircle[fillstyle=solid,fillcolor=white](1,3){.1}
   \pscircle[fillstyle=solid,fillcolor=white](1,3){.05}
   \pscircle[fillstyle=solid,fillcolor=white](1,0){.1}
   \pscircle[fillstyle=solid,fillcolor=white](1,1){.1}
   \pscircle[fillstyle=solid,fillcolor=white](1,2){.1}
   \pscircle[fillstyle=solid,fillcolor=white](2,1){.1}
   \pscircle[fillstyle=solid,fillcolor=white](2,2){.1}
   \pscircle[fillstyle=solid,fillcolor=white](3,1){.1}
   \pscircle[fillstyle=solid,fillcolor=white](4,1){.1}
   \pscircle[fillstyle=solid,fillcolor=white](4,1){.05}
   }
   \psdot(0.5,0)\multiput(0,0)(0,1){3}{\psdot(1.,0.5)}
   \multiput(-1,0)(1,0){4}{\psdot(1.5,1)}
   \multiput(-1,1)(1,0){3}{\psdot(1.5,1)}
   \multiput(-1,-1)(1,0){2}{\psdot(1.5,1)}
   \multiput(-.5,-1.5)(0,1){3}{\psdot(1.5,1)}
   \multiput(.5,-.5)(0,1){3}{\psdot(1.5,1)}
   \multiput(1.5,-.5)(0,1){2}{\psdot(1.5,1)}

  \end{pspicture}
  \caption{The boundary vertex of TO-tree must be located farther than
  the next terminal}
  \label{fig:2-2-12}
 \end{center}
\end{figure}
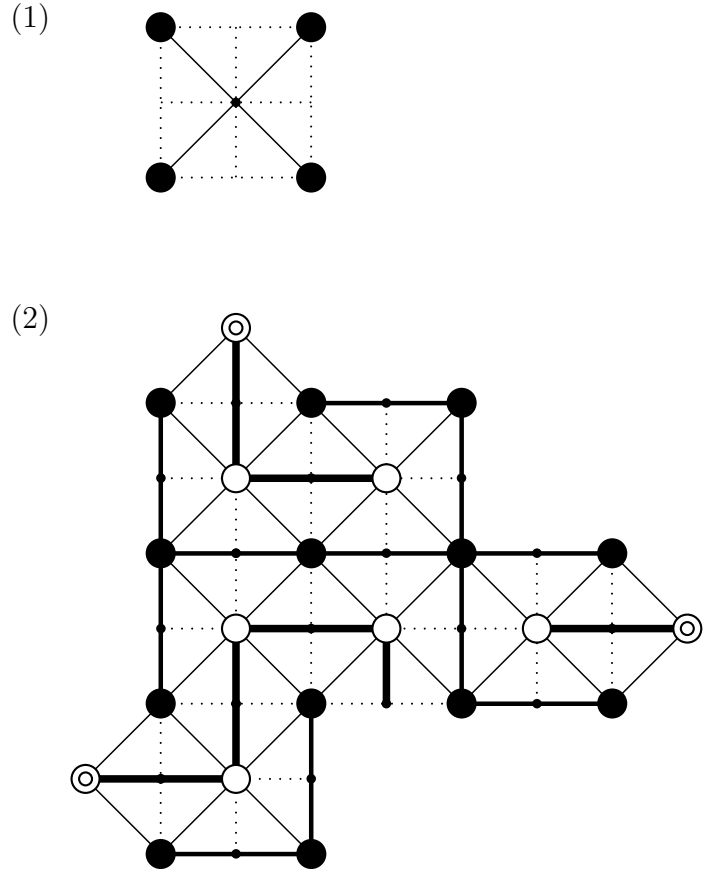

When 
$k=1$, 
we further identify terminal with the root, and then the statement of Theorem \ref{k impurity case} is simplified as follows. 
\begin{corollary}
\label{one impurity case}
We have a bijection between the following two sets. 
\begin{eqnarray*}
{\cal D}(G) & := & 
\{ \mbox{ dimer coverings in 
$G$ } \}
\\
{\cal T}(\overline{G_1}, Q') & := & 
\{ (T_1, e ) \, | \, 
T_1 : \mbox{spanning tree of $\overline{G_1}$}, 
\\
&&\qquad
e \in E_2 : 
\mbox{location of impurity, with condition (Q') } \}
\end{eqnarray*}
(Q') :  \\ 
If we regard the spanning tree of 
$\overline{G_1}$ 
as the spanning forest of 
$G_1$, 
the impurity lies on the subtree containing the terminal. 
\end{corollary}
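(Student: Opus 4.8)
The plan is to deduce the corollary directly from Theorem~\ref{k impurity case} by specializing to $k=1$ and watching the data $(T,S,\{e_j\}_{j=1}^{k})$ together with the conditions (Q)(1)--(3) degenerate into the single pair $(T_1,e)$ and the condition (Q').

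First I would specialize the theorem. With $k=1$ the set ${\cal F}(G,Q)$ consists of triples $(T,S,\{e_1\})$ in which $T$ is a spanning tree of $\overline{G_1}$, $S$ is a spanning forest of $G_2$ with a single component, and $e_1\in E_2$ is the lone impurity. Since $S$ has exactly one component it is a spanning tree of $G_2$, and by the first part of the Remark following Theorem~\ref{k impurity case} the disjointness requirement (Q)(3) determines $S$ uniquely once $T$ is fixed. Hence $S$ carries no extra information and the data collapses to the pair $(T,e_1)$; this already matches the shape of the target set ${\cal T}(\overline{G_1},Q')$.

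Next I would unwind condition (Q) when $k=1$. Deleting the root $R$ from the spanning tree $T$ yields a spanning forest of $G_1$ whose components are the subtrees of $T$, each attached to $R$ by exactly one edge (a second edge to $R$ would close a cycle). Because $G^{(1)}$ has the single terminal $T_1$, every component missing $T_1$ meets no terminal and so is an IO-tree, while the component $C_0\ni T_1$ is the only possible TI-tree. The clause ``$k-1=0$ TO-trees'' in (Q)(1) is then exactly the demand that $C_0$ reach $R$ through the terminal edge $(R,T_1)$ rather than through a boundary edge, i.e.\ that $C_0$ be a TI-tree; this is precisely what ``identifying the terminal with the root'' encodes, since contracting the edge $(R,T_1)$ puts the spanning trees of $\overline{G_1}$ whose $C_0$ is a TI-tree in bijection with all spanning trees of the contracted graph and thereby erases the TI/TO distinction. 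Finally, with a single impurity the pairing clause (Q)(3) says only that $e_1$ joins the unique TI-tree $C_0$ to $S$, which after suppressing $S$ reads: $e_1$ lies on the subtree containing the terminal. That is exactly (Q').

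Putting these reductions together, the bijection of Theorem~\ref{k impurity case} restricts to a bijection between ${\cal D}(G)$ and the pairs $(T_1,e)$ with $T_1$ a spanning tree of $\overline{G_1}$ (terminal identified with root) and $e$ lying on the terminal's subtree, which is ${\cal T}(\overline{G_1},Q')$. The step I expect to be the main obstacle is the identification itself: one must verify that contracting the terminal edge is a clean bijection on the relevant spanning trees---so that no covering with $C_0$ read as a TO-tree is lost and none is double counted---and that this is compatible with the reconstruction map $T_{FD}$, so that (Q') alone recovers all and only the admissible dimer coverings.
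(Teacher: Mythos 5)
Your proposal is correct and follows exactly the route the paper intends: the paper offers no separate proof of Corollary~\ref{one impurity case} beyond the remark that one sets $k=1$ in Theorem~\ref{k impurity case} and identifies the terminal with the root, and your argument is precisely that specialization with the details filled in (dropping $S$ via the uniqueness remark, reducing (Q)(1) with $0$ TO-trees to the requirement that the terminal's component be attached through the edge $(R,T_1)$, handling this by contracting that edge, and reading (Q)(3) as (Q')). Your closing concern is also well placed but harmless here: since with $k=1$ every spanning tree satisfying (Q) must contain $(R,T_1)$, the contraction is a clean bijection and nothing is lost or double counted.
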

Figure \ref{fig:2-2-13}
describes an example. \\
%


\begin{figure}[H]
 \begin{center}
  \begin{pspicture}(6,6)
   \psset{unit=2}

   \psline[linewidth=.1,linecolor=gray]{*-*}(.5,.5)(1,0)
   \psline[linewidth=.01](0,0)(1.5,1.5)
   \psline[linewidth=.01](2,0)(0,2)
   \psline[linewidth=.01](1,0)(0,1)(1,2)
   \psline[linewidth=.01](1,0)(2,1)(1,2)
   \psline[linewidth=.05](0.5,0.5)(.5,1.5)(1.5,1.5)
   \psline[linewidth=.05](1.5,0.5)(1.5,0)
   \psline[linewidth=.03](2,0)(2,1)(1,1)(1,0)(0,0)(0,2)(1,2)
   \psline[linestyle=dotted](0,0)(2,0)(2,1)
   \psline[linestyle=dotted](0,0)(0,2)(1,2)
   \psline[linestyle=dotted](0.5,0)(.5,2)
   \psline[linestyle=dotted](1,0)(1,2)
   \psline[linestyle=dotted](1.5,0)(1.5,1.5)
   \psline[linestyle=dotted](0,.5)(2,.5)
   \psline[linestyle=dotted](0,1)(2,1)
   \psline[linestyle=dotted](0,1.5)(1.5,1.5)
   \multiput(0,0)(1,0){3}{
   \multiput(0,0)(0,1){2}{\pscircle[fillstyle=solid,fillcolor=black](0,0){.07}}
   }
   \pscircle[fillstyle=solid,fillcolor=black](0,2){.07}
   \pscircle[fillstyle=solid,fillcolor=black](1,2){.07}
   \multiput(0,0)(1,0){2}{
   \multiput(0,0)(0,1){2}{
   \pscircle[fillstyle=solid,fillcolor=white](0.5,0.5){.07}
   }
   }
   \pscircle[fillstyle=solid,fillcolor=white](1.5,1.5){.03}

  \end{pspicture}
  \caption{an example of Corollary \ref{one impurity case}}
  \label{fig:2-2-13}
 \end{center}
\end{figure}

\subsection{Some features}
In this subsection
we discuss some consequences of Theorem \ref{pairing}, \ref{k impurity case}. \\

\noindent
(1) Naive description of the conjecture\\
The discussion above 
gives us a naive explanation of $(*)$. 
By Proposition \ref{properties of lines}(0), 
impurities lying inside of 
$G$
can always be moved to the boundary by succesive application of t-moves.
Hence 
there are as many configurations with boundary impurities as those with inner impurities. 
On the other hand, 
there are some configurations in which 
most lines live near the boundaries and do not enter inside. 
Therefore 
the number of configuration with boundary impurities 
would be much more than those with inner impurities. 
Furthermore 
the result of simulations 
(Figure \ref{fig:1-5})
implies that almost all slit curves typically lie near the boundary whereas there is a big one inside 
$G$. \\

\noindent
(2) Relation to the Temperley bijection\\
In our notation, 
the Temperley bijection is stated as follows. 
Consider 
$G^{(1)}$ 
as in subsection 2.2, 
eliminate edges in 
$E_2$ 
and a vertex 
$P \in V_2$ 
on the boundary, and set 
$G:= G^{(1)} \setminus \{ P \}$. 
Figure \ref{fig:2-3-1} 
gives an example. \\
%


\begin{figure}[H]
 \begin{center}
  \begin{pspicture}(5,5)
   \psset{unit=2}
   \psline[linestyle=dotted](0,0)(2,0)
   \psline[linestyle=dotted](0,.5)(2,.5)
   \psline[linestyle=dotted](0,1)(2,1)
   \psline[linestyle=dotted](0,1.5)(1.5,1.5)
   \psline[linestyle=dotted](0.5,2)(1,2)
   \psline[linestyle=dotted](0,0)(0,1.5)
   \psline[linestyle=dotted](0.5,0)(0.5,2)
   \psline[linestyle=dotted](1,0)(1,2)
   \psline[linestyle=dotted](1.5,0)(1.5,1.5)
   \psline[linestyle=dotted](2,0)(2,1)
   \psline[linestyle=dotted,linecolor=lightgray](0,1.5)(0,2)(.5,2)

   \pscircle[fillstyle=solid,fillcolor=black](0,0){.07}
   \pscircle[fillstyle=solid,fillcolor=black](1,0){.07}
   \pscircle[fillstyle=solid,fillcolor=black](2,0){.07}
   \pscircle[fillstyle=solid,fillcolor=black](0,1){.07}
   \pscircle[fillstyle=solid,fillcolor=black](1,1){.07}
   \pscircle[fillstyle=solid,fillcolor=black](2,1){.07}
   \pscircle[fillstyle=solid,fillcolor=black](1,2){.07}
   \pscircle[fillstyle=solid,fillcolor=white](.5,.5){.07}
   \pscircle[fillstyle=solid,fillcolor=white](1.5,.5){.07}
   \pscircle[fillstyle=solid,fillcolor=white](.5,1.5){.07}
   \psdot(.5,0)\psdot(1.5,0)
   \psdot(0,.5)\psdot(1,.5)
   \psdot(0,.5)\psdot(2,.5)
   \psdot(0.5,1)\psdot(1.5,1)
   \psdot(0,1.5)\psdot(1,1.5)
   \psdot(0.5,2)
   \psdot[dotstyle=square,dotsize=.1](0,2)
   \put(-0.1,2.1){$P$}
   \put(1.4,1.45){\large $\bigstar$}
   
  \end{pspicture}
  \caption{an example of $G$. 
The star corresponds to the root $R$}
  \label{fig:2-3-1}
 \end{center}
\end{figure}
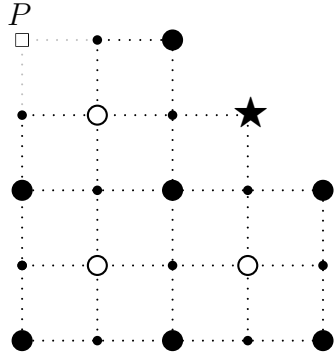

\noindent
Temperley bijection gives a bijection between the following two sets. 
\begin{eqnarray*}
{\cal D}(G) & := & \{ \mbox{ dimer covering of } G \}
\\
{\cal T}(\overline{G_1}) 
& := & 
\{ \mbox{ spanning tree of } \overline{G_1} \}
\end{eqnarray*}
This bijection is similar to that in Corollary 
\ref{one impurity case} where the impurity plays a role of  $P$. 
Figure \ref{fig:2-3-2} (1), (2) 
describes an example of the Temperley bijection which corresponds to the situation described in Figure \ref{fig:2-2-13}. \\


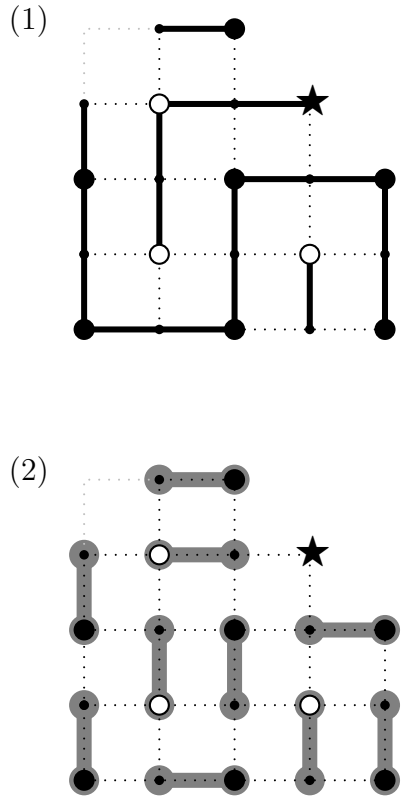
\begin{figure}[H]
 \begin{center}
  \begin{pspicture}(5,10)
   \psset{unit=2}
   \put(0,3.5){
   \put(-.5,2){$(1)$}
   \psline[linewidth=.04](0,1.5)(0,0)(1,0)(1,1)(2,1)(2,0)
   \psline[linewidth=.04](.5,.5)(0.5,1.5)(1.5,1.5)
   \psline[linewidth=.04](.5,2)(1,2)
   \psline[linewidth=.04](1.5,0)(1.5,.5)
   \psline[linestyle=dotted](0,0)(2,0)
   \psline[linestyle=dotted](0,.5)(2,.5)
   \psline[linestyle=dotted](0,1)(2,1)
   \psline[linestyle=dotted](0,1.5)(1.5,1.5)
   \psline[linestyle=dotted](0.5,2)(1,2)
   \psline[linestyle=dotted](0,0)(0,1.5)
   \psline[linestyle=dotted](0.5,0)(0.5,2)
   \psline[linestyle=dotted](1,0)(1,2)
   \psline[linestyle=dotted](1.5,0)(1.5,1.5)
   \psline[linestyle=dotted](2,0)(2,1)
   \psline[linestyle=dotted,linecolor=lightgray](0,1.5)(0,2)(.5,2)

   \pscircle[fillstyle=solid,fillcolor=black](0,0){.07}
   \pscircle[fillstyle=solid,fillcolor=black](1,0){.07}
   \pscircle[fillstyle=solid,fillcolor=black](2,0){.07}
   \pscircle[fillstyle=solid,fillcolor=black](0,1){.07}
   \pscircle[fillstyle=solid,fillcolor=black](1,1){.07}
   \pscircle[fillstyle=solid,fillcolor=black](2,1){.07}
   \pscircle[fillstyle=solid,fillcolor=black](1,2){.07}
   \pscircle[fillstyle=solid,fillcolor=white](.5,.5){.07}
   \pscircle[fillstyle=solid,fillcolor=white](1.5,.5){.07}
   \pscircle[fillstyle=solid,fillcolor=white](.5,1.5){.07}
   \psdot(.5,0)\psdot(1.5,0)
   \psdot(0,.5)\psdot(1,.5)
   \psdot(0,.5)\psdot(2,.5)
   \psdot(0.5,1)\psdot(1.5,1)
   \psdot(0,1.5)\psdot(1,1.5)
   \psdot(0.5,2)
   \put(1.4,1.45){\large $\bigstar$}
   }

   \put(0,0.5){
   \put(-.5,2){$(2)$}
   \psline[linewidth=.1,linecolor=gray,dotsize=.2]{*-*}(0,0)(0,.5)
   \psline[linewidth=.1,linecolor=gray,dotsize=.2]{*-*}(0,1)(0,1.5)
   \psline[linewidth=.1,linecolor=gray,dotsize=.2]{*-*}(0.5,0)(1,0)
   \psline[linewidth=.1,linecolor=gray,dotsize=.2]{*-*}(0.5,0.5)(0.5,1)
   \psline[linewidth=.1,linecolor=gray,dotsize=.2]{*-*}(1,0.5)(1,1)
   \psline[linewidth=.1,linecolor=gray,dotsize=.2]{*-*}(0.5,1.5)(1,1.5)
   \psline[linewidth=.1,linecolor=gray,dotsize=.2]{*-*}(0.5,2)(1,2)
   \psline[linewidth=.1,linecolor=gray,dotsize=.2]{*-*}(1.5,0)(1.5,.5)
   \psline[linewidth=.1,linecolor=gray,dotsize=.2]{*-*}(2,0)(2,.5)
   \psline[linewidth=.1,linecolor=gray,dotsize=.2]{*-*}(1.5,1)(2,1)
   \psline[linestyle=dotted](0,0)(2,0)
   \psline[linestyle=dotted](0,.5)(2,.5)
   \psline[linestyle=dotted](0,1)(2,1)
   \psline[linestyle=dotted](0,1.5)(1.5,1.5)
   \psline[linestyle=dotted](0.5,2)(1,2)
   \psline[linestyle=dotted](0,0)(0,1.5)
   \psline[linestyle=dotted](0.5,0)(0.5,2)
   \psline[linestyle=dotted](1,0)(1,2)
   \psline[linestyle=dotted](1.5,0)(1.5,1.5)
   \psline[linestyle=dotted](2,0)(2,1)
   \psline[linestyle=dotted,linecolor=lightgray](0,1.5)(0,2)(.5,2)

   \pscircle[fillstyle=solid,fillcolor=black](0,0){.07}
   \pscircle[fillstyle=solid,fillcolor=black](1,0){.07}
   \pscircle[fillstyle=solid,fillcolor=black](2,0){.07}
   \pscircle[fillstyle=solid,fillcolor=black](0,1){.07}
   \pscircle[fillstyle=solid,fillcolor=black](1,1){.07}
   \pscircle[fillstyle=solid,fillcolor=black](2,1){.07}
   \pscircle[fillstyle=solid,fillcolor=black](1,2){.07}
   \pscircle[fillstyle=solid,fillcolor=white](.5,.5){.07}
   \pscircle[fillstyle=solid,fillcolor=white](1.5,.5){.07}
   \pscircle[fillstyle=solid,fillcolor=white](.5,1.5){.07}
   \psdot(.5,0)\psdot(1.5,0)
   \psdot(0,.5)\psdot(1,.5)
   \psdot(0,.5)\psdot(2,.5)
   \psdot(0.5,1)\psdot(1.5,1)
   \psdot(0,1.5)\psdot(1,1.5)
   \psdot(0.5,2)
   \put(1.4,1.45){\large $\bigstar$}
   }

  \end{pspicture}
  \caption{(1) example of spanning tree of $\overline{G_1}$, 
(2) corresponding dimer covering of $G$}
  \label{fig:2-3-2}
 \end{center}
\end{figure}

\noindent
(3)
Local-move connectedness\\
We can introduce the orientation on dimers
$e \in E_1$
such that 
$o(e)\in V_1 \cup V_2$
and 
$t(e) \in V_3$, 
where 
$o(e)$
(resp. $t(e)$)
is the original of 
$e$ 
(resp. terminal of $e$). 
Similarly, 
by regarding 
the impurities as the roots of trees, 
we can introduce orientation on each subtrees consisting the spanning forest.
Then we note the following facts.
(1)
Each dimers 
are arranged along this orientation.
(Figure \ref{fig:2-3-3} (1)).
(2) 
Let 
$T_j$
($j=1,2$)
be subtrees of the spanning forest of 
$G_j$
and 
$e_j \in T_j$ 
be some neighboring dimers which are parallel each other. 
If 
$T_1$
and 
$T_2$ 
do not share the same impurity, and if 
$e_1$, $e_2$
have the opposite orientation, then s-move is possible at 
$e_1$, $e_2$
(Figure \ref{fig:2-3-3} (2)).

By moving impurities by t-move, we can adjust 
the orientation of each trees so that s-move is possible at given site with a pair of dimers belonging to different trees. 
In fact, 
we have an (not so simple) algorithm by which any dimer covering can be transformed to the specific one where all trees are parallel
(Figure \ref{fig:2-3-4}). 
These facts 
gives us another proof of the local move connectedness. \\
%


\begin{figure}[H]
 \begin{center}
  \begin{pspicture}(8,10)(0,-1.5)
   \psset{unit=2}

   \put(0,3)
   {
   \put(-.5,2){$(1)$}
   \psline[linewidth=.1,linecolor=gray,dotsize=.2]{*-*}(3,0)(3.5,.5)
   \psline(0,1)(2,1)(2,0)(4,0)(4,2)
   \psdot(0.5,1)\psdot(1.5,1)\psdot(2,.5)\psdot(2.5,.0)\psdot(3.5,.0)
   \psdot(4,.5)\psdot(4,1.5)
   \pscircle[fillstyle=solid,fillcolor=black](0,1){.07}
   \pscircle[fillstyle=solid,fillcolor=black](1,1){.07}
   \pscircle[fillstyle=solid,fillcolor=black](2,1){.07}
   \pscircle[fillstyle=solid,fillcolor=black](2,0){.07}
   \pscircle[fillstyle=solid,fillcolor=black](3,0){.07}
   \pscircle[fillstyle=solid,fillcolor=black](4,0){.07}
   \pscircle[fillstyle=solid,fillcolor=black](4,1){.07}
   \pscircle[fillstyle=solid,fillcolor=black](4,2){.07}
   \put(4,2){\psline[arrowsize=.1]{->}(.2,0)(.2,-.5)}
   \put(4,1){\psline[arrowsize=.1]{->}(.2,0)(.2,-.5)}
   \put(4,0){\psline[arrowsize=.1]{->}(0,-0.2)(-.5,-.2)}
   \put(0,1){\psline[arrowsize=.1]{->}(0,0.2)(.5,.2)}
   \put(1,1){\psline[arrowsize=.1]{->}(0,0.2)(.5,.2)}
   \put(2,1){\psline[arrowsize=.1]{->}(0.2,0)(.2,-.5)}
   \put(2,0){\psline[arrowsize=.1]{->}(0,-0.2)(.5,-.2)}
   }

   \put(0,0)
   {
   \put(-.5,2){$(2)$}
   \psline[linewidth=.1,linecolor=gray,dotsize=.2]{*-*}(0,1)(.5,.5)
   \psline(0,1)(1,1)(1,0)(4,0)(4,1)(5,1)
   \psdot(0.5,1)\psdot(1,.5)\psdot(1.5,.0)\psdot(2.5,.0)\psdot(3.5,.0)
   \psdot(4,.5)\psdot(4.5,1)
   \pscircle[fillstyle=solid,fillcolor=black](0,1){.07}
   \pscircle[fillstyle=solid,fillcolor=black](1,1){.07}
   \pscircle[fillstyle=solid,fillcolor=black](1,0){.07}
   \pscircle[fillstyle=solid,fillcolor=black](2,0){.07}
   \pscircle[fillstyle=solid,fillcolor=black](3,0){.07}
   \pscircle[fillstyle=solid,fillcolor=black](4,0){.07}
   \pscircle[fillstyle=solid,fillcolor=black](4,1){.07}
   \put(4,1){\psline[arrowsize=.1]{->}(.2,-0.1)(.2,-.5)}
   \put(4,0){\psline[arrowsize=.1]{->}(0,-0.2)(-.5,-.2)}
   \put(3,0){\psline[arrowsize=.1]{->}(0,-0.2)(-.5,-.2)}
   \put(2,0){\psline[arrowsize=.1]{->}(0,-0.2)(-.5,-.2)}
   \put(1,0){\psline[arrowsize=.1]{->}(-0.2,0)(-.2,.5)}
   \put(1,1){\psline[arrowsize=.1]{->}(-0.1,-0.2)(-.5,-.2)}
   \put(-.5,.5){
   \psline[linewidth=.1,linecolor=gray,dotsize=.2]{*-*}(5,1)(4.5,1.5)
   \psline(0,1)(2,1)(2,0)(4,0)(4,1)(5,1)
   \psdot(0.5,1)\psdot(1.5,1)\psdot(2,.5)\psdot(2.5,.0)\psdot(3.5,.0)
   \psdot(4,.5)\psdot(4.5,1)
   \pscircle[fillstyle=solid,fillcolor=white](0,1){.07}
   \pscircle[fillstyle=solid,fillcolor=white](1,1){.07}
   \pscircle[fillstyle=solid,fillcolor=white](2,1){.07}
   \pscircle[fillstyle=solid,fillcolor=white](2,0){.07}
   \pscircle[fillstyle=solid,fillcolor=white](3,0){.07}
   \pscircle[fillstyle=solid,fillcolor=white](4,0){.07}
   \pscircle[fillstyle=solid,fillcolor=white](4,1){.07}
   \pscircle[fillstyle=solid,fillcolor=white](5,1){.07}
   \put(0,1){\psline[arrowsize=.1]{->}(0,0.2)(.5,.2)}
   \put(1,1){\psline[arrowsize=.1]{->}(0,0.2)(.5,.2)}
   \put(2,1){\psline[arrowsize=.1]{->}(0.2,0)(.2,-.5)}
   \put(2,0){\psline[arrowsize=.1]{->}(0.1,0.2)(.5,.2)}
   \put(3,0){\psline[arrowsize=.1]{->}(0.,0.2)(.5,.2)}
   \put(4,0){\psline[arrowsize=.1]{->}(-0.2,0.1)(-.2,.5)}
   \put(4,1){\psline[arrowsize=.1]{->}(0,0.2)(.5,.2)}
   \put(2.25,-0.25){\pscircle(0,0){.6}}
   }

   }

  \end{pspicture}
  \caption{(1) dimers are arranged along the orientation 
of trees, 
(2) if a pair of neighboring dimers have the opposite orientation,
  s-move is possible}
  \label{fig:2-3-3}
 \end{center}
\end{figure}
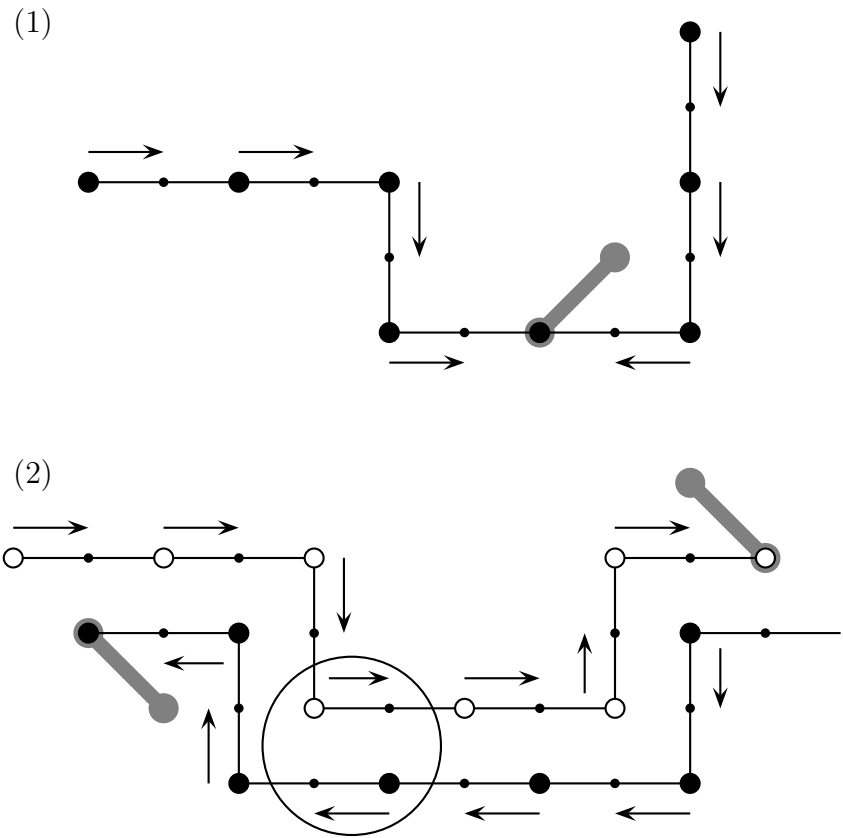

\begin{figure}[H]
 \begin{center}
  \begin{pspicture}(10,12)(-1,0)
   \put(0,8){
   \put(-1,2){$(1)$}
   \pspolygon[fillstyle=solid,fillcolor=yellow](1,1)(1,2)(0,2)
   \psline[linewidth=.1,linecolor=gray]{*-*}(0,0)(1,0)
   \psline[linewidth=.1,linecolor=gray]{*-*}(1,1)(1,2)
   \psline[linewidth=.1,linecolor=gray]{*-*}(2,1)(3,1)
   \psline(0,0)(3,0)(3,2)(0,2)(0,0)
   \psline(1,0)(1,2)\psline(2,0)(2,2)
   \psline(0,1)(3,1)
   \psline(1,0)(0,1)
   \psline(2,0)(0,2)
   \psline(3,0)(2,1)(3,2)
   \psline(2,2)(1,1)
   \pscircle[fillstyle=solid,fillcolor=white](0,0){.07}
   \pscircle[fillstyle=solid,fillcolor=white](1,2){.07}
   \pscircle[fillstyle=solid,fillcolor=white](3,1){.07}
   \put(1.5,-.5){\psline[linewidth=.1,arrowsize=.3]{->}(0,0)(0,-1)}
   }

   \put(0,4){
   \put(-1,2){$(2)$}
   \pspolygon[fillstyle=solid,fillcolor=yellow,linewidth=0,linecolor=white](1,1)(.5,1.5)(0,1)(.5,.5)
   \psline[linewidth=.1,linecolor=gray]{*-*}(0,0)(1,0)
   \psline[linewidth=.1,linecolor=gray]{*-*}(0,2)(1,2)
   \psline[linewidth=.1,linecolor=gray]{*-*}(2,1)(3,1)
   \psline(0,0)(3,0)(3,2)(0,2)(0,0)
   \psline(1,0)(1,2)\psline(2,0)(2,2)
   \psline(0,1)(3,1)
   \psline(1,0)(0,1)
   \psline(2,0)(0,2)
   \psline(3,0)(2,1)(3,2)
   \psline(2,2)(1,1)
   \pscircle[fillstyle=solid,fillcolor=white](0,0){.07}
   \pscircle[fillstyle=solid,fillcolor=white](1,2){.07}
   \pscircle[fillstyle=solid,fillcolor=white](3,1){.07}
   \put(1.5,-.5){\psline[linewidth=.1,arrowsize=.3]{->}(0,0)(0,-1)}
   }

   \put(0,0){
   \put(-1,2){$(3)$}
   \pspolygon[fillstyle=solid,fillcolor=yellow,linewidth=0,linecolor=white](1,1)(.5,1.5)(0,1)(.5,.5)
   \pspolygon[fillstyle=solid,fillcolor=cyan,linewidth=0,linecolor=white](3,0)(3,1)(2,1)
   \psline[linewidth=.1,linecolor=gray]{*-*}(0,0)(1,0)
   \psline[linewidth=.1,linecolor=gray]{*-*}(0,2)(1,2)
   \psline[linewidth=.1,linecolor=gray]{*-*}(2,1)(3,1)
   \psline(0,0)(3,0)(3,2)(0,2)(0,0)
   \psline(1,0)(1,2)\psline(2,0)(2,2)
   \psline(0,1)(3,1)
   \psline(0,0)(1,1)
   \psline(0,1)(1,2)
   \psline(1,1)(2,0)
   \psline(3,0)(2,1)(3,2)
   \psline(2,2)(1,1)
   \pscircle[fillstyle=solid,fillcolor=white](0,0){.07}
   \pscircle[fillstyle=solid,fillcolor=white](1,2){.07}
   \pscircle[fillstyle=solid,fillcolor=white](3,1){.07}
   \put(4,2){\psline[linewidth=.1,arrowsize=.3]{->}(0,0)(2,4)}
   }

   \put(7,6){
   \put(-1,2){$(4)$}
   \pspolygon[fillstyle=solid,fillcolor=cyan,linewidth=0,linecolor=white](3,0)(3,1)(2,1)
   \pspolygon[fillstyle=solid,fillcolor=yellow,linewidth=0,linecolor=white](2.5,0.5)(2,0)(1.5,.5)(2,1)
   \psline[linewidth=.1,linecolor=gray]{*-*}(0,0)(1,0)
   \psline[linewidth=.1,linecolor=gray]{*-*}(0,2)(1,2)
   \psline[linewidth=.1,linecolor=gray]{*-*}(3,0)(3,1)
   \psline(0,0)(3,0)(3,2)(0,2)(0,0)
   \psline(1,0)(1,2)\psline(2,0)(2,2)
   \psline(0,1)(3,1)
   \psline(0,0)(1,1)
   \psline(0,1)(1,2)
   \psline(1,1)(2,0)
   \psline(3,0)(2,1)(3,2)
   \psline(2,2)(1,1)
   \pscircle[fillstyle=solid,fillcolor=white](0,0){.07}
   \pscircle[fillstyle=solid,fillcolor=white](1,2){.07}
   \pscircle[fillstyle=solid,fillcolor=white](3,1){.07}
   \put(1.5,-.5){\psline[linewidth=.1,arrowsize=.3]{->}(0,0)(0,-1)}
   }

   \put(7,1){
   \put(-1,2){$(5)$}
   \pspolygon[fillstyle=solid,fillcolor=yellow,linewidth=0,linecolor=white](2.5,0.5)(2,0)(1.5,.5)(2,1)
   \psline[linewidth=.1,linecolor=gray]{*-*}(0,0)(1,0)
   \psline[linewidth=.1,linecolor=gray]{*-*}(0,2)(1,2)
   \psline[linewidth=.1,linecolor=gray]{*-*}(3,0)(3,1)
   \psline(0,0)(3,0)(3,2)(0,2)(0,0)
   \psline(1,0)(1,2)\psline(2,0)(2,2)
   \psline(0,1)(3,1)
   \psline(0,0)(1,1)
   \psline(0,1)(1,2)
   \psline(1,0)(2,1)
   \psline(2,1)(3,2)
   \psline(2,0)(3,1)
   \psline(2,2)(1,1)
   \pscircle[fillstyle=solid,fillcolor=white](0,0){.07}
   \pscircle[fillstyle=solid,fillcolor=white](1,2){.07}
   \pscircle[fillstyle=solid,fillcolor=white](3,1){.07}
   }

  \end{pspicture}
  \caption{any dimer covering(1) is transformed to 
the specific one(5)}
  \label{fig:2-3-4}
 \end{center}

\end{figure}
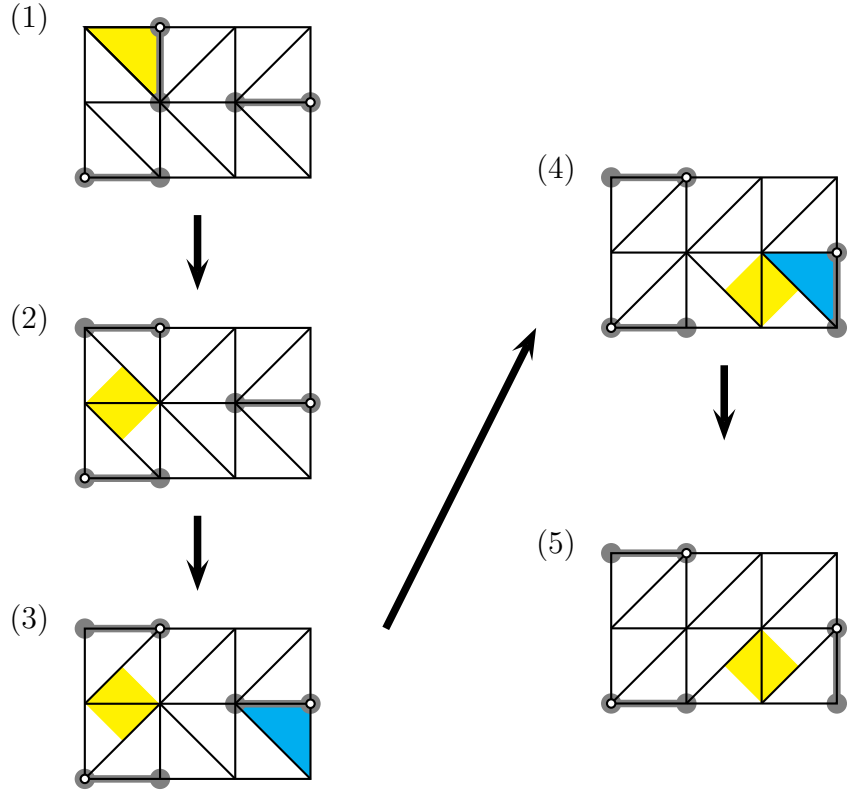

\subsection{Application to other graphs}
Theorem \ref{pairing}
can also be applied 
to the Bow-tie lattice and the triangular lattice. \\

\noindent
(1)
Bow-tie lattice\\
Let 
$G_B$
be the Bow-tie lattice
(Figure \ref{fig:2-4-1}(1))
which is obtained by removing vertical edges which connect vertices in 
$V_1$, $V_2$
from those in 
$G$. 
Theorem \ref{pairing}
can be directly applied, except that impurities in the vertical direction are not allowed. 
Figure \ref{fig:2-4-1}(2)
shows an example of dimer covering and the corresponding spanning forest of 
$G_1$, $G_2$. \\
%

\begin{figure}[H]
 \begin{center}
  \begin{pspicture}(10,14)
   \put(0,9){
   \psset{unit=2}
   \put(-.5,3.5){$(1)$}
   \multiput(0,0)(0,1){4}{
   \psline(0,0)(4,0)
   }
   {\psset{linestyle=dotted}
   \psline(1,0)(0,1)
   \psline(2,0)(0,2)
   \psline(3,0)(0,3)
   \psline(4,0)(1,3)
   \psline(4,1)(2,3)
   \psline(4,2)(3,3)
   \psline(0,2)(1,3)
   \psline(0,1)(2,3)
   \psline(0,0)(3,3)
   \psline(1,0)(4,3)
   \psline(2,0)(4,2)
   \psline(3,0)(4,1)
   }
   \multiput(0,0)(2,0){2}{
   \multiput(0,0)(0,2){2}{
   \pscircle[fillstyle=solid,fillcolor=white](1,0){.07}
   \pscircle[fillstyle=solid,fillcolor=white](0,1){.07}
   \pscircle[fillstyle=solid,fillcolor=black](0,0){.07}
   \pscircle[fillstyle=solid,fillcolor=black](1,1){.07}
   }
   }
   \pscircle[fillstyle=solid,fillcolor=black](4,0){.07}
   \pscircle[fillstyle=solid,fillcolor=black](4,2){.07}
   \pscircle[fillstyle=solid,fillcolor=white](4,1){.07}
   \pscircle[fillstyle=solid,fillcolor=white](4,3){.07}

   \multiput(0,0)(1,0){4}{
   \multiput(0,0)(0,1){3}{
   \psdot(0.5,0.5)}
   }

   }

   \put(0,0){
   \psset{unit=2}
   \put(-.5,3.5){$(2)$}
   \psline[linewidth=.1,linecolor=gray]{*-*}(0,0)(1,0)
   \psline[linewidth=.05,linecolor=black]{*-*}(0,1)(.5,.5)
   \psline[linewidth=.05,linecolor=black]{*-*}(0,2)(.5,1.5)
   \psline[linewidth=.1,linecolor=gray]{*-*}(0,3)(1,3)
   \psline[linewidth=.05,linecolor=black]{*-*}(0.5,2.5)(1,2)
   \psline[linewidth=.1,linecolor=gray]{*-*}(3,0)(4,0)
   \psline[linewidth=.1,linecolor=gray]{*-*}(3,3)(4,3)
   \put(1,1){\psline[linewidth=.05,linecolor=black]{*-*}(0,0)(.5,-.5)}
   \put(1.5,1.5){\psline[linewidth=.05,linecolor=black]{*-*}(0,0)(.5,-.5)}
   \put(3,1){\psline[linewidth=.05,linecolor=black]{*-*}(0,0)(.5,-.5)}
   \put(3.5,1.5){\psline[linewidth=.05,linecolor=black]{*-*}(0,0)(.5,-.5)}
   \put(3.5,2.5){\psline[linewidth=.05,linecolor=black]{*-*}(0,0)(.5,-.5)}
   \put(2,0){\psline[linewidth=.05,linecolor=black]{*-*}(0,0)(.5,.5)}
   \put(2.5,1.5){\psline[linewidth=.05,linecolor=black]{*-*}(0,0)(.5,.5)}
   \put(2,2){\psline[linewidth=.05,linecolor=black]{*-*}(0,0)(.5,.5)}
   \put(1.5,2.5){\psline[linewidth=.05,linecolor=black]{*-*}(0,0)(.5,.5)}
   \multiput(0,0)(0,1){4}{
   \psline(0,0)(4,0)
   }
   {\psset{linestyle=dotted}
   \psline(1,0)(0,1)
   \psline(2,0)(0,2)
   \psline(3,0)(0,3)
   \psline(4,0)(1,3)
   \psline(4,1)(2,3)
   \psline(4,2)(3,3)
   \psline(0,2)(1,3)
   \psline(0,1)(2,3)
   \psline(0,0)(3,3)
   \psline(1,0)(4,3)
   \psline(2,0)(4,2)
   \psline(3,0)(4,1)
   }
   \multiput(0,0)(2,0){2}{
   \multiput(0,0)(0,2){2}{
   \pscircle[fillstyle=solid,fillcolor=white](1,0){.07}
   \pscircle[fillstyle=solid,fillcolor=white](0,1){.07}
   \pscircle[fillstyle=solid,fillcolor=black](0,0){.07}
   \pscircle[fillstyle=solid,fillcolor=black](1,1){.07}
   }
   }
   \pscircle[fillstyle=solid,fillcolor=black](4,0){.07}
   \pscircle[fillstyle=solid,fillcolor=black](4,2){.07}
   \pscircle[fillstyle=solid,fillcolor=white](4,1){.07}
   \pscircle[fillstyle=solid,fillcolor=white](4,3){.07}

   \multiput(0,0)(1,0){4}{
   \multiput(0,0)(0,1){3}{
   \psdot(0.5,0.5)}
   }

   }
   
  \end{pspicture}
  \caption{(1) Bow-tie lattice, 
(2) an example of dimer covering and the corresponding spanning forest}
  \label{fig:2-4-1}
 \end{center}
\end{figure}

\noindent
Since 
$V(G_B) = V(G)$
and 
$E(G_B) \subset E(G)$, 
the set 
${\cal D}(G_B)$
of the dimer coverings of 
$G_B$
satisfies 
${\cal D}(G_B) \subset {\cal D}(G)$, 
from which we deduce the following two facts. 

\noindent
(i)
Define t-move on 
$G_B$
as is descibed in 
Figure \ref{fig:2-4-2} (1), (2) \\
%

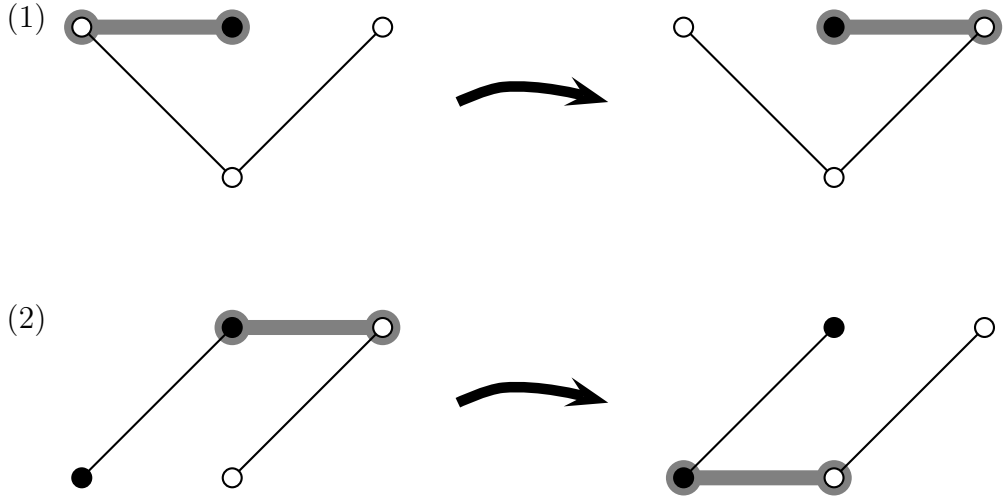
\begin{figure}[H]
 \begin{center}
  \begin{pspicture}(12,7)
   \put(0,4)
   {
   \psset{unit=2}
   \put(-.5,1){$(1)$}
   \psline[linewidth=.1,linecolor=gray]{*-*}(0,1)(1,1)
   \psline(0,1)(1,0)(2,1)
   \pscircle[fillstyle=solid,fillcolor=white](0,1){.07}
   \pscircle[fillstyle=solid,fillcolor=white](1,0){.07}
   \pscircle[fillstyle=solid,fillcolor=white](2,1){.07}
   \pscircle[fillstyle=solid,fillcolor=black](1,1){.07}
   \put(2.5,.5){\pscurve[linewidth=.07,arrowsize=.2]{->}(0,0)(.3,.1)(1,0)}

   \put(4,0){
   \psline[linewidth=.1,linecolor=gray]{*-*}(2,1)(1,1)
   \psline(0,1)(1,0)(2,1)
   \pscircle[fillstyle=solid,fillcolor=white](0,1){.07}
   \pscircle[fillstyle=solid,fillcolor=white](1,0){.07}
   \pscircle[fillstyle=solid,fillcolor=white](2,1){.07}
   \pscircle[fillstyle=solid,fillcolor=black](1,1){.07}
   }
   }

   \put(0,0)
   {
   \psset{unit=2}
   \put(-.5,1){$(2)$}
   \psline[linewidth=.1,linecolor=gray]{*-*}(2,1)(1,1)
   \psline(0,0)(1,1)\psline(1,0)(2,1)
   \pscircle[fillstyle=solid,fillcolor=white](1,0){.07}
   \pscircle[fillstyle=solid,fillcolor=white](2,1){.07}
   \pscircle[fillstyle=solid,fillcolor=black](1,1){.07}
   \pscircle[fillstyle=solid,fillcolor=black](0,0){.07}
   \put(2.5,.5){\pscurve[linewidth=.07,arrowsize=.2]{->}(0,0)(.3,.1)(1,0)}

   \put(4,0){
   \psline[linewidth=.1,linecolor=gray]{*-*}(0,0)(1,0)
   \psline(0,0)(1,1)\psline(1,0)(2,1)
   \pscircle[fillstyle=solid,fillcolor=white](1,0){.07}
   \pscircle[fillstyle=solid,fillcolor=white](2,1){.07}
   \pscircle[fillstyle=solid,fillcolor=black](1,1){.07}
   \pscircle[fillstyle=solid,fillcolor=black](0,0){.07}
   }
   }
   
  \end{pspicture}
  \caption{
  definition of t-moves for $G_B$.
Solid lines are spanning forests of $G_1$ or $G_2$
  }
  \label{fig:2-4-2}
 \end{center}
\end{figure}

\noindent
which is regarded as the composition of two t-moves for 
$G$. 
S-move
is the same as that for 
$G$.
Then we have the local move connectedness also for 
$G_B$. 
\\
(ii)
If we specify the location of impurities, 
the number of dimer coverings on 
$G$ 
and 
$G_B$
are equal. 
Hence 
a proof of conjecture 
$(*)$ 
for 
$G$ 
would also prove that for
$G_B$. \\

\noindent
(2)
Triangular lattice\\
The triangular lattice
$G_T$
is obtained by adding to 
$G_B$ 
the horizontal edges connecting vertices in 
$V_3$
(Figure \ref{fig:2-4-3}).
\\
%

\begin{figure}[H]
 \begin{center}
  \begin{pspicture}(10,7)
   \psset{unit=2}
   \multiput(0,0)(0,1){4}{
   \psline(0,0)(4,0)
   }
   \multiput(0,0)(0,1){3}{
   \psline[linestyle=dashed](0,0.5)(4,0.5)
   }
   {\psset{linestyle=dotted}
   \psline(1,0)(0,1)
   \psline(2,0)(0,2)
   \psline(3,0)(0,3)
   \psline(4,0)(1,3)
   \psline(4,1)(2,3)
   \psline(4,2)(3,3)
   \psline(0,2)(1,3)
   \psline(0,1)(2,3)
   \psline(0,0)(3,3)
   \psline(1,0)(4,3)
   \psline(2,0)(4,2)
   \psline(3,0)(4,1)
   }
   \multiput(0,0)(2,0){2}{
   \multiput(0,0)(0,2){2}{
   \pscircle[fillstyle=solid,fillcolor=white](1,0){.07}
   \pscircle[fillstyle=solid,fillcolor=white](0,1){.07}
   \pscircle[fillstyle=solid,fillcolor=black](0,0){.07}
   \pscircle[fillstyle=solid,fillcolor=black](1,1){.07}
   }
   }
   \pscircle[fillstyle=solid,fillcolor=black](4,0){.07}
   \pscircle[fillstyle=solid,fillcolor=black](4,2){.07}
   \pscircle[fillstyle=solid,fillcolor=white](4,1){.07}
   \pscircle[fillstyle=solid,fillcolor=white](4,3){.07}

   \multiput(0,0)(1,0){4}{
   \multiput(0,0)(0,1){3}{
   \psdot(0.5,0.5)}
   }

  \end{pspicture}
  \caption{Triangular lattice. We have edges connecting vertices in 
$E_3$
(-~-~-)lines }
  \label{fig:2-4-3}
 \end{center}
\end{figure}
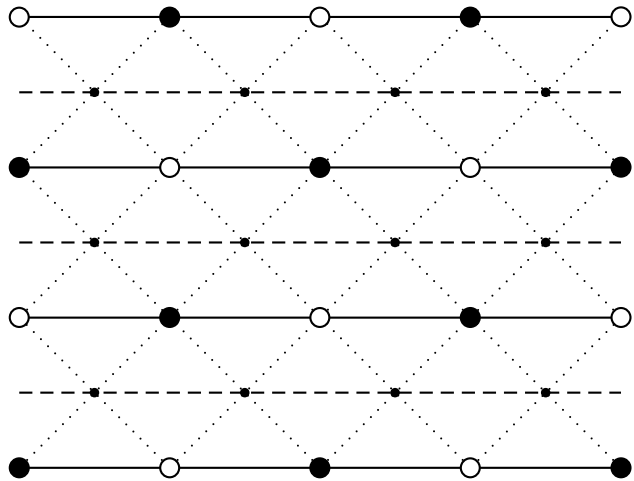

\noindent
Thus 
we can put impurities on edges belonging to 
$E_3 := \{ (x,y) \in E(G_T) \; : \; x,y \in V_3 \}$. 
If we specify the location of them, 
then 
we can cover the rest as for 
$G_B$ 
by regarding the impurities in 
$E_3$
as holes
(Figure \ref{fig:2-4-4}). 
\\
%

\begin{figure}[H]
 \begin{center}
  \begin{pspicture}(10,17)
   \psset{unit=2}
   \put(0,4.5){
   \put(-.3,3.5){$(1)$}
   \psline[dotsize=.15,linecolor=gray,linewidth=.1]{*-*}(1.5,1.5)(2.5,1.5)
   \multiput(0,0)(0,1){4}{
   \psline(0,0)(4,0)
   }
   \multiput(0,0)(0,1){3}{
   \psline[linestyle=dashed](0,0.5)(4,0.5)
   }
   {\psset{linestyle=dotted}
   \psline(1,0)(0,1)
   \psline(2,0)(0,2)
   \psline(3,0)(0,3)
   \psline(4,0)(1,3)
   \psline(4,1)(2,3)
   \psline(4,2)(3,3)
   \psline(0,2)(1,3)
   \psline(0,1)(2,3)
   \psline(0,0)(3,3)
   \psline(1,0)(4,3)
   \psline(2,0)(4,2)
   \psline(3,0)(4,1)
   }
   \multiput(0,0)(2,0){2}{
   \multiput(0,0)(0,2){2}{
   \pscircle[fillstyle=solid,fillcolor=white](1,0){.07}
   \pscircle[fillstyle=solid,fillcolor=white](0,1){.07}
   \pscircle[fillstyle=solid,fillcolor=black](0,0){.07}
   \pscircle[fillstyle=solid,fillcolor=black](1,1){.07}
   }
   }
   \pscircle[fillstyle=solid,fillcolor=black](4,0){.07}
   \pscircle[fillstyle=solid,fillcolor=black](4,2){.07}
   \pscircle[fillstyle=solid,fillcolor=white](4,1){.07}
   \pscircle[fillstyle=solid,fillcolor=white](4,3){.07}

   \multiput(0,0)(1,0){4}{
   \multiput(0,0)(0,1){3}{
   \psdot(0.5,0.5)}
   }
   }

   \psline[dotsize=.15,linecolor=gray,linewidth=.1]{*-*}(1.5,1.5)(2.5,1.5)
   \psline[dotsize=.15,linecolor=lightgray,linewidth=.1]{*-*}(0,0)(1,0)
   \psline[dotsize=.15,linecolor=lightgray,linewidth=.1]{*-*}(3,0)(4,0)
   \psline[dotsize=.15,linecolor=lightgray,linewidth=.1]{*-*}(0,2)(1,2)
   \psline[dotsize=.15,linecolor=lightgray,linewidth=.1]{*-*}(1,3)(2,3)
   \psline[dotsize=.15,linecolor=lightgray,linewidth=.1]{*-*}(3,3)(4,3)
   \psline[dotsize=.15,linecolor=lightgray,linewidth=.1]{*-*}(2,0)(1.5,.5)
   \psline[dotsize=.15,linecolor=lightgray,linewidth=.1]{*-*}(2.5,0.5)(2,1)
   \put(3.5,.5){\psline[dotsize=.15,linecolor=lightgray,linewidth=.1]{*-*}(0,0)(-.5,.5)}
   \put(4,1){\psline[dotsize=.15,linecolor=lightgray,linewidth=.1]{*-*}(0,0)(-.5,.5)}
   \put(4,2){\psline[dotsize=.15,linecolor=lightgray,linewidth=.1]{*-*}(0,0)(-.5,.5)}
   \put(3,2){\psline[dotsize=.15,linecolor=lightgray,linewidth=.1]{*-*}(0,0)(-.5,.5)}
   \put(.5,2.5){\psline[dotsize=.15,linecolor=lightgray,linewidth=.1]{*-*}(0,0)(-.5,.5)}
   \put(.5,.5){\psline[dotsize=.15,linecolor=lightgray,linewidth=.1]{*-*}(0,0)(.5,.5)}
   \put(0,1){\psline[dotsize=.15,linecolor=lightgray,linewidth=.1]{*-*}(0,0)(.5,.5)}

   \psline(0,0)(1,1)(1,2)(0,1)
   \psline(1,2)(0,3)
   \psline(1,3)(2,2)
   \psline(2,3)(4,1)
   \psline(3,3)(4,2)
   \psline(3,1)(4,0)
   \psline(2,1)(3,0)
   \psline(1,1)(2,0)
   \psline(1,1)(3,1)(3,2)(1,2)
   {\psset{linestyle=dotted}
   \psline(1,0)(0,1)
   \psline(2,0)(0,2)
   \psline(3,0)(0,3)
   \psline(4,0)(1,3)
   \psline(4,1)(2,3)
   \psline(4,2)(3,3)
   \psline(0,2)(1,3)
   \psline(0,1)(2,3)
   \psline(0,0)(3,3)
   \psline(1,0)(4,3)
   \psline(2,0)(4,2)
   \psline(3,0)(4,1)
   }
   \multiput(0,0)(2,0){2}{
   \multiput(0,0)(0,2){2}{
   \pscircle[fillstyle=solid,fillcolor=white](1,0){.07}
   \pscircle[fillstyle=solid,fillcolor=white](0,1){.07}
   \pscircle[fillstyle=solid,fillcolor=black](0,0){.07}
   \pscircle[fillstyle=solid,fillcolor=black](1,1){.07}
   }
   }
   \pscircle[fillstyle=solid,fillcolor=black](4,0){.07}
   \pscircle[fillstyle=solid,fillcolor=black](4,2){.07}
   \pscircle[fillstyle=solid,fillcolor=white](4,1){.07}
   \pscircle[fillstyle=solid,fillcolor=white](4,3){.07}

   \multiput(0,0)(1,0){4}{
   \multiput(0,0)(0,1){3}{
   \psdot(0.5,0.5)}
   }
   \put(-.3,3.5){$(2)$}

  \end{pspicture}
  \caption{(1) If we specify the location of dimers in 
$E_3$, 
(2) then the rest can be covered, regarding the impurities in 
$E_3$
as holes.}
  \label{fig:2-4-4}
 \end{center}
\end{figure}

\noindent
We note that,  
if the number of impurities in 
$E_3$
is equal to 
$l$, 
then those for the rest (graph with hole) increases by 
$l$. 
For instance, 
when the size of 
$G_T$
is 
$(m, n)$, 
then the number of impurities for the rest is equal to 
$\frac {m+n+1}{2} + l$.
\\
%
\section{Local move connectedness}
In this section 
we show that both 
$G^{(m,n)}$ 
and 
$G^{(k)}$ 
have LMC. 
Let 
$N:=| V(G) |$ 
be the volume of 
$G$, 
and for 
$G = G^{(m,n)}$ 
let 
$k :=(m+n+1)/2 \;(=O (N^{1/2}), N \gg 1)$ 
be the number of impurities. 
\begin{theorem}
For 
$G^{(m,n)}, G^{(k)}$, 
any two dimer coverings can be transformed each other by successive application of local moves of 
$O(k^2 N^{3/2})$-steps.  
\label{LMC}
\end{theorem}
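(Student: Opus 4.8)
The plan is to exploit reversibility: both moves are reversible, the inverse of a move being again a move of the same type, so it suffices to fix one canonical covering $D_\ast$ --- the ``all trees parallel'' configuration of Figure \ref{fig:2-3-4}(5), whose $k$ impurities sit on prescribed boundary edges --- and to show that every $D\in{\cal D}(G)$ can be driven to $D_\ast$ in $O(k^2N^{3/2})$ moves. Concatenating $D\to D_\ast$ with the reverse of $D'\to D_\ast$ then connects any two coverings within twice that bound, which is the assertion of Theorem \ref{LMC}. Throughout I would work in the slit-curve / spanning-forest dictionary of Theorem \ref{pairing} (resp. Theorem \ref{k impurity case}), together with the Proposition that t-moves slide impurities along slit curves while fixing the curves, and s-moves locally recombine the curves while fixing the impurities. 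In this language the target is to straighten the slit-curve system into the canonical parallel strips and to park each impurity at its canonical slot.

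I would first count the s-moves, which carry the geometric content. By Proposition \ref{properties of lines} each domain is a subtree of $G_1$ or $G_2$, so the object to be normalised is a pair of spanning forests of the planar graphs $G_1,G_2$. Two spanning forests differ in at most $O(N)$ edges and can be joined by $O(N)$ elementary tree rotations; since an s-move is purely local, a single planar rotation is realised by a path of at most $O(\mathrm{diam}\,G)=O(N^{1/2})$ consecutive s-moves. This gives $O(N\cdot N^{1/2})=O(N^{3/2})$ s-moves, and a height-function monovariant for the $E_1$-dimers on the underlying square sublattice ${\cal G}'_0$ (the Thurston-type potential) certifies both termination and that no flip is wasted.

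The delicate part, and the source of the factor $k^2$, is the coupling to impurities. An s-move at a site is legal only when the two adjacent domains there are distinct trees carrying antiparallel dimers, exactly the criterion illustrated in Figure \ref{fig:2-3-3}(2); to create this locally I would first reposition the relevant impurities by t-moves along their slit curves. A slit curve has length at most $O(N)=O(k^2)$ (since $(m+n)^2\ge 4mn$ gives $N=O(k^2)$), so one preparation costs $O(k^2)$ t-moves. Charging one preparation to each of the $O(N^{3/2})$ flip-enabling s-moves yields $O(k^2N^{3/2})$ t-moves, with the s-moves subdominant; a final $O(kN)=O(N^{3/2})$ block of t-moves slides the impurities to their prescribed edges, Proposition \ref{properties of lines}(0) guaranteeing each reaches the boundary.

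The main obstacle I anticipate is exactly this bookkeeping: one must ensure that the t-move preparations do not undo the forest-normalisation already achieved, so that the two phases cannot cascade into a super-polynomial loop. I would control this with a lexicographic monovariant whose primary term is the height-function discrepancy of the $E_1$-dimers on ${\cal G}'_0$ and whose secondary term is the total displacement of the impurities from their canonical slots, and then verify that each block of (bounded preparation) followed by (one s-move) strictly decreases it; Proposition \ref{opposite way} guarantees that every intermediate state is a genuine covering, so the construction never leaves ${\cal D}(G)$. Making precise the strict decrease of this monovariant, and the $O(N^{1/2})$ locality of a single tree rotation, are the two estimates that pin down the exponent in Theorem \ref{LMC}.
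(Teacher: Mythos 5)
There is a genuine gap, and it sits exactly at the two points you flag as ``the estimates to be made precise.'' First, the primary term of your lexicographic monovariant --- a Thurston-type height function for the $E_1$-dimers on ${\cal G}'_0$ --- is not well defined while impurities remain in the interior: the $E_1$-dimers cover only $V(G)$ minus the endpoints of the $E_2$-dimers, so each impurity is a defect around which the height is multivalued (the paper's introduction names the absence of an appropriate height function as \emph{the} difficulty of the non-bipartite case). A t-move displaces such a defect and changes the height increment along any curve separating its old and new positions, so your block ``bounded preparation followed by one s-move'' has no proven strict decrease; without it the charging scheme can cascade and the count $O(k^2N^{3/2})$ is unsupported. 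Second, the claim that one elementary tree rotation of the spanning forest is realised by $O(N^{1/2})$ consecutive s-moves is not a consequence of planarity --- it is essentially the content of local-move connectedness itself, and you give no mechanism guaranteeing that each intermediate state is a dimer covering in which the next flip is legal: by Figure \ref{fig:2-3-3}(2) legality depends on the two domains being distinct trees with antiparallel dimers, i.e.\ on the pairing data of condition (P)/(Q), which your rotations can destroy. Finally, for $G^{(m,n)}$ (no terminals) you never show that boundary impurities can be exchanged among boundary positions, which is needed to reach the canonical slots of $D_\ast$.

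The paper avoids all three problems by decoupling the phases rather than interleaving them. Every covering is first driven into ${\cal D}_B(G)$ by t-moves along slit curves, which terminate at the boundary by Proposition \ref{properties of lines}(0),(3). With the impurity configuration $\{e_j\}_{j=1}^k$ frozen on boundary edges, the problem becomes an honest domino tiling of $G\setminus\{e_j\}_{j=1}^k$, and the known LMC theorem for domino tilings \cite{T,LRS} --- where the height function is genuinely available --- gives connectivity in $O(N^{3/2})$ s-moves. The remaining step, passing from $\{e_j\}$ to $\{e'_j\}$, is done by explicit constructions in the dictionary of Theorem \ref{k impurity case}: building a TI-tree at a terminal swaps the two impurity slots there, and building a $T^2I$-tree spanning two adjacent terminals transfers an impurity to a vacant neighbour, each in $O(N^{3/2})$ moves, repeated at most $O(k^2)$ times; the case $G^{(m,n)}$ is then reduced to $G^{(k)}$ by an embedding under which ${\cal D}_B(G)$ and ${\cal D}_B(G')$ correspond bijectively. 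If you wish to rescue your interleaved scheme, you would need to (i) define the height only on the complement of the impurity endpoints and control its branch-point jumps under t-moves, and (ii) prove the rotation-by-flips lemma with pairing-preserving intermediate states --- at which point you will have re-derived precisely the ingredients the paper either quotes or constructs directly.
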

\begin{proof}
(1)
We first show LMC for 
$G:= G^{(k)}$. 
For any 
$\{ e_j \}_{j=1}^k \subset E_2(G)$, 
let 
\begin{eqnarray*}
&&{\cal D}(G ; \{ e_j \}_{j=1}^k)
:=
 \{
M \in {\cal D}(G) \, : \, 
\mbox{ impurities are on 
$\{ e_j \}_{j=1}^k$ }
\}
\\
&&{\cal D}_B (G)
:=
\{
M \in {\cal D}(G) \, : \, 
\mbox{ all impurities are on the boundary }
\}.
\end{eqnarray*}
We denote by 
$E_2(G) \cap \partial G$ 
the set of edges on the boundary of $G$. 
Then clearly we have 
\begin{eqnarray*}
{\cal D}_B (G)
&=&
\bigcup_{ \{ e_j \}_{j=1}^k \subset E_2(G) \cup \partial G}
{\cal D}(G ; \{ e_j \}_{j=1}^k).
\end{eqnarray*}
In any dimer covering, 
impurities can always be moved to the boundary by applying t-moves. 
Hence any dimer coverings 
are connected to those in  
${\cal D}_B (G)$ 
via local moves 
\footnote{
We say that 
$M, M' \in {\cal D}(G)$ 
are connected if they can be transformed via the local moves.}.
If we put 
all impurities on the boundary and fix them, say on 
$\{ e_j \}_{j=1}^k \subset E_2(G) \cap \partial G$, 
then 
our dimer problem is reduced to that of the domino tiling on 
$G \setminus \{ e_j \}_{j=1}^k$
(Figure \ref{fig:3-1}) which are connected via s-moves \cite{T, LRS}, 
in 
$O(N^{3/2})$-steps, 
implying that elements in 
${\cal D}(G ; \{ e_j \}_{j=1}^k)$ 
are connected each other for 
$\{ e_j \}_{j=1}^k \subset E_2(G) \cap \partial G$. 
It then suffices to show that an element in 
${\cal D}(G ; \{ e_j \}_{j=1}^k)$ 
is connected to some element in 
${\cal D}(G ; \{ e'_j \}_{j=1}^k)$ 
for any 
$\{ e_j \}_{j=1}^k$, $\{ e'_j \}_{j=1}^k 
\subset E_2(G) \cap \partial G$, 
unless 
${\cal D}(G ; \{ e_j \}_{j=1}^k)=\emptyset$ 
or 
${\cal D}(G ; \{ e'_j \}_{j=1}^k)=\emptyset$. \\
%

\begin{figure}[H]
 \begin{center}
  \begin{pspicture}(6,7)(0,-2.5)
   \psset{unit=2}
   \put(1,0){\psline[dotsize=0.2,linewidth=0.15,linecolor=lightgray]{*-*}(0,0)(0,.5)}
   \put(.5,0.5){\psline[dotsize=0.2,linewidth=0.15,linecolor=lightgray]{*-*}(0,0)(0,.5)}
   \put(.5,1.5){\psline[dotsize=0.2,linewidth=0.15,linecolor=lightgray]{*-*}(0,0)(0,.5)}
   \put(1,2){\psline[dotsize=0.2,linewidth=0.15,linecolor=lightgray]{*-*}(0,0)(0,.5)}
   \put(1.5,2){\psline[dotsize=0.2,linewidth=0.15,linecolor=lightgray]{*-*}(0,0)(0,.5)}
   \put(1.5,-.5){\psline[dotsize=0.2,linewidth=0.15,linecolor=lightgray]{*-*}(0,0)(0,.5)}
   \put(2,1.){\psline[dotsize=0.2,linewidth=0.15,linecolor=lightgray]{*-*}(0,0)(0,.5)}
   \put(2.5,1.){\psline[dotsize=0.2,linewidth=0.15,linecolor=lightgray]{*-*}(0,0)(0,.5)}
   \put(0,0){\psline[dotsize=0.2,linewidth=0.15,linecolor=lightgray]{*-*}(0,0)(.5,0)}
   \put(0.5,-0.5){\psline[dotsize=0.2,linewidth=0.15,linecolor=lightgray]{*-*}(0,0)(.5,0)}
   \put(1.5,0.5){\psline[dotsize=0.2,linewidth=0.15,linecolor=lightgray]{*-*}(0,0)(.5,0)}
   \put(2.5,0.5){\psline[dotsize=0.2,linewidth=0.15,linecolor=lightgray]{*-*}(0,0)(.5,0)}
   \put(3,1){\psline[dotsize=0.2,linewidth=0.15,linecolor=lightgray]{*-*}(0,0)(.5,0)}
   \put(3.,1.5){\psline[dotsize=0.2,linewidth=0.15,linecolor=lightgray]{*-*}(0,0)(.5,0)}
   \put(1,1){\psline[dotsize=0.2,linewidth=0.15,linecolor=lightgray]{*-*}(0,0)(.5,0)}
   \put(1.,1.5){\psline[dotsize=0.2,linewidth=0.15,linecolor=lightgray]{*-*}(0,0)(.5,0)}
   \put(2,2){\psline[dotsize=0.2,linewidth=0.15,linecolor=lightgray]{*-*}(0,0)(.5,0)}
   \put(2.,2.5){\psline[dotsize=0.2,linewidth=0.15,linecolor=lightgray]{*-*}(0,0)(.5,0)}
   \put(3.5,.5){\psline[dotsize=0.2,linewidth=0.15,linecolor=lightgray]{*-*}(0,0)(.5,0.5)}
   \put(.5,2.5){\psline[dotsize=0.2,linewidth=0.15,linecolor=lightgray]{*-*}(0,0)(.5,0.5)}

   \put(0.5,-0.5){\psline[linewidth=0.04](0,0)(1,0)(1,1)(2.5,1)(2.5,1.5)(3,1.5)(3,2)(2,2)(2,3)(.5,3)(.5,2.5)(0,2.5)(0,1)(-.5,.5)(0,0)}
   \pscircle[fillstyle=solid,fillcolor=white,linecolor=white](2,4){.2}
   \put(1.9,3.95){\large$\bigstar$}
   \put(.5,-.5){
   \psline(0,0)(2,2)(3,1)(3.5,1.5)(3,2)(2,1)(0,3)(.5,3.5)(1,3)(0,2)(1,1)}
   \psline(0,0)(2.5,2.5)
   \psline(1.5,2.5)(2.5,1.5)
   \psline(.5,.5)(1.5,-.5)
   \psline(0,0)(.5,-.5)
   \put(.5,-.5){
   \multiput(0,0)(0,1){4}{\pscircle[fillstyle=solid,fillcolor=black](0,0){.1}}
   \multiput(1,0)(0,1){4}{\pscircle[fillstyle=solid,fillcolor=black](0,0){.1}}
   \multiput(2,1)(0,1){3}{\pscircle[fillstyle=solid,fillcolor=black](0,0){.1}}
   \multiput(3,1)(0,1){2}{\pscircle[fillstyle=solid,fillcolor=black](0,0){.1}}
   \psset{linestyle=dashed}
   \psline(0,0)(0,3) \psline(1,0)(1,3) \psline(2,1)(2,3)
   \psline(3,1)(3,2)
   \psline(0,0)(1,0)\psline(0,1)(3,1)\psline(0,2)(3,2)\psline(0,3)(2,3)
   }
   {
   \psset{linestyle=dashed}
   \psline(0,0)(1.5,0)\psline(1,-.5)(1,3)\psline(0.5,1)(4,1)\psline(.5,2)(2.5,2)\psline(2,2.5)(2,.5)
   \psline(3,.5)(3,1.5)
   \psset{linestyle=solid}
   \pscircle[fillstyle=solid,fillcolor=white](0,0){.1}
   \pscircle[fillstyle=solid,fillcolor=white](0,0){.05}
   \pscircle[fillstyle=solid,fillcolor=white](1,3){.1}
   \pscircle[fillstyle=solid,fillcolor=white](1,3){.05}
   \pscircle[fillstyle=solid,fillcolor=white](1,0){.1}
   \pscircle[fillstyle=solid,fillcolor=white](1,1){.1}
   \pscircle[fillstyle=solid,fillcolor=white](1,2){.1}
   \pscircle[fillstyle=solid,fillcolor=white](2,1){.1}
   \pscircle[fillstyle=solid,fillcolor=white](2,2){.1}
   \pscircle[fillstyle=solid,fillcolor=white](3,1){.1}
   \pscircle[fillstyle=solid,fillcolor=white](4,1){.1}
   \pscircle[fillstyle=solid,fillcolor=white](4,1){.05}
   }
   \psdot(0.5,0)\multiput(0,0)(0,1){3}{\psdot(1.,0.5)}
   \multiput(-1,0)(1,0){4}{\psdot(1.5,1)}
   \multiput(-1,1)(1,0){3}{\psdot(1.5,1)}
   \multiput(-1,-1)(1,0){2}{\psdot(1.5,1)}
   \multiput(-.5,-1.5)(0,1){3}{\psdot(1.5,1)}
   \multiput(.5,-.5)(0,1){3}{\psdot(1.5,1)}
   \multiput(1.5,-.5)(0,1){2}{\psdot(1.5,1)}

  \end{pspicture}
  \caption{If we fix all impurities on 
$\{ e_j \}_{j=1}^k \subset E_2(G) \cap \partial G$, 
then 
our dimer problem is reduced to that of the domino tiling on 
$G \setminus \{ e_j \}_{j=1}^k$ 
(thick lines)}
  \label{fig:3-1}
 \end{center}
\end{figure}

We recall that,  
due to the LMC for the domino tiling \cite{T, LRS}, 
all possible configurations of spanning trees of 
$\overline{G_1}$ 
are connected via s-moves, 
provided all impurities are fixed and are on the boundary. 

For any 
$M \in {\cal D}_B (G)$, 
impurities are always on the terminal each of which has two locations.
By making the TI-tree on this terminal, 
which is done in 
$O(N^{3/2})$-steps,  
these two positions can be switched (Figure \ref{fig:3-2}). \\
%

\begin{figure}[H]
 \begin{center}
  \begin{pspicture}(6,7)(0,-2.5)
   \psset{unit=2}
   \put(.5,2.5){\psline[dotsize=0.2,linewidth=0.15,linecolor=lightgray]{*-*}(0,0)(.5,0.5)}

   {\psline[linewidth=0.06](1,2)(1,3)}
   \psline[linecolor=blue,linewidth=0.06](0,0)(1,0)(1,1)(2,1)(2,2.5)
   \psline[linecolor=blue,linewidth=0.06](3,1)(4,1)
   
   \pscurve[arrowsize=.1]{->}(.75,2.7)(.75,1.75)(1.25,1.75)(1.25,2.7)

   \pscircle[fillstyle=solid,fillcolor=white,linecolor=white](2,4){.2}
   \put(1.9,3.95){\large$\bigstar$}
   \put(.5,-.5){
   \psline(0,0)(2,2)(3,1)(3.5,1.5)(3,2)(2,1)(0,3)(.5,3.5)(1,3)(0,2)(1,1)}
   \psline(0,0)(2.5,2.5)
   \psline(1.5,2.5)(2.5,1.5)
   \psline(.5,.5)(1.5,-.5)
   \psline(0,0)(.5,-.5)
   \put(.5,-.5){
   \multiput(0,0)(0,1){4}{\pscircle[fillstyle=solid,fillcolor=black](0,0){.1}}
   \multiput(1,0)(0,1){4}{\pscircle[fillstyle=solid,fillcolor=black](0,0){.1}}
   \multiput(2,1)(0,1){3}{\pscircle[fillstyle=solid,fillcolor=black](0,0){.1}}
   \multiput(3,1)(0,1){2}{\pscircle[fillstyle=solid,fillcolor=black](0,0){.1}}
   \psset{linestyle=dashed}
   \psline(0,0)(0,3) \psline(1,0)(1,3) \psline(2,1)(2,3)
   \psline(3,1)(3,2)
   \psline(0,0)(1,0)\psline(0,1)(3,1)\psline(0,2)(3,2)\psline(0,3)(2,3)
   }
   {
   \psset{linestyle=dashed}
   \psline(0,0)(1.5,0)\psline(1,-.5)(1,3)\psline(0.5,1)(4,1)\psline(.5,2)(2.5,2)\psline(2,2.5)(2,.5)
   \psline(3,.5)(3,1.5)
   \psset{linestyle=solid}
   \pscircle[fillstyle=solid,fillcolor=white](0,0){.1}
   \pscircle[fillstyle=solid,fillcolor=white](0,0){.05}
   \pscircle[fillstyle=solid,fillcolor=white](1,3){.1}
   \pscircle[fillstyle=solid,fillcolor=white](1,3){.05}
   \pscircle[fillstyle=solid,fillcolor=white](1,0){.1}
   \pscircle[fillstyle=solid,fillcolor=white](1,1){.1}
   \pscircle[fillstyle=solid,fillcolor=white](1,2){.1}
   \pscircle[fillstyle=solid,fillcolor=white](2,1){.1}
   \pscircle[fillstyle=solid,fillcolor=white](2,2){.1}
   \pscircle[fillstyle=solid,fillcolor=white](3,1){.1}
   \pscircle[fillstyle=solid,fillcolor=white](4,1){.1}
   \pscircle[fillstyle=solid,fillcolor=white](4,1){.05}
   }
   \psdot(0.5,0)\multiput(0,0)(0,1){3}{\psdot(1.,0.5)}
   \multiput(-1,0)(1,0){4}{\psdot(1.5,1)}
   \multiput(-1,1)(1,0){3}{\psdot(1.5,1)}
   \multiput(-1,-1)(1,0){2}{\psdot(1.5,1)}
   \multiput(-.5,-1.5)(0,1){3}{\psdot(1.5,1)}
   \multiput(.5,-.5)(0,1){3}{\psdot(1.5,1)}
   \multiput(1.5,-.5)(0,1){2}{\psdot(1.5,1)}

  \end{pspicture}
  \caption{Each terminal has two positions for impurities, 
which can be switched by making a TI-tree. 
}
  \label{fig:3-2}
 \end{center}
\end{figure}

On the other hand, 
the impurity on a terminal can be moved to the one in nearest neighbor by making a $T^2$I-tree between these two terminals, provided the nearest neighbor terminal does not have impurity (Figure \ref{fig:3-3}). \\
%

\begin{figure}[H]
 \begin{center}
  \begin{pspicture}(6,7)(0,-2.5)
   \psset{unit=2}
   \put(.5,2.5){\psline[dotsize=0.2,linewidth=0.15,linecolor=lightgray]{*-*}(0,0)(.5,0.5)}
   \put(4,1){\psline[dotsize=0.2,linewidth=0.15,linecolor=lightgray]{*-*}(0,0)(-.5,0.5)}

   \psline[linewidth=0.06](0,0)(1,0)(1,3)
   \psline[linecolor=blue,linewidth=0.06](2,2)(2,1)(4,1)
   
   \pscurve[arrowsize=.1]{->}(.75,2.7)(.8,.5)(.75,.25)(.25,.25)

   \pscircle[fillstyle=solid,fillcolor=white,linecolor=white](2,4){.2}
   \put(1.9,3.95){\large$\bigstar$}
   \put(.5,-.5){
   \psline(0,0)(2,2)(3,1)(3.5,1.5)(3,2)(2,1)(0,3)(.5,3.5)(1,3)(0,2)(1,1)}
   \psline(0,0)(2.5,2.5)
   \psline(1.5,2.5)(2.5,1.5)
   \psline(.5,.5)(1.5,-.5)
   \psline(0,0)(.5,-.5)
   \put(.5,-.5){
   \multiput(0,0)(0,1){4}{\pscircle[fillstyle=solid,fillcolor=black](0,0){.1}}
   \multiput(1,0)(0,1){4}{\pscircle[fillstyle=solid,fillcolor=black](0,0){.1}}
   \multiput(2,1)(0,1){3}{\pscircle[fillstyle=solid,fillcolor=black](0,0){.1}}
   \multiput(3,1)(0,1){2}{\pscircle[fillstyle=solid,fillcolor=black](0,0){.1}}
   \psset{linestyle=dashed}
   \psline(0,0)(0,3) \psline(1,0)(1,3) \psline(2,1)(2,3)
   \psline(3,1)(3,2)
   \psline(0,0)(1,0)\psline(0,1)(3,1)\psline(0,2)(3,2)\psline(0,3)(2,3)
   }
   {
   \psset{linestyle=dashed}
   \psline(0,0)(1.5,0)\psline(1,-.5)(1,3)\psline(0.5,1)(4,1)\psline(.5,2)(2.5,2)\psline(2,2.5)(2,.5)
   \psline(3,.5)(3,1.5)
   \psset{linestyle=solid}
   \pscircle[fillstyle=solid,fillcolor=white](0,0){.1}
   \pscircle[fillstyle=solid,fillcolor=white](0,0){.05}
   \pscircle[fillstyle=solid,fillcolor=white](1,3){.1}
   \pscircle[fillstyle=solid,fillcolor=white](1,3){.05}
   \pscircle[fillstyle=solid,fillcolor=white](1,0){.1}
   \pscircle[fillstyle=solid,fillcolor=white](1,1){.1}
   \pscircle[fillstyle=solid,fillcolor=white](1,2){.1}
   \pscircle[fillstyle=solid,fillcolor=white](2,1){.1}
   \pscircle[fillstyle=solid,fillcolor=white](2,2){.1}
   \pscircle[fillstyle=solid,fillcolor=white](3,1){.1}
   \pscircle[fillstyle=solid,fillcolor=white](4,1){.1}
   \pscircle[fillstyle=solid,fillcolor=white](4,1){.05}
   }
   \psdot(0.5,0)\multiput(0,0)(0,1){3}{\psdot(1.,0.5)}
   \multiput(-1,0)(1,0){4}{\psdot(1.5,1)}
   \multiput(-1,1)(1,0){3}{\psdot(1.5,1)}
   \multiput(-1,-1)(1,0){2}{\psdot(1.5,1)}
   \multiput(-.5,-1.5)(0,1){3}{\psdot(1.5,1)}
   \multiput(.5,-.5)(0,1){3}{\psdot(1.5,1)}
   \multiput(1.5,-.5)(0,1){2}{\psdot(1.5,1)}

  \end{pspicture}
  \caption{the impurity on a terminal can be moved to the neighbor by making a $T^2$I-tree between these two terminals.
}
  \label{fig:3-3}
 \end{center}
\end{figure}

Therefore, 
the impurities can always be moved from a terminal to the vacant one next to it, and repeating this procedure at most $O(k^2)$-steps,  
an element in 
${\cal D}(G ; \{ e_j \}_{j=1}^k)$ 
is connected to some element in 
${\cal D}(G ; \{ e'_j \}_{j=1}^k)$,  
in 
$O(k^2 N^{3/2})$-steps. 
\begin{remark}
It can happen that 
${\cal D}(G ; \{ e_j \}_{j=1}^k)=\emptyset$ 
for some 
$\{ e_j \}_{j=1}^k \subset E(G) \cap \partial G$. 
However, 
it is always possible to avoid such configuration of impurities in the argument above.
\end{remark}
(2)
We next show LMC for 
$G^{(m, n)}$.
In fact, 
it is reduced to that for 
$G^{(k)}$ 
by embedding 
$G=G^{(m,n)}$ 
to 
$G'=G^{(k)}$, 
$k = \frac {m+n+1}{2}$ 
by attaching some vertices on the boundary (Figure \ref{fig:3-4}). 
\\
%

\begin{figure}[H]
 \begin{center}
  \begin{pspicture}(10,8)
   \psline[linestyle=dashed](0,2)(2,0)(4,2)(6,0)(10,4)(6,8)(4,6)(2,8)(0,6)(2,4)(0,2)
   \psline[linestyle=dashed](0,2)(0,0)(2,0)
   \put(4,0){\psline[linestyle=dashed](0,2)(0,0)(2,0)}
   \psline[linestyle=dashed](10,4)(10,6)(8,6)(8,8)(6,8)
   \psline[linestyle=dashed](0,6)(0,8)(2,8)
   \psline(2,2)(8,2)(8,6)(2,6)(2,2)

   \multiput(1,1)(2,0){4}{\psdot(0,0)}
   \multiput(1,1)(0,2){4}{\psdot(0,0)}
   \multiput(1,7)(2,0){4}{\psdot(0,0)}
   \psdot(9,3)\psdot(9,5)
   \multiput(0,0)(4,0){2}{\pscircle[fillstyle=solid,fillcolor=white](0,0){.15}}
   \multiput(2,2)(4,0){2}{\pscircle[fillstyle=solid,fillcolor=white](0,0){.15}}
   \multiput(4,4)(4,0){2}{\pscircle[fillstyle=solid,fillcolor=white](0,0){.15}}
   \multiput(2,6)(4,0){3}{\pscircle[fillstyle=solid,fillcolor=white](0,0){.15}}
   \multiput(0,8)(8,0){2}{\pscircle[fillstyle=solid,fillcolor=white](0,0){.15}}

   \psdots[linecolor=blue,dotstyle=+,dotsize=.3](2,0)(6,0)(0,2)(4,2)(8,2)(2,4)(6,4)(10,4)(0,6)(4,6)(8,6)(2,8)(6,8)
   \put(0,0){\pscircle(0,0){.075}}
   \put(4,0){\pscircle(0,0){.075}}
   \put(10,6){\pscircle(0,0){.075}}
   \put(0,8){\pscircle(0,0){.075}}
   \put(8,8){\pscircle(0,0){.075}}
  \end{pspicture}
  \caption{$G^{(m,n)}$ 
can be embedded to some $G^{(k)}$, $k = \frac {m+n+1}{2}$. 
Solid lines are boundary of 
$G^{(3,2)}$ 
and dotted lines are boundary of 
$G^{(3)}$.}
  \label{fig:3-4}
 \end{center}
\end{figure}
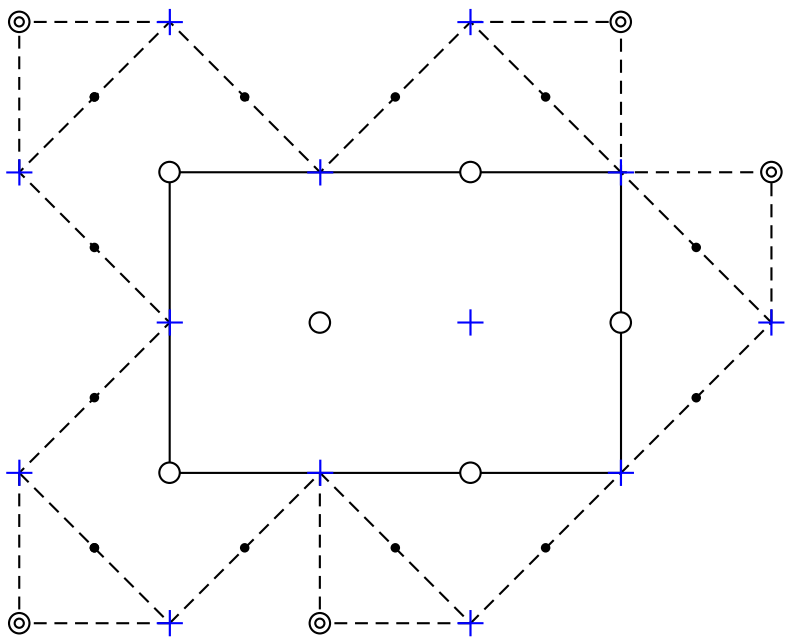

We can specify the dimer within the vertices 
$V(G') \setminus V(G)$, 
so that 
$M \in {\cal D}(G)$ 
can be regarded as 
$M' \in {\cal D}(G')$ (Figure \ref{fig:3-5} (1)). 
In particular, 
the bijection theorem for 
$G^{(k)}$ 
(Theorem \ref{k impurity case}) 
also applies for 
$G^{(m, n)}$. 
\\
%

\begin{figure}[H]
 \begin{center}
  \begin{pspicture}(10,20)
   \put(0,11)
   {
   \put(-0.5,9){$(1)$}
   \psline[linestyle=dashed](2,2)(8,2)(8,6)(2,6)(2,2)
   \multiput(0,0)(2,0){4}{\psline[dotsize=0.3,linewidth=0.3,linecolor=lightgray]{*-*}(0,0)(1,1)}
   \multiput(0,0)(0,2){2}{\psline[dotsize=0.3,linewidth=0.3,linecolor=lightgray]{*-*}(0,0)(1,1)}
   \multiput(0,8)(0,-2){2}{\psline[dotsize=0.3,linewidth=0.3,linecolor=lightgray]{*-*}(0,0)(1,-1)}
   \multiput(0,8)(2,0){2}{\psline[dotsize=0.3,linewidth=0.3,linecolor=lightgray]{*-*}(0,0)(1,-1)}
   \multiput(8,8)(-2,0){2}{\psline[dotsize=0.3,linewidth=0.3,linecolor=lightgray]{*-*}(0,0)(-1,-1)}
   \multiput(10,6)(0,-2){2}{\psline[dotsize=0.3,linewidth=0.3,linecolor=lightgray]{*-*}(0,0)(-1,-1)}

   \psline(0,0)(2,2)
   \psline(4,0)(6,2)
   \psline(8,4)(10,6)
   \psline(6,6)(8,8)
   \psline(2,6)(0,8)
   \multiput(0,0)(4,0){2}{\pscircle[fillstyle=solid,fillcolor=white](0,0){.15}}
   \multiput(2,2)(4,0){2}{\pscircle[fillstyle=solid,fillcolor=white](0,0){.15}}
   \multiput(4,4)(4,0){2}{\pscircle[fillstyle=solid,fillcolor=white](0,0){.15}}
   \multiput(2,6)(4,0){3}{\pscircle[fillstyle=solid,fillcolor=white](0,0){.15}}
   \multiput(0,8)(8,0){2}{\pscircle[fillstyle=solid,fillcolor=white](0,0){.15}}

   \psdots[linecolor=blue,dotstyle=+,dotsize=.3](2,0)(6,0)(0,2)(4,2)(8,2)(2,4)(6,4)(10,4)(0,6)(4,6)(8,6)(2,8)(6,8)
   \put(0,0){\pscircle(0,0){.075}}
   \put(4,0){\pscircle(0,0){.075}}
   \put(10,6){\pscircle(0,0){.075}}
   \put(0,8){\pscircle(0,0){.075}}
   \put(8,8){\pscircle(0,0){.075}}

   }

   \put(-0.5,9){$(2)$}
   \psline[linestyle=dashed](2,2)(8,2)(8,6)(2,6)(2,2)
   \multiput(0,0)(4,0){2}{\psline[dotsize=0.3,linewidth=0.3,linecolor=lightgray]{*-*}(0,0)(1,1)}
   \put(0,8){\psline[dotsize=0.3,linewidth=0.3,linecolor=lightgray]{*-*}(0,0)(1,-1)}
   \put(8,8){\psline[dotsize=0.3,linewidth=0.3,linecolor=lightgray]{*-*}(0,0)(-1,-1)}
   \put(10,6){\psline[dotsize=0.3,linewidth=0.3,linecolor=lightgray]{*-*}(0,0)(-1,-1)}

   \psline(0,0)(2,2)
   \psline(4,0)(6,2)
   \psline(8,4)(10,6)
   \psline(6,6)(8,8)
   \psline(2,6)(0,8)
   \multiput(0,0)(4,0){2}{\pscircle[fillstyle=solid,fillcolor=white](0,0){.15}}
   \multiput(2,2)(4,0){2}{\pscircle[fillstyle=solid,fillcolor=white](0,0){.15}}
   \multiput(4,4)(4,0){2}{\pscircle[fillstyle=solid,fillcolor=white](0,0){.15}}
   \multiput(2,6)(4,0){3}{\pscircle[fillstyle=solid,fillcolor=white](0,0){.15}}
   \multiput(0,8)(8,0){2}{\pscircle[fillstyle=solid,fillcolor=white](0,0){.15}}

   \psdots[linecolor=blue,dotstyle=+,dotsize=.3](2,0)(6,0)(0,2)(4,2)(8,2)(2,4)(6,4)(10,4)(0,6)(4,6)(8,6)(2,8)(6,8)
   \put(0,0){\pscircle(0,0){.075}}
   \put(4,0){\pscircle(0,0){.075}}
   \put(10,6){\pscircle(0,0){.075}}
   \put(0,8){\pscircle(0,0){.075}}
   \put(8,8){\pscircle(0,0){.075}}

   \put(2,0){\psarc[linecolor=red](0,0){1}{90}{180}}
   \put(2,2){\psarc[linecolor=red](0,0){1}{270}{360}}
   \put(0,2){\psarc[linecolor=red](0,0){1}{270}{360}}
   \put(2,2){\psarc[linecolor=red](0,0){1}{90}{180}}
   \put(2,6){\psarc[linecolor=red](0,0){1}{180}{270}}
   \put(0,6){\psarc[linecolor=red](0,0){1}{0}{90}}
   \put(2,8){\psarc[linecolor=red](0,0){1}{180}{270}}
   \put(2,6){\psarc[linecolor=red](0,0){1}{0}{90}}
   \put(4,2){\psarc[linecolor=red](0,0){1}{270}{360}}
   \put(6,0){\psarc[linecolor=red](0,0){1}{90}{180}}
   \put(6,2){\psarc[linecolor=red](0,0){1}{270}{360}}
   \put(8,4){\psarc[linecolor=red](0,0){1}{270}{360}}
   \put(10,4){\psarc[linecolor=red](0,0){1}{90}{180}}
   \put(8,6){\psarc[linecolor=red](0,0){1}{270}{360}}
   \put(8,6){\psarc[linecolor=red](0,0){1}{90}{180}}
   \put(6,6){\psarc[linecolor=red](0,0){1}{90}{180}}
   \put(6,8){\psarc[linecolor=red](0,0){1}{270}{360}}
  \end{pspicture}
  \caption{(1) 
We can specify the dimer within the vertices 
$V(G') \setminus V(G)$, 
so that 
$M \in {\cal D}(G)$ 
can be regarded as 
$M' \in {\cal D}(G')$. 
(2)
The slit curve shows 
${\cal D}_B(G)$ 
and 
${\cal D}_B(G')$ 
has one to one correspondence.}
  \label{fig:3-5}
 \end{center}
\end{figure}

Moreover, 
we can also see that, 
by looking at the slit curve corresponding to 
Figure 3-4 explicitly, 
${\cal D}_B(G)$ 
and 
${\cal D}_B(G')$ 
has one to one correspondence (Figure \ref{fig:3-5} (2)).
Hence 
the same argument as in (1) also works for 
$G^{(m,n)}$, 
completing the proof of Theorem \ref{LMC}. 
\QED
\end{proof}
%

\section{Estimate for the number of dimer covering and  probability of impurity configuration}
In \cite{NS}, 
we studied one impurity case 
$G^{(1)}$, 
in which case the dimer covering is given by spanning tree and location of the impurity (Corollary \ref{one impurity case}). 
The uniform distribution 
on the set of spanning trees is generated by the loop erased random walk \cite{BP}, 
and 
the number of configurations of impurity is depends only on the length of the tree. 
Thus, 
by using the random walk and the matrix tree theorem, 
we obtained an explicit formula for  
$|{\cal D}(G^{(1)})|$ 
and 
the probability of finding the impurity at given edge. 
These argument 
can not be directly applied to k impurity case 
($G^{(k)}$), 
because 
the number of configurations of impurity is not simply determined by the length of the tree, and 
the mapping (in Theorem \ref{k impurity case}, from the set 
${\cal D}(G)$ 
of dimer coverings to the set of spanning tree) is not surjective. 
Thus 
we can only deduce bounds of them. 
In this section, 
we set 
$G = G^{(k)}$. 
We first 
introduce some notations. 
Let 
\begin{eqnarray*}
{\cal T}(\overline{G}_1)
& := &
\{ \mbox{ spanning tree in } \overline{G}_1 \}
\\
{\cal T}(\overline{G}_1, Q)
& := &
\{ T \in {\cal T}(\overline{G}_1) 
\, | \, 
\exists(T, S, \{e_j \}_{j=1}^k )
\in {\cal F}(G, Q) 
\}
\end{eqnarray*}
be the set of spanning trees of 
$\overline{G_1}$ 
and set of those which corresponds to the dimer covering of 
$G$ 
by the bijection given in Theorem \ref{k impurity case}. 
Moreover 
let 
$N := | V(G) |$, 
and let 
$\overline{A} = \{ \overline{a}_{ij} \}_{i, j=1, \cdots, | G_1 |+1}$
be the 
($R^{|G_1|+1} \times R^{|G_1|+1}$)-matrix given by 
\[
\overline{a}_{ij} := \cases{
\mbox{deg}(i) & ($i=j$) \cr
-1 & ($\exists e=(i,j) \in E(\overline{G_1})$) \cr
0 & (otherwise) \cr}
\]
Let 
$A = \{ a_{ij} \}_{i, j=1, \cdots, | G_1 |}$
be the restriction of 
$\overline{A}$
onto 
$G_1$
and let 
${\bf b} = ( b_j )_{j=1}^{| G_1 |}$, 
${\bf p} := ( p_j )_{j=1}^{| G_1 |}
\in {\bf R}^{| G_1 |}$
be 
\begin{eqnarray*}
b_j &:=& \cases{
1 & ($j$ \mbox{ corresponds to a terminal}) \cr
0 & (otherwise) \cr
}
\\
{\bf p} &:=& A^{-1} {\bf b}.
\end{eqnarray*}
$p_j$
is the probability that the random walk starting at 
$j$ 
hits the root 
$R$ 
through a termimal. 
For 
$j \in V_1$, let 
\[
E_2(j) := \{ e=(x,j) \in E_2 
\, | \, \exists x \in V_2 \}
\]
be the set of edges in 
$E_2$ 
with 
$j$ 
being one of endpoint. 
\begin{theorem}
\label{estimate for k impurity case}
(1)
\[
2^k |{\cal T}(\overline{G_1}, Q)|
\le
| {\cal D}(G) |
\le
| \det A |
\left(
\frac Nk
\right)^k
\]
(2)
For 
$j \in V_1$, 
the probability of having an impurity on 
$j$ 
is estimated by 
\[
{\bf P}(
\mbox{
$\exists$ impurity $\in E_2(j)$})
\le
\frac {| {\cal T}(\overline{G_1}) |
\left(
\frac Nk
\right)^k }
{2^k | {\cal T}(\overline{G_1}, Q) |}
\,
p_j.
\]
\end{theorem}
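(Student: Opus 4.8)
The plan is to treat both parts through the bijection of Theorem~\ref{k impurity case}, which identifies ${\cal D}(G)$ with ${\cal F}(G,Q)$, together with the matrix-tree theorem, which gives $|\det A| = |{\cal T}(\overline{G_1})|$ (deleting the row and column of the root from the Laplacian $\overline{A}$). Since in a triple $(T,S,\{e_j\}_{j=1}^k)\in{\cal F}(G,Q)$ the forest $S$ is determined by $T$, I would regard a dimer covering as a pair consisting of a spanning tree $T\in{\cal T}(\overline{G_1},Q)$ and an impurity configuration $\{e_j\}$, and count coverings by fibering over $T$:
\[
|{\cal D}(G)| = \sum_{T\in{\cal T}(\overline{G_1},Q)} \nu(T),\qquad \nu(T):=\sharp\{\mbox{impurity configurations compatible with }T\}.
\]

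For part (1) both bounds reduce to estimating $\nu(T)$. For the lower bound I would use the fact already exploited in the proof of Theorem~\ref{LMC} (Figure~\ref{fig:3-2}) that each of the $k$ TI-trees of $T$ carries its terminal and that this terminal admits two distinct impurity positions; these choices are independent across the $k$ trees and each yields a valid element of ${\cal F}(G,Q)$, so $\nu(T)\ge 2^k$ and hence $|{\cal D}(G)|\ge 2^k|{\cal T}(\overline{G_1},Q)|$. For the upper bound I would note that the impurity attached to the $i$-th TI-tree can occupy at most $n_i$ positions, where $n_i$ is the number of vertices of that tree; since the TI-trees are vertex-disjoint, $\sum_{i=1}^k n_i \le N$, and AM--GM gives $\nu(T)\le \prod_{i=1}^k n_i \le (N/k)^k$. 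Summing over $T\in{\cal T}(\overline{G_1},Q)\subseteq{\cal T}(\overline{G_1})$ and using $|{\cal T}(\overline{G_1})|=|\det A|$ yields $|{\cal D}(G)|\le |\det A|(N/k)^k$.

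For part (2) the probability under the uniform measure equals $\sharp\{M:\exists\,\mbox{impurity}\in E_2(j)\}/|{\cal D}(G)|$, and for the denominator I reuse $|{\cal D}(G)|\ge 2^k|{\cal T}(\overline{G_1},Q)|$. For the numerator the key observation is that on the $V_1$-side only TI-trees carry impurities, because TO-trees and IO-trees have an even number of vertices and are covered internally. Hence if some impurity lies in $E_2(j)$ then $j$ belongs to a TI-tree, so its path to $R$ in $T$ must exit through the terminal of that tree. Thus every such covering has $T$ in the set of spanning trees whose $j$-to-$R$ branch reaches $R$ through a terminal, and I would identify the cardinality of this set with $p_j\,|{\cal T}(\overline{G_1})|$ using the loop-erased-random-walk description of the uniform spanning tree \cite{BP}: the penultimate vertex of the $j$-to-$R$ branch coincides with the first point where the random walk from $j$ hits $R$, and this is a terminal with probability exactly $p_j=(A^{-1}\mathbf{b})_j$. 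Bounding $\nu(T)\le (N/k)^k$ once more, the numerator is at most $p_j\,|{\cal T}(\overline{G_1})|\,(N/k)^k$, and division by the denominator gives the stated estimate.

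The step I expect to be most delicate is the per-tree bound $\nu(T)\le\prod_i n_i$: I must verify that the admissible positions of the $i$-th impurity are controlled by the size of a single tree, rather than by the product of the two paired trees or by the ambient vertex degree, since this is precisely what makes AM--GM produce the clean factor $(N/k)^k$. The second point requiring care is the identification of the number of spanning trees whose $j$-to-$R$ branch passes through a terminal with $p_j\,|{\cal T}(\overline{G_1})|$; this rests on the random-walk/spanning-tree correspondence and on checking that ``reaching $R$ through a terminal'' is exactly the event encoded by $\mathbf{b}$ and $\mathbf{p}=A^{-1}\mathbf{b}$.
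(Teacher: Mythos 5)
Your proposal is correct and follows essentially the same route as the paper's proof: fibering $|{\cal D}(G)|$ over $T \in {\cal T}(\overline{G_1},Q)$ with the per-tree count $F(T)=\prod_j f_j(T)$ bounded by $2^k \le F(T) \le (N/k)^k$ (the paper states the same bounds $2 \le f_j(T) \le$ length of the $j$-th tree, with your AM--GM step left implicit), the matrix tree theorem giving $|\det A| = |{\cal T}(\overline{G_1})|$, and for (2) the same chain via the event $C$ that $j$ reaches $R$ through a terminal, a crude $(N/k)^k$ bound on the impurity configurations per tree, the identification $\sum_T 1_C/|{\cal T}(\overline{G_1})| = p_j$, and substitution of the lower bound from (1). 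Your explicit justifications of the two steps you flag as delicate (the per-tree bound via tree size, and the random-walk identification of $p_j$ via loop-erased random walks, citing \cite{BP}) are exactly the points the paper asserts without further detail, so no genuinely different argument is involved.
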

\begin{remark}
(1)
We can find a constant 
$C_k$ 
dependint only on 
$k$ 
with 
$| {\cal T}(\overline{G_1}, Q) | 
\ge 
C_k 
| {\cal T}(\overline{G_1}) | $. 
\\
(2)
Since 
$| \det A | \ne 0$, 
$A$ 
does not have zero eigenvalues : 
$d:= d(\sigma (A), 0) > 0$. 
Thus 
letting 
$d(j, \{ T_j \}_{j=1}^k)$ 
be the distance between 
$j$ 
and terminals in 
$G_1$, 
the Combes-Thomas bound implies (e.g.,\cite{Ai})
\[
p_j \le C_G e^{- \frac {d}{16} d(j, \{ T_j \}_{j=1}^k) }.
\]
Typically, 
$d \sim |G|^{-1}$,  
$C_G \sim |G|$.
\end{remark}
\begin{proof}
(1)
Let 
$f_j(T)$ 
be the number of impurity configuration of the 
$j$-th TI-tree composing 
$T \in {\cal T}(\overline{G_1}, Q)$. 
Then by Theorem \ref{k impurity case}, 
\[
F(T) :=
f_1(T)  \cdot f_2(T) \cdots  \cdot f_k(T)
=
 \prod_{j=1}^{k} f_j (T)
\]
is equal to the number of dimer covering of 
$G$ 
corresponding to 
$T \in {\cal T}(\overline{G_1}, Q)$ 
hence 
\begin{equation}
| {\cal D}(G) |
=
\sum_{T \in {\cal T}(\overline{G_1}, Q)}
F(T).
\label{FT}
\end{equation}
Since 
$2 \le f_j (T) \le (\mbox{length of the $j$-th tree})$, 
we have 
$
2^k 
\le
F(T) 
\le 
\left(
\frac Nk
\right)^k
$. 
Substituting it to 
(\ref{FT}) and using the matrix tree theorem yields Theorem \ref{estimate for k impurity case}(1). \\
(2)
If we have an impurity in 
$E_2 (j)$, 
then we find a TI-tree or a $T^l I$-tree containing the vertex 
$j$ 
by which 
$j$ 
is connected to the root. 
Let
$C$ 
be the event given by 
\[
C := \{ 
\mbox{ $j$
is connected to the root through a 
terminal }
\}.
\]
Given 
$T \in {\cal T}(\overline{G_1},Q)$, 
the number of configuration 
$F'(T)$ 
of the rest 
$(k-1)$ 
impurities is bounded from above by 
$F'(T) \le \left(
\frac Nk
\right)^k$. 
Thus 
\begin{eqnarray*}
{\bf P}(\mbox{ $\exists$ 
impurity $\in E_2(j)$ })
& \le &
\frac {| {\cal T}(\overline{G_1}) |}
{| {\cal D}(G) |}
\sum_{T \in {\cal T}(\overline{G_1}, Q)}
F'(T)
\frac {1_C}
{| {\cal T}(\overline{G_1}) |}
\\
& \le &
\frac {| {\cal T}(\overline{G_1}) |\left(
\frac Nk
\right)^k }
{| {\cal D}(G) |}
\sum_{T \in {\cal T}(\overline{G_1})}
\frac {1_C}
{| {\cal T}(\overline{G_1}) |}
\\
&=&
\frac {| {\cal T}(\overline{G_1}) |
\left(
\frac Nk
\right)^k
 }
{| {\cal D}(G) |}
\;
p_j.
\end{eqnarray*}
$1_C$ 
is the indicator function of the event 
$C$. 
It remains to substitute the lower bound for 
$| {\cal D}(G) |$ 
in 
Theorem \ref{estimate for k impurity case}(1).
\QED
\end{proof}
\begin{remark}
It is possible to apply the above argument to 
$G^{(m,n)}$, 
by putting an imginary vertex 
$R$
to 
$G_1$. 
The result is similar.
\end{remark}
%
\section{Appendix : some examples}
In this section, 
we apply the argument in previous sections to some discrete examples. 
\subsection{One dimensional strip}
We consider 
$G = G^{(2k, 1)}$ 
which is the strip of width 
$1$ (Figure \ref{fig:5-1}).
In this subsection 
we compute 
$D(2k) := | {\cal D}(G^{(2k, 1)}) |$ 
by using Theorem \ref{pairing}.  \\
%

\begin{figure}[H]
 \begin{center}
  \begin{pspicture}(12,2)
   \psline(0,0)(7,0) \psline(8,0)(13,0)
   \psline(0,2)(7,2) \psline(8,2)(13,2)
   \multiput(0,0)(2,0){4}{\psline(0,0)(0,2)}
   \multiput(9,0)(2,0){3}{\psline(0,0)(0,2)}
   \multiput(0,0)(4,0){2}{\pscircle[fillcolor=white,fillstyle=solid](0,0){.1}}
   \multiput(9,0)(4,0){2}{\pscircle[fillcolor=white,fillstyle=solid](0,0){.1}}
   \multiput(2,2)(4,0){2}{\pscircle[fillcolor=white,fillstyle=solid](0,0){.1}}
   \pscircle[fillcolor=white,fillstyle=solid](11,2){.1}
   \multiput(2,0)(4,0){2}{\pscircle[fillcolor=black,fillstyle=solid](0,0){.1}}
   \multiput(11,0)(4,0){1}{\pscircle[fillcolor=black,fillstyle=solid](0,0){.1}}
   \multiput(0,2)(4,0){2}{\pscircle[fillcolor=black,fillstyle=solid](0,0){.1}}
   \multiput(9,2)(4,0){2}{\pscircle[fillcolor=black,fillstyle=solid](0,0){.1}}
  \end{pspicture}
  \caption{$G^{(2k, 1)}$}
  \label{fig:5-1}
 \end{center}
\end{figure}

$D(2) = 8$
can be seen explicitly. 
By Theorem \ref{pairing}, 
the length of tree, composing the spanning forest satisfying  the pairing condition, must be less than 
$2 \sqrt{2}$, 
and it must be less than 
$\sqrt{2}$ 
if it lies on the end (Figure \ref{fig:5-2}). 
\\
%

\begin{figure}[H]
 \begin{center}
  \begin{pspicture}(12,2)
   \psline[dotsize=.3,linecolor=lightgray,linewidth=0.3]{*-*}(0,0)(0,2)
   \psline[dotsize=.3,linecolor=lightgray,linewidth=0.3]{*-*}(2,0)(4,0)
   \psline[dotsize=.3,linecolor=lightgray,linewidth=0.3]{*-*}(6,2)(8,2)
   \psline[dotsize=.3,linecolor=lightgray,linewidth=0.3]{*-*}(10,0)(12,0)
   \psline(0,0)(12,0)(12,2)(0,2)(0,0)
   \psline(6,2)(8,0)(10,2)
   \psline(0,0)(2,2)
   \psline[linestyle=dashed](2,0)(4,2)(6,0)
   \psline[linestyle=dashed](10,0)(12,2)
   
   \multiput(0,0)(2,0){6}{\psline(0,0)(0,2)}
   \multiput(0,0)(4,0){4}{\pscircle[fillcolor=white,fillstyle=solid](0,0){.1}}
   \multiput(2,2)(4,0){3}{\pscircle[fillcolor=white,fillstyle=solid](0,0){.1}}
   \multiput(2,0)(4,0){3}{\pscircle[fillcolor=black,fillstyle=solid](0,0){.1}}
   \multiput(0,2)(4,0){4}{\pscircle[fillcolor=black,fillstyle=solid](0,0){.1}}
  \end{pspicture}
  \caption{the length of tree must be less than 
$2 \sqrt{2}$ (inside)
and 
$\sqrt{2}$ (end)}
  \label{fig:5-2}
 \end{center}
\end{figure}
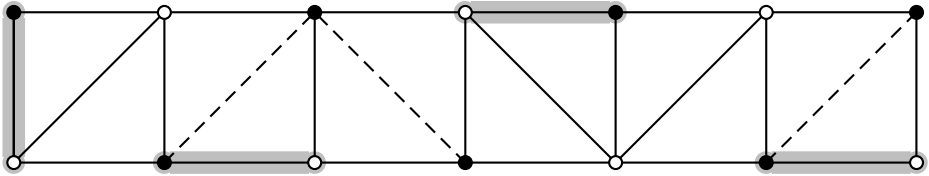

Hence 
if we put the block of two unit cubes 
($G^{(2,1)}$) 
to the left end of 
$G^{(2k,1)}$, 
the tree configuration must be one of the two shown in Figure \ref{fig:5-3}.
\\
%

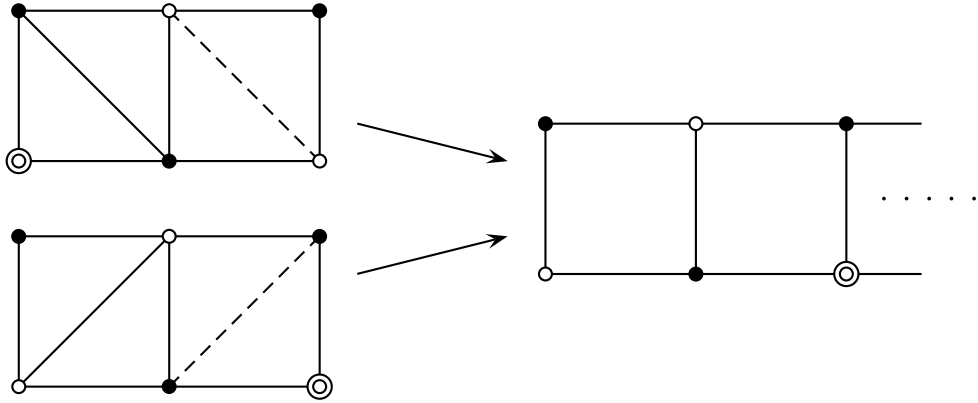
\begin{figure}[H]
 \begin{center}
  \begin{pspicture}(12,5)
   \psline(0,0)(4,0)(4,2)(0,2)(0,0)
   \psline(2,0)(2,2)
   \psline(0,0)(2,2)
   \psline[linestyle=dashed](2,0)(4,2)
   \pscircle[fillstyle=solid,fillcolor=white](0,0){.1}
   \pscircle[fillstyle=solid,fillcolor=white](2,2){.1}
   \pscircle[fillstyle=solid,fillcolor=white](4,0){.175}
   \pscircle[fillstyle=solid,fillcolor=white](4,0){.1}
   \psdots[dotsize=.2](0,2)(2,0)(4,2)
   
   \put(0,3){
   \psline(0,0)(4,0)(4,2)(0,2)(0,0)
   \psline(2,0)(2,2)
   \psline(0,2)(2,0)
   \psline[linestyle=dashed](2,2)(4,0)
   \pscircle[fillstyle=solid,fillcolor=white](0,0){.175}
   \pscircle[fillstyle=solid,fillcolor=white](0,0){.1}
   \pscircle[fillstyle=solid,fillcolor=white](2,2){.1}
   \pscircle[fillstyle=solid,fillcolor=white](4,0){.1}
   \psdots[dotsize=.2](0,2)(2,0)(4,2)
   }

   \put(7,1.5){
   \psline(0,0)(5,0)
   \psline(5,2)(0,2)(0,0)
   \psline(4,0)(4,2)
   \psline(2,0)(2,2)
   \pscircle[fillstyle=solid,fillcolor=white](0,0){.1}
   \pscircle[fillstyle=solid,fillcolor=white](2,2){.1}
   \pscircle[fillstyle=solid,fillcolor=white](4,0){.175}
   \pscircle[fillstyle=solid,fillcolor=white](4,0){.1}
   \psdots[dotsize=.2](0,2)(2,0)(4,2)
   \multiput(4.5,1)(0.3,0){5}{\psdot[dotsize=.05](0,0)}
   }

   \psline[arrowsize=.2]{->}(4.5,1.5)(6.5,2.)
   \psline[arrowsize=.2]{->}(4.5,3.5)(6.5,3.)
  \end{pspicture}
  \caption{The tree configuration of 
$G^{(2k+2, 1)}$ 
is given by putting one of two $G^{(2,1)}$'s to the left end of 
$G^{(2k, 1)}$. }
  \label{fig:5-3}
 \end{center}
\end{figure}

Therefore, 
taking possible configuraton of impurities into consideration, we have 
$D(2k+2)=2 \cdot 2 \cdot \frac 32 \cdot D(2k) = 6 D(2k)$ 
so that 
\[
D(2k) = 8 \cdot 6^{k-1}.
\]
It is also possible to deduce the same result by the transfer-matrix approach \cite{NS2}. 
\subsection{Rectangle}
We consider 
$G^{(1)}$ 
when it is the 
$m \times n$ 
rectangle (Figure \ref{fig:5-4}).
\\

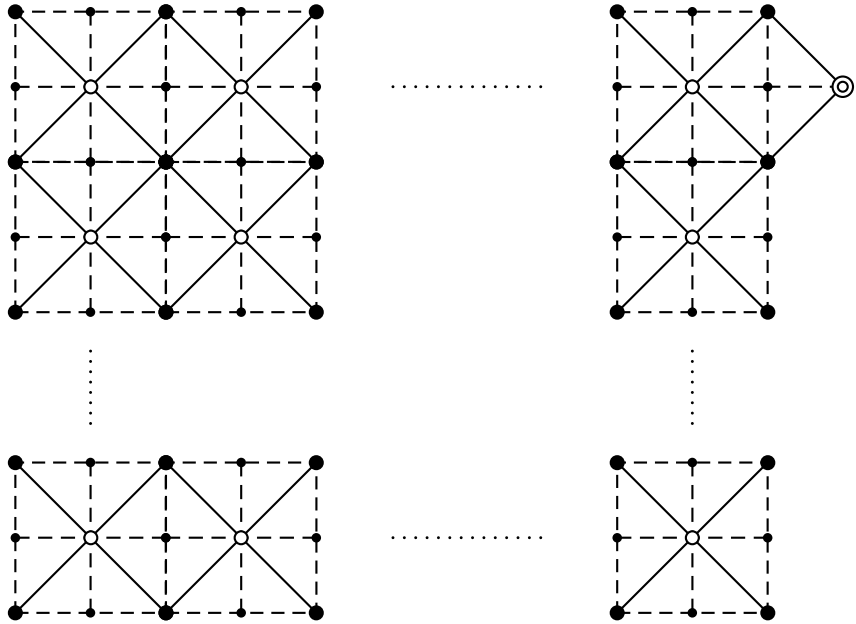
\begin{figure}[H]
 \begin{center}
  \begin{pspicture}(11,8)(0,0)
   \psset{unit=2}
   \multiput(0,0)(1,0){2}{
   \psline[linestyle=dashed](0,0)(1,0)(1,1)(0,1)(0,0)
   \psline[linestyle=dashed](.5,0)(.5,1)
   \psline[linestyle=dashed](0,.5)(1,.5)
   \psline(0,0)(1,1)\psline(0,1)(1,0)
   \put(.5,.5){\pscircle[fillstyle=solid,fillcolor=white](0,0){.05}}
   \put(0,0){\pscircle[fillstyle=solid,fillcolor=black](0,0){.05}}
   \put(1,0){\pscircle[fillstyle=solid,fillcolor=black](0,0){.05}}
   \put(1,1){\pscircle[fillstyle=solid,fillcolor=black](0,0){.05}}
   \put(0,1){\pscircle[fillstyle=solid,fillcolor=black](0,0){.05}}
   \psdots(0.5,0)(1,0.5)(0.5,1)(0,0.5)
   }
   \put(4,0){
   \psline[linestyle=dashed](0,0)(1,0)(1,1)(0,1)(0,0)
   \psline[linestyle=dashed](.5,0)(.5,1)
   \psline[linestyle=dashed](0,.5)(1,.5)
   \psline(0,0)(1,1)\psline(0,1)(1,0)
   \put(.5,.5){\pscircle[fillstyle=solid,fillcolor=white](0,0){.05}}
   \put(0,0){\pscircle[fillstyle=solid,fillcolor=black](0,0){.05}}
   \put(1,0){\pscircle[fillstyle=solid,fillcolor=black](0,0){.05}}
   \put(1,1){\pscircle[fillstyle=solid,fillcolor=black](0,0){.05}}
   \put(0,1){\pscircle[fillstyle=solid,fillcolor=black](0,0){.05}}
   \psdots(0.5,0)(1,0.5)(0.5,1)(0,0.5)
   }
   \multiput(0,2)(0,1){2}{
   \multiput(0,0)(1,0){2}{
   \psline[linestyle=dashed](0,0)(1,0)(1,1)(0,1)(0,0)
   \psline[linestyle=dashed](.5,0)(.5,1)
   \psline[linestyle=dashed](0,.5)(1,.5)
   \psline(0,0)(1,1)\psline(0,1)(1,0)
   \put(.5,.5){\pscircle[fillstyle=solid,fillcolor=white](0,0){.05}}
   \put(0,0){\pscircle[fillstyle=solid,fillcolor=black](0,0){.05}}
   \put(1,0){\pscircle[fillstyle=solid,fillcolor=black](0,0){.05}}
   \put(1,1){\pscircle[fillstyle=solid,fillcolor=black](0,0){.05}}
   \put(0,1){\pscircle[fillstyle=solid,fillcolor=black](0,0){.05}}
   \psdots(0.5,0)(1,0.5)(0.5,1)(0,0.5)
   }
   }

   \multiput(4,2)(0,1){2}{
   \psline[linestyle=dashed](0,0)(1,0)(1,1)(0,1)(0,0)
   \psline[linestyle=dashed](.5,0)(.5,1)
   \psline[linestyle=dashed](0,.5)(1,.5)
   \psline(0,0)(1,1)\psline(0,1)(1,0)
   \put(.5,.5){\pscircle[fillstyle=solid,fillcolor=white](0,0){.05}}
   \put(0,0){\pscircle[fillstyle=solid,fillcolor=black](0,0){.05}}
   \put(1,0){\pscircle[fillstyle=solid,fillcolor=black](0,0){.05}}
   \put(1,1){\pscircle[fillstyle=solid,fillcolor=black](0,0){.05}}
   \put(0,1){\pscircle[fillstyle=solid,fillcolor=black](0,0){.05}}
   \psdots(0.5,0)(1,0.5)(0.5,1)(0,0.5)
   }

   \psline(5,4)(5.5,3.5)(5,3)
   \psline[linestyle=dashed](5.5,3.5)(5,3.5)
   \pscircle[fillstyle=solid,fillcolor=white](5.5,3.5){.075}
   \pscircle[fillstyle=solid,fillcolor=white](5.5,3.5){.04}

   \psline[linestyle=dotted,linewidth=0.02](2.5,.5)(3.5,.5)
   \psline[linestyle=dotted,linewidth=0.02](.5,1.25)(.5,1.75)
   \psline[linestyle=dotted,linewidth=0.02](2.5,3.5)(3.5,3.5)
   \psline[linestyle=dotted,linewidth=0.02](4.5,1.25)(4.5,1.75)
  \end{pspicture}
  \caption{$m \times n$-rectangle}
  \label{fig:5-4}
 \end{center}
\end{figure}

We embed 
$G$ 
to 
${\bf R}^2$ 
so that 
$V_1:=V(G) \cap {\cal V}_1
=
\{ (x,y) \in {\bf Z}^2 
\, : \, 
x=1, 2, \cdots,n, 
\;
y=1, 2, \cdots, m\}$ 
and the coordinate of the root is 
$(n+1, m)$. 
Then the result in \cite{NS} tells us that,
\begin{eqnarray}
&&| {\cal D}(G) | 
=
| \det A | ( 4 \sum_{r \in V_1} p(r) + 2)
\label{rectangle1}
\\
&&{\bf P}(
\mbox{ $\exists$ impurity in $E_2(r)$ })
=
\frac {p(r)}{4 \sum_{r' \in V_1}p(r')+2}, 
\quad
r \in V_1
\label{rectangle2}
\end{eqnarray}
where 
\begin{eqnarray*}
A &:=& I - K, 
\quad
K 
\mbox{ : the incidence matrix of }
V_1, 
\end{eqnarray*}
is the 
${\bf R}^{|V_1|} \times {\bf R}^{|V_1|}$-matrix and 
\begin{eqnarray*}
p(r) &=& \langle {\bf e}_r,  A^{-1} {\bf b} \rangle, 
\quad
r=(r_x, r_y) \in V_1, 
\\
{\bf e}_r &=& \{e_r(x,y)\},
\quad 
{\bf b}=\{b(x,y)\} \in {\bf R}^{| V_1 |}, 
\\
e_r(x,y) &:=& \cases{
1 & $((x,y)=(r_x, r_y))$ \cr
0 & (otherwise) \cr}, 
\quad
b(x,y) := \cases{
1 & $((x,y)=(m,n))$ \cr
0 & (otherwise) \cr}
\end{eqnarray*}
In this case, 
we can diagonalize 
$A$ 
explicitly and its eigenvalues
$\{ e_{kl} \}$ 
and eigenvectors 
$\{ \phi_{kl} \}$ 
are given by
\begin{eqnarray*}
\phi_{kl}(x,y)
&=&
\frac {1}N_{kl} 
\sin(p_{k}x) \cdot \sin(q_{l}y), 
\quad
(x,y) \in V_1, 
\\
e_{kl}
&=&
2 \cos p_{k} + 2 \cos q_{l} + 4,
\quad
k=1, 2, \cdots, n, 
\;
l=1, 2, \cdots, m, 
\end{eqnarray*}
where 
$p_{k} = \frac {k \pi}{n+1}$,
$q_{l} = \frac {l \pi}{m+1}$ 
and 
$N_{kl} := \| \phi_{kl} \|_2$
is the normalization constant. 
$\det A$ 
and 
$p({\bf r})$
are thus equal to 
\begin{eqnarray*}
\det A &=& \prod_{kl} e_{kl}, 
\\
p(r) &=&
\sum_{k,l} 
\frac {1}{e_{kl}}
\phi_{kl}(r_x, r_y) \phi_{kl}(n,m),
\end{eqnarray*}
and we only have to substitute them to 
(\ref{rectangle1}), (\ref{rectangle2}). 
The free energy is 
\[
F := 
\lim_{n, m \to \infty}
\frac {1}{nm} \log | {\cal D}(G^{(1)}) |
=
\frac {1}{\pi^2}
\int \int_{[0,\pi]^2}
\log (4 + 2 \cos x + 2 \cos y) dx dy.
\]
which defer from the case of domino tiling, 
implying that the contribution of impurity is not negligible. 
\\

{\bf Acknowledgement}
The authors would like to thank 
Yusuke Higuchi and Tomoyuki Shirai for discussions and comments. 
%

%
\small

\end{document}